\def\XXint#1#2#3{{\setbox0=\hbox{$#1{#2#3}{\int}$ }
\vcenter{\hbox{$#2#3$ }}\kern-.6\wd0}}
\numberwithin{equation}{section}
\theoremstyle{plain}
\newtheorem*{theorem*}{Theorem}
\newtheorem*{lemma*}{Lemma}
\newtheorem{theorem}{Theorem}
\newtheorem{lemma}{Lemma}[section]
\newtheorem{corollary}[lemma]{Corollary}
\theoremstyle{definition}
\newtheorem{definition}[lemma]{Definition}
\def\tr{\operatorname{tr}}
\newcommand{\bbN}{{\mathbb{N}}}
\newcommand{\bbR}{{\mathbb{R}}}
\newcommand{\bbT}{{\mathbb{T}}}
\newcommand{\eps}{\varepsilon}
\newcommand{\del}{\delta}
\newcommand{\beq}{\begin{equation}}
\newcommand{\eeq}{\end{equation}}
\newcommand{\bal}{\begin{align}}
\newcommand{\eal}{\end{align}}
\newcommand{\bals}{\begin{align*}}
\newcommand{\eals}{\end{align*}}
\newcommand{\lb}{\label}
\begin{document}
\title{Universal Mixers in All Dimensions}
\author{Tarek M. Elgindi and Andrej Zlato\v{s}}
\footnotetext[1]{Department of Mathematics, UC San Diego. E-mail: telgindi@ucsd.edu.}
\footnotetext[2]{Department of Mathematics, UC San Diego. E-mail: zlatos@ucsd.edu.}
\date{\today}
\maketitle

\begin{abstract}
We construct universal mixers, incompressible flows that mix arbitrarily well general solutions to the corresponding transport equation,  in all dimensions.  This mixing is exponential in time (i.e., essentially optimal) for any initial condition with at least some regularity, and we also show that a uniform mixing rate for all initial conditions cannot be achieved.  The flows are time periodic and uniformly-in-time bounded in spaces $W^{s,p}$ for a range of $(s,p)$ that includes points with $s>1$ and $p>2$.  
%They are also time-periodic, and their action is related to  the Smale Horseshoe map.
% with the corresponding flow maps being versions of Baker's map.
\end{abstract}

%\tableofcontents

\section{Introduction}

The problem of mixing via incompressible flows is classical 
%fluid mechanics problem which is 
and rich with connections to several branches of analysis including PDE, geometric measure theory, ergodic theory, and topological dynamics. 
When diffusion is absent or negligible over the relevant time scales, one can model the process of mixing by the transport equation
\beq \lb{1.1}
\rho_t+u\cdot\nabla \rho = 0,
\eeq
with a fluid velocity $u:Q_d\times\bbR^+\to \bbR^d$ and $Q_d$ being some $d$-dimensional physical domain.  Incompressibility of the advecting fluid requires $u$ to be divergence-free and we also assume  the no-flow boundary condition for $u$, that is, the fluid does not cross the boundary of the domain and satisfies $u\cdot n=0$ on $\partial Q_d\times\bbR^+$.  Since we are interested in the study of mixing in the bulk of the domain and not in effects of  rough boundaries, we will simply assume that $Q_d:=(0,1)^d$ is either the unit cube in $\bbR^d$ or it is $\bbT^d$ (in the latter case opposite sides of $(0,1)^d$ are identified, so $\partial\bbT^d=\emptyset$ and the boundary conditions instead become periodic).

The function $\rho:Q_d\times\bbR^+\to\bbR$ represents the concentration of the mixed quantity with a given initial value $\rho(\cdot,0)$, which we can allow to be negative on account of \eqref{1.1} being invariant with respect to addition of constants.  It will be convenient to take $\rho$ to be mean-zero, so we will always assume that $\int_{Q_d} \rho(x,0)dx=0$.  A special case is when $\rho(\cdot,0)$ is the characteristic function of a subset  of $Q_d$ (minus a constant), modeling the mixing of two fluids.  
 The central question now is how well is it possible to mix a given initial condition via divergence-free flows satisfying some physically relevant constraints, which flows are most efficient at this process, and what is their dependence on the initial condition.

%The basic problem of mixing  via incompressible flows is that we assume a container is filled with a fluid with variable concentration of a certain substance at time $t=0$. For time $t>0$, we wish to begin stirring the fluid in a certain way so that the concentration of the substance with which we are concerned becomes more and more homogeneous.  We assume that the fluid is incompressible so the velocity field, $u(x,t),$ induced by the stirring must be divergence free (by the Liouville theorem \cite{}). Let us now assume that the concentration of the substance with which we are concerned at point $x$ in the container and time $t$ is called $f(x,t).$ Now our model is just that $f$ is transported by $u$:
%\[\partial_t f+u\cdot\nabla f=0\] along with the constraint that \[\div(u)=0.\] We should also add the constraint that $u\cdot n=0$ on the boundary of the container where $n$ is the outer normal at the boundary, since the fluid remains inside the container. 

In order to study mixing efficiency of flows, one needs to start with a quantitative definition of how well mixed the advected scalar $\rho$ is at any given time (see the review \cite{T} for various options).  While diffusive mixing results in the time-decay of $L^p$ norms of $\rho$ \cite{CKRZ, DT, STD, TDG} (recall that $\rho$ is mean zero), these norms are all conserved for \eqref{1.1}.  Instead, we  need to use measures of mixing that capture the small-scale variations of $\rho$.  The following two definitions, one geometric in flavor and the other functional-analytic, have been in use recently (see also the discussion at the end of this introduction for a relation to the dynamical systems point of view).
%In them, $\fint_A fdxdy=|A\cap {\rm Dom(f)}|^{-1}\int_{A\cap {\rm Dom(f)}} fdxdy$ is the average of the function $f:{\rm Dom(f)}\to\bbR$ over  a set $A$.
In them, when $Q_d\subseteq\bbR^d$ (i.e., not $\bbT^d$), then we extend the function $f:Q_d\to\bbR$ on the rest of $\bbR^d$ by zero; and as always $\fint_A fdxdy=|A|^{-1}\int_{A} fdxdy$.
% is the average of the function $f:{\rm Dom(f)}\to\bbR$ over  a set $A$.

\begin{definition} \lb{D.1.1}
Let $f \in L^\infty(Q_d)$ be mean-zero on $Q_d$.
\begin{enumerate}[(i)]
\item   We say that $f$ is \emph{$\kappa$-mixed to scale $\eps$}, with $\kappa,\eps\in(0,1)$, if for each $y\in Q_d$,
% and $\eps'\in[\eps,\tfrac 12]$, 
\[
\left|\fint_{B_\eps(y)} f(x) dx\right| \leq \kappa \|f\|_{\infty}.
\]
The smallest such $\eps$ is the ($\kappa$-dependent) {\it geometric mixing scale} of $f$.

\item  The {\it functional mixing scale} of $f$ is $\|f\|_{\dot H^{-1/2}}^2\|f\|_{\infty}^{-2}$.
\end{enumerate}
\end{definition}

{\it Remarks.}
1.
The definition in (i) is from \cite{YaoZla}, which was in turn motivated by \cite{Bressan}, where the special case $d=2$, $\kappa= \tfrac 13$, $f(Q_2)=\{-1,1\}$ was considered.  In it, the ``worst-mixed'' region determines the mixing scale, but deviations of size roughly $\kappa$ are tolerated.
\smallskip

2.
The definition in (ii), which is motivated by \cite{MMP} (where $\bbT^d$ was considered instead of $Q_d$ but their analysis easily extend to $Q_d$), does not tolerate deviations but averages the degree of ``mixedness'' of $f$ over all of $Q_d$.  To see the latter, we note that (30) in \cite{MMP} shows for mean-zero functions equivalence of the $\dot H^{-1/2}$-norm and the {\it mix-norm} 
%  The mix-norm is defined by
\beq \lb{1.2}
\Phi(f):= \left[  \int_{Q_d\times(0,1)} \left( \fint_{B_r(y)} f(x) dx \right)^2 dy dr \right]^{1/2}.
\eeq  
\smallskip

3.
Other $\dot H^{-s}$-norms of $f$ have been used to quantify mixing, particularly the $\dot H^{-1}$-norm \cite{ACM2,IKX, LLNMD, LTD, S}, with the functional mixing scale being $\|f\|_{\dot H^{-1}}\|f\|_{\infty}^{-1}$ (Wasserstein distance of $f_+$ and $f_-$  has also been used \cite{BOS, OSS, S, S2}).  The latter may sometimes be more convenient than the $\dot H^{-1/2}$-norm and also is directly related to mixing-enhanced diffusion rates when diffusion is present \cite{CKRZ,CZDE}, but it lacks the useful connection to the mix-norm.  We use the $\dot H^{-1/2}$-norm here but note that  our mixing results for it also hold for the $\dot H^{-1}$-norm because the former controls the latter.  The only case when this is not obvious is non-existence of a universal mixing rate in Theorem \ref{T.1.1}(ii), but that proof can be easily adjusted to accommodate the $\dot H^{-1}$-norm (see Remark~2 after Lemma \ref{L.2.3}).
\smallskip

%Let $\rho_0 \in L^\infty(Q)$ be mean-zero on $Q$ and let $\kappa\in [0,1]$ and $\eps\in(0,\tfrac 12]$.  
If $\rho$ solves \eqref{1.1} with mean-zero $\rho(\cdot,0) \in L^\infty(Q_d)$ and  divergence free $u:Q_d \times \bbR^+ \to \mathbb{R}^d$, we will say that the flow $\kappa$-mixes  $\rho(\cdot,0)$ to scale $\eps$ by time $t$ whenever $\rho(\cdot, t)$ is $\kappa$-mixed to scale $\eps$. 
In \cite{Bressan,B2}, Bressan conjectured that  if   $\rho(\cdot,0)=\chi_{(0,1/2)\times(0,1)} - \chi_{(1/2,1)\times(0,1)}$ on $Q_2$ is $\tfrac 13$-mixed to some scale $\eps\ll 1$ in time $t$ by a divergence-free $u$, then
\[
\int_0^t \|\nabla u(\cdot, \tau)\|_1 d\tau \ge C |\log\eps|
\]
(with some $\eps$-independent $C<\infty$).  After an appropriate change of the time variable, this is equivalent to
%, as in the proof of Theorems \ref{T.1.1}--\ref{T.1.3} below), 
existence of $C<\infty$ such that if a divergence-free $u$ satisfies
\beq \lb{1.3}
\sup_{t>0} \|\nabla u(\cdot, t)\|_1 \le 1
\eeq
and it $\tfrac 13$-mixes $\rho(\cdot, 0)$ to some scale $\eps\ll 1$ in time $t$, then $t\ge C|\log\eps|$.  
%One should think of  $\|\nabla u(\cdot, t)\|_1$ as an instantaneous cost of the mixing at time $t$, and  as we note in Remark 2 after Corollary \ref{C.1.5} below, the  critical order of derivatives of $u$ here is indeed 1. 
That is, given the constraint \eqref{1.3}, the mixing scale cannot decrease super-exponentially in time (in two dimensions).

This {\it rearrangement cost conjecture} of Bressan is still open, but Crippa and De Lellis proved its version with any $\kappa>0$ and
%the $L^1$-norm of $\nabla u$ replaced by the $L^p$-norm, 
\eqref{1.3} replaced by
\beq\lb{1.4}
\sup_{t>0} \|\nabla u(\cdot,t)\|_p \le 1 
%\qquad\text{for each $p\in[1,2)$}
%\sup_{t>0, p\in[1,2)} (2-p)^{1/p} \|\nabla u(\cdot,t)\|_p \le 1
\eeq
with arbitrary $p> 1$  \cite{CL}.
%  (for $p=2$ this is the {\it bounded enstrophy} case), 
This result also extends to the $\dot H^{-s}$-norm-based definition of mixing via the relationship of the $\dot H^{-1/2}$-norm and the mix-norm \cite{IKX, S}.  

This  motivates the natural question of whether for any mean zero  $\rho(\cdot,0)\in L^\infty(Q_2)$ in two dimensions, there is a flow that satisfies \eqref{1.4} and achieves the (qualitatively optimal for $p>1$) exponential-in-time decay of the mixing scale of the solution $\rho$.  This was recently answered in the affirmative by Yao and Zlato\v s \cite{YaoZla} for any such $\rho(\cdot,0)$ and all $p\in[1,\frac {3+\sqrt 5}2)$, while for $p\ge \frac {3+\sqrt 5}2$ they proved existence of flows achieving mixing scale rates  $O(e^{-t^{\nu_p}/C_p})$ with $\nu_p>\frac 34$ (geometric only).  Alberti, Crippa, and Mazzucato \cite{ACM, ACM2} independently showed that exponential decay can be obtained for all $p\ge 1$, but their result only applies to  special initial data $\rho(\cdot,0)$ (characteristic functions of some regular sets, up to constants). 

In both these works, the mixing flows intricately depend on the initial data, which may not always be very practical.  It is therefore natural to ask whether this dependence can be removed and universally mixing flows exist.  This was done in questions (C) and (D) of the following list of five fundamental questions  from \cite{ACM2} concerning mixing by flows.
%\smallskip

\begin{enumerate}[(A)]
\item Given any mean-zero $\rho(\cdot,0)\in L^\infty(Q_2)$, is there a divergence-free $u$ satisfying \eqref{1.4} such that  the mixing scale (functional or geometric) of the solution $\rho$ decays to zero as $t\to\infty$?

\item If the answer to (A) is affirmative, can $u$ be chosen so that the mixing scale decay is exponential?

\item Is there a divergence-free $u$ on $Q_2$ satisfying \eqref{1.4} such that the mixing scale (functional or geometric) of the solution $\rho$ decays to zero as $t\to\infty$ for any mean-zero $\rho(\cdot,0)\in L^\infty(Q_2)$?

\item If the answer to (A) is affirmative, can $u$ be chosen so that the mixing scale decay is exponential?

\item Is there $s>1$, a mean-zero $\rho(\cdot,0)\in L^\infty(Q_2)$, and a divergence-free $u$ satisfying 
\beq\lb{1.5}
\sup_{t>0} \| u(\cdot,t)\|_{W^{s,p}} \le 1 
%\qquad\text{for each $p\in[1,2)$}
%\sup_{t>0, p\in[1,2)} (2-p)^{1/p} \|\nabla u(\cdot,t)\|_p \le 1
\eeq
such that the mixing scale (functional or geometric) of the solution $\rho$ decays to zero exponentially as $t\to\infty$?
\end{enumerate}

\noindent 
Additionally, it is natural to state the universal mixing version of (E):
%\smallskip

\begin{enumerate}[(F)]
\item If the answer to (E) is affirmative, can a single such $u$ be chosen for all mean-zero $\rho(\cdot,0)\in L^\infty(Q_2)$?
\end{enumerate}

\noindent
The flows from \cite{YaoZla, ACM2} are self-similar in nature with exponentially-in-time decreasing turbulent scales, which appears to be a serious obstacle when it comes to (E) and (F).  Moreover, both \cite{YaoZla, ACM2} only consider the two-dimensional case $d=2$ and their constructions do not appear to easily generalize to higher dimensions, so an obvious seventh question is

\begin{enumerate}[(G)]
\item What are the answers to (A)--(F) in dimensions $d\ge 3$?
\end{enumerate}

%\noindent 
The results from \cite{YaoZla} answer (A) and (B) in the affirmative for $p\in[1,\frac {3+\sqrt 5}2)$, as well as (A) in the affirmative for  $p\ge \frac {3+\sqrt 5}2$ and the geometric mixing scale only. (Additionally, \cite{ACM2} proves exponential mixing for all $p\ge 1$ and some special initial data $\rho(\cdot,0)$.)  In this paper we answer (C)--(F) for $p\in[1,\frac {3+\sqrt 5}2)$, as well as the multi-dimensional version (G) of (A)--(F) (except of (B) for very rough $\rho(\cdot,0)$).  For these $p$ and in all dimensions $d\ge 2$, the answers to (C) and (E) are affirmative, while the answers to (D) and (F) are ``no but morally yes''.  More precisely, we construct universally mixing flows in all dimensions that also  yield exponential mixing for all mean-zero initial conditions $\rho(\cdot,0)$ with at least some regularity  (belonging to $H^\sigma(Q_2)$ for some $\sigma>0$, which includes all characteristic functions of regular sets), while we also show that no flow has a fully universal mixing rate (exponential or otherwise).  Moreover, the flows we construct will be periodic in time, with no emergence of small-scale structures as $t\to\infty$.  We also note that all the answers are the same for both the geometric (with any $\kappa>0$) and functional mixing.

The following definition encapsulates our goals.

\begin{definition} \lb{D.1.2}
%Let $ \in L^\infty(Q_d)$ be mean-zero on $Q_d$.
\begin{enumerate}[(i)]
\item   A divergence-free $u:Q_d\times\bbR^+\to\bbR^d$ is called a {\it universal mixer} (in the functional sense or geometric sense for some fixed $\kappa\in(0,1)$) if for any $\rho(\cdot,0)\in L^\infty(Q_d)$ the mixing scale (functional or geometric with the given $\kappa$) of the solution to \eqref{1.1} converges to 0 as $t\to\infty$.  

\item  A universal mixer $u$ (in either sense from (i)) has  {\it mixing rate} $\lambda:\bbR^+\to\bbR^+$ with $\lim_{t\to\infty} \lambda(t)=0$ if for each $\rho(\cdot,0)\in L^\infty(Q_d)$ 
%(and $\kappa\in(0,1)$ if the sense is geometric)  
there is $\tau<\infty$ such that the mixing scale of $\rho(\cdot,t)$ (in the relevant sense)  is at most $\lambda(t-\tau)$ for all $t>\tau$.

\item  A universal mixer $u$ (in either sense from (i)) is a {\it universal exponential  mixer}  if there is $\gamma>0$ such that $u$ has mixing rate $\lambda(t)=e^{-\gamma t}$.
%for each $\rho(\cdot,0)\in L^\infty(Q_d)$ there is $C<\infty$ such that the mixing scale (in the relevant sense) of $\rho(\cdot,t)$ is less than $Ce^{-\gamma t}$ for all $t>0$.

\item  A universal mixer $u$ (in either sense from (i)) is an {\it almost-universal exponential mixer}  if for each $\sigma>0$ there is $\gamma_\sigma>0$ such that for each $\rho(\cdot,0)\in L^\infty(Q_d)\cap H^\sigma(Q_d)$ 
%(and $\kappa\in(0,1)$ if the sense is geometric) 
 there is $\tau<\infty$ such that the mixing scale of $\rho(\cdot,t)$ (in the relevant sense) is at most $e^{-\gamma_\sigma(t-\tau)}$ for all $t>\tau$.

\item  If $u$ is a universal mixer in the geometric sense for each $\kappa\in(0,1)$, we say that $u$ is a {\it universal mixer in the geometric sense}.
If $u$ is an (almost-)universal exponential mixer in the geometric sense for each $\kappa\in(0,1)$, with the relevant exponential mixing rates independent of $\kappa$, then we say that $u$ is an {\it (almost-)universal exponential mixer in the geometric sense}.
\end{enumerate}
\end{definition}

Here are our main results, for $Q_d$ being either the unit cube or the torus $\bbT^d$.

\begin{theorem}\label{T.1.1}
\begin{enumerate}[(i)]
\item
For any $d\ge 2$, there is a divergence-free time-periodic vector field $u$ on $Q_d$ satisfying no-flow (or periodic) boundary conditions such that $u\in L^\infty([0,\infty); W^{s,p}(Q_d))$ (or $u\in L^\infty([0,\infty); W^{s,p}(\bbT^d))$) for any $s<\frac{1+\sqrt 5}2$ and $p\in[1,\frac{2}{2s+1-\sqrt{5}})$, and $u$ is a universal mixer and an almost-universal exponential mixer in both the geometric and functional sense.

\item For any $d\ge 2$, there is no divergence-free universal mixer on $Q_d$ satisfying either no-flow or periodic boundary conditions that has a mixing rate, in either the functional sense or the geometric sense with some $\kappa\in(0,1)$.  In particular, there are no universal exponential mixers in any dimension.
\end{enumerate}
%
%\begin{enumerate}
%\item $u(t,\cdot)$ is time-periodic in time with period $1$.
%\item If $A\subset [-\frac{1}{2},\frac{1}{2}]^2$ is measurable, then the minimal mixing scale of $\Phi_t^{-1}(A)$ converges to $0$ as $t\rightarrow\infty$. 
%\item Every $L^2$ (renormalized) solution of $\partial_t f+u\cdot\nabla f=0$ converges weakly to its mean as $t\rightarrow\infty$. 
%\item There is a universal constant $C>0$ so that for any open set $A\subset [-\frac{1}{2},\frac{1}{2}]^2$ with $\partial A$ a rectifiable Jordan curve, $\Phi_t^{-1}(A)$ is mixed to scale $\epsilon$ for all $t\geq C|\log\epsilon|$ for all $\epsilon<\frac{1}{C|\partial A|}$.  
%\item Let $\Gamma$ be a rectifiable Jordan curve. Assume $f$ is mean-zero on $[-\frac{1}{2},\frac{1}{2}]^2$ and uniformly continuous inside of $\Gamma$ and outside\footnote{That is, $f$ is allowed to have jump discontinuities across $\Gamma$} of $\Gamma.$ Then, there exists a constant $\epsilon_0$ depending only on $|\Gamma|$ and the moduli of continuity of $f_0$ inside and outside $\Gamma$ so that $f\circ\Phi^{-1}_t$ is $\frac{1}{4}$-mixed to scale $\epsilon$ whenever $t>C|\log\epsilon|$ for $\epsilon<\epsilon_0$. As in the above, $C$ is a universal constant independent of $\Gamma$ and $f$. 
%\end{enumerate}
\end{theorem}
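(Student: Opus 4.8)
\emph{Part (i): the positive result.} I would deduce both the universal‑mixer and the almost‑universal exponential‑mixer properties from a single estimate on the time‑$T$ map $\Phi$ of the (to‑be‑constructed) periodic flow: that there is $c>0$ so that for each $\sigma\in(0,c]$ there are $\gamma_\sigma,C_\sigma>0$ with $\|g\circ\Phi^{-n}\|_{\dot H^{-1/2}}\le C_\sigma e^{-\gamma_\sigma n}\|g\|_{H^\sigma}$ for every mean‑zero $g\in H^\sigma(Q_d)$, together with its geometric‑mixing analogue. Granting this, almost‑universal exponential mixing is immediate (for $t\in[nT,(n+1)T)$ one composes with the fixed bi‑Lipschitz, volume‑preserving rearrangement $\Phi_{t-nT}$, which changes either mixing scale by at most a fixed factor), and genuinely universal mixing for an arbitrary mean‑zero $\rho(\cdot,0)\in L^\infty(Q_d)$ follows by density: approximate $\rho(\cdot,0)$ in $L^2$ by smooth mean‑zero $g_m$ and use that $\Phi$ conserves $\|\cdot\|_{L^2}$, that $\|\cdot\|_{\dot H^{-1/2}}\lesssim\|\cdot\|_{L^2}$ on $Q_d$, and that $L^2(Q_d)\hookrightarrow\dot H^{-1/2}(Q_d)$ compactly, so $\limsup_{t\to\infty}\|\rho(\cdot,t)\|_{\dot H^{-1/2}}\le C\|\rho(\cdot,0)-g_m\|_{L^2}\to0$ (the geometric statement the same way from its estimate). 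So the entire weight of (i) is to produce a divergence‑free, time‑periodic $u$ on $Q_d$ (cube or $\bbT^d$, any $d\ge2$), obeying the boundary conditions, with $\sup_t\|u(\cdot,t)\|_{W^{s,p}(Q_d)}<\infty$ in the stated range, and to prove the displayed estimate for its period map.

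For this I would build $\Phi$ from finitely many explicit shear‑type building blocks — alternating shears along the coordinate directions, localized so as to respect the no‑flow (or periodic) condition and tensorized to all $d\ge2$ — arranged so that $\Phi$ is uniformly expanding with a single rate $\mu>1$. The velocity field over one period is then piecewise affine, hence Lipschitz away from a fixed hypersurface, so the $W^{s,p}$ bound holds with room to spare on the Lipschitz scale; the sharp range $s<\tfrac{1+\sqrt5}2$, $p<\tfrac2{2s+1-\sqrt5}$ is what emerges from optimizing the scaling of the building blocks against the expansion requirement, the golden ratio appearing because $\big(\tfrac{1+\sqrt5}2\big)^2=\tfrac{3+\sqrt5}2$, the exponent in the Yao--Zlato\v s bound for $\|\nabla u\|_p$. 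The exponential‑mixing estimate I would prove by tracking how $\Phi^{-1}$ propagates frequencies: a mean‑zero $g$ is driven toward frequencies of magnitude $\sim\mu^n$ away from a measure‑zero stable set, and a splitting into the already‑high and still‑low frequency parts — the latter controlled because the stable foliation carries no low‑frequency resonance — yields the $H^\sigma\to\dot H^{-1/2}$ decay, with the geometric version following via the mix‑norm. I expect this to be the main obstacle: forcing one time‑periodic flow to be simultaneously exponentially mixing and $W^{s,p}$‑bounded with $s>1$ and $p>2$ (so that no small scales emerge as $t\to\infty$), on the cube, and in every dimension.

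\emph{Part (ii): the negative result, functional scale.} Suppose $u$ is a divergence‑free universal mixer on $Q_d$ with volume‑preserving flow maps $\Phi_t$, and, for contradiction, that it has a mixing rate $\lambda$; replacing $\lambda(t)$ by $\sup_{s\ge t}\lambda(s)$ we may take $\lambda$ nonincreasing with $\lambda(t)\to0$. Fix a mean‑zero $P\in L^\infty(Q_d)$ of positive mixing scale and set $\rho(\cdot,0):=\sum_{k\ge1}2^{-k}\,P\circ\Phi_{t_k}$, with $t_1<t_2<\cdots\to\infty$ chosen greedily. Since $u$ is a universal mixer, $\|(P\circ\Phi_{t_i})\circ\Phi_s^{-1}\|_{\dot H^{-1/2}}\to0$ as $s\to\infty$ for each already fixed $i$ (and likewise for $P$ itself), so $t_k$ can be taken so large that all cross‑terms $\|P\circ(\Phi_{t_i}\circ\Phi_{t_k}^{-1})\|_{\dot H^{-1/2}}$ and $\|P\circ(\Phi_{t_k}\circ\Phi_{t_i}^{-1})\|_{\dot H^{-1/2}}$ ($i<k$) are small (say $\le 4^{-k}\|P\|_{\dot H^{-1/2}}$), and also $\lambda(t_k/2)\le\tfrac1{10}4^{-k}\|P\|_{\dot H^{-1/2}}^2\|P\|_\infty^{-2}$. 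Then $\rho(\cdot,0)\in L^\infty$ is mean‑zero, $\rho(\cdot,t_k)=2^{-k}P+\sum_{j\ne k}2^{-j}P\circ(\Phi_{t_j}\circ\Phi_{t_k}^{-1})$, so $\|\rho(\cdot,t_k)\|_{\dot H^{-1/2}}\ge\tfrac12 2^{-k}\|P\|_{\dot H^{-1/2}}$ while $\|\rho(\cdot,t_k)\|_\infty\le\|P\|_\infty$, whence the functional mixing scale of $\rho(\cdot,t_k)$ is $\ge c_k:=\tfrac14 4^{-k}\|P\|_{\dot H^{-1/2}}^2\|P\|_\infty^{-2}$. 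But for any $\tau$, choosing $k$ with $t_k\ge2\tau$ gives $\lambda(t_k-\tau)\le\lambda(t_k/2)<c_k$, contradicting the rate at $t=t_k$. The uniform bookkeeping of the cross‑terms and the simultaneous choice of all parameters is the role of the preparatory lemma.

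\emph{Part (ii): geometric scale, and the $\dot H^{-1}$ variant.} For the geometric scale with a fixed $\kappa\in(0,1)$ the construction is even more elementary and uses only volume preservation: fix small $\eps$, take $E:=\bigcup_k A_k$ with $A_k:=\Phi_{t_k}^{-1}(D_k)\setminus\bigcup_{j<k}A_j$, where $D_k=B_{\delta_k}(y_k)\subset Q_d$ is chosen so that $|D_k\cap\Phi_{t_k}(\bigcup_{j<k}A_j)|\le\eps|D_k|$ — possible for any $\delta_k\le\tfrac1{10}$ because $\bigcup_{j<k}A_j$ has measure $\le\sum_{j<k}|D_j|$, which we keep tiny by letting $|D_k|\to0$ fast, and the average of $|B_\delta(y)\cap S|/|B_\delta|$ over $y$ equals $|S|$ up to boundary effects. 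Then $\rho(\cdot,0):=\chi_E-|E|\in L^\infty$ is mean‑zero and $\fint_{D_k}\rho(\cdot,t_k)\ge1-\eps-|E|$ while $\|\rho(\cdot,t_k)\|_\infty=1-|E|$, so $\fint_{D_k}\rho(\cdot,t_k)>\kappa\|\rho(\cdot,t_k)\|_\infty$ once $\eps,|E|$ are small depending on $\kappa$; thus the geometric mixing scale of $\rho(\cdot,t_k)$ is $\ge\delta_k$. Letting $\delta_k\to0$ only as fast as forced (e.g.\ $\delta_k\sim2^{-k/d}$) and then taking $t_k$ so large that $\lambda(t_k/2)<\delta_k$ gives the same contradiction. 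Finally, since everything in part (ii) uses only conservation of $L^2$ and $L^\infty$, the inequality $\|\cdot\|_{\dot H^{-s}}\lesssim\|\cdot\|_{L^2}$, and the definition of a universal mixer, the $\dot H^{-1}$ version of the non‑existence statement follows by the identical argument.
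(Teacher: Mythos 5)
Your approach here is fundamentally different from the paper's, and as written it has a fatal gap. You propose to build the period map from ``alternating shears along the coordinate directions, localized so as to respect the no-flow (or periodic) condition,'' yielding a ``piecewise affine, hence Lipschitz away from a fixed hypersurface'' velocity field, and then to claim the $W^{s,p}$ bound ``holds with room to spare on the Lipschitz scale.'' None of this survives scrutiny. First, on the cube $(0,1)^d$ a divergence-free shear $u=(f(x_2),0,\dots,0)$ satisfies $u\cdot n=0$ on $\{x_1=0,1\}$ only if $f\equiv 0$; localizing in $x_1$ destroys incompressibility, so shears and no-flow boundary conditions are simply incompatible. Second, on $\bbT^d$ a piecewise-affine shear whose one-period map is hyperbolic (e.g.\ the decomposition of Arnold's cat map) must have a jump discontinuity across a hypersurface, and such a field is in $BV$ but not in $W^{s,p}$ for any $s,p\ge 1$; the paper says this explicitly in the introduction and identifies it as the precise obstruction to be overcome. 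Third, if you instead take continuous shears (e.g.\ sinusoidal, as in the Pierrehumbert flow the paper mentions), the field is regular but a rigorous proof of exponential mixing is, per the paper's own remark, an open problem --- your ``frequency-tracking'' sketch is not a proof and does not supply one. The paper's actual route is quite different: it realizes the \emph{folded} Baker's map $T$ (not the classical one, which resists regular realizations) as the period map of a flow built from $90^\circ$ rotations of (half-)squares with the transcendental stream functions $\psi_\alpha$, $\psi^\alpha$; the $W^{s,p}$ window with $s<\tfrac{1+\sqrt5}{2}$, $p<\tfrac{2}{2s+1-\sqrt5}$ comes out of a delicate weighted estimate on $D^2\psi^\alpha$, $D^3\psi^\alpha$ near $\partial Q_2$ (Lemma 3.1, Lemma 3.3, the appendix), optimized at $\alpha_*=3-\sqrt5$; and exponential mixing of $H^\sigma$ data is proved via dyadic decomposition and the combinatorics of how $T$ distributes dyadic rectangles (Lemmas 2.1, 2.6), not by a frequency argument. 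One further soft spot: you call the intra-period flow maps ``bi-Lipschitz.'' For the paper's (non-Lipschitz) velocity field this is false; only H\"older continuity of the flow maps holds, and proving even that is a genuine step (Lemma 3.5), needed exactly to extend exponential mixing from integer to all times. Finally, your $L^2$-density reduction to the $H^\sigma$ estimate does establish universal mixing in the functional sense (it is a valid alternative to the paper's weak-$L^2$ argument plus Lemma A.1 of \cite{YaoZla}), but the parenthetical ``the geometric statement the same way'' does not follow: an $L^2$-small error contributes $|B_r|^{-1/2}\|e\|_{L^2}$ to a ball-average, which blows up as $r\to 0$, so geometric mixing needs the mix-norm equivalence, not a naive density pass.

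\textbf{Part (ii).} Your geometric construction is morally the same as the paper's (Lemma 2.3), just more elaborate: you build $E$ by accreting disjoint preimages $A_k=\Phi_{t_k}^{-1}(D_k)\setminus\bigcup_{j<k}A_j$ of balls with small overlap, whereas the paper simply notes $V_d\sum_j\lambda_{n_j}^{}\le\tfrac12$ lets one put \emph{all} the preimages $T_{n_j}^{-1}(B_{\lambda_{n_j}^{1/d}}(\tfrac12,\dots,\tfrac12))$ inside a single set $A$ of measure $\tfrac12$, with $f=\chi_A-\chi_{Q_d\setminus A}$, and then deduces the functional claim from the geometric one via the mix-norm \eqref{1.2}. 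Your separate functional construction $\rho(\cdot,0)=\sum_k 2^{-k}P\circ\Phi_{t_k}$, however, has a gap: at time $t_m$ the terms with $k>m$ are $2^{-k}P\circ\Phi_{t_k}\circ\Phi_{t_m}^{-1}$, and the universal-mixer hypothesis controls $\|g\circ\Phi_t^{-1}\|_{\dot H^{-1/2}}$ for $t\to\infty$ with $g$ \emph{fixed}, which handles $k<m$ but says nothing about $\|P\circ\Phi_{t_k}\circ\Phi_{t_m}^{-1}\|_{\dot H^{-1/2}}$ as $t_k\to\infty$ (that would require mixing of the \emph{forward} maps, which is not assumed). Bounding those terms crudely in $L^2$ gives $\sum_{k>m}2^{-k}\|P\|_{L^2}=2^{-m}\|P\|_{L^2}$, which is the \emph{same} order as the main term $2^{-m}\|P\|_{\dot H^{-1/2}}$ and need not be smaller; you would have to replace $2^{-k}$ by a much faster-decaying sequence to salvage the argument. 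The paper avoids this entirely by running only the geometric construction and then passing to $\dot H^{-1/2}$ (and, with a small modification noted in Remark 2 after Lemma 2.3, to $\dot H^{-1}$) through the mix-norm.
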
 

{\it Remarks.}
1.  Hence $(s,p)$ with $s>1$ and $p>2$ are included in (i).  As mentioned above, prior mixing results required two dimensions $d=2$, as well as $u$ that depended on the mixed function and only belonged to spaces $L^\infty([0,\infty); W^{1,p}(Q_2))$.  
%a universal mixer withThe sharpness of our regularity requirement on $u$ is not clear. It is possible that there exists a universal mixer which belonging to $L^\infty([0,\infty); \cap_{p<\infty} W^{1,p});$ however, it seems quite difficult to construct a universal mixer $u$ with $|\nabla u|_{L^\infty}$ uniformly bounded which also \emph{exponentially} mixes all regular sets. 
\smallskip

2.  The one-period flow map of our flow in (i) on $Q_2$ with no-flow boundary conditions is the folded Baker's map, and the other cases are also related to it (see Section \ref{BakersMap}).  Thus it is exponentially mixing in the dynamical systems sense, and topologically conjugate to the Smale Horseshoe map.\smallskip

3. Our $u$ in (i) will in fact only take finitely many (two when $d=2$) distinct values as a function of time.  This is virtually the simplest possible time-dependence, and we do not know whether time-independent universal mixers exist on $Q_d$ (they do not for $d=2$, due to divergence-free vector fields on $Q_2$ having a stream function).
\smallskip
%\item As discussed above, there can be no universal rate in 2 and 3 above. See Lemma \ref{NoUniversalRateLemma}.
%\item Point 5 generalizes naturally when $\Gamma$ is replaced by finitely many curves $\Gamma_i$. 

4.  While the flows we construct in (i) are discontinuous in time, they can be made smooth in time by a simple re-parametrization described in \cite{YaoZla}.  In space, these flows are H\" older continuous, and smooth away from a finite number of hyperplanes.
\smallskip

5. (i) also shows that replacing the $W^{1,p}$-norm of $u$ in the constraint \eqref{1.4} by the $W^{s,p}$-norm for $(s,p)$ as in (i) does not qualitatively improve the exponential upper bound on mixing efficiency of flows (i.e., exponential lower bound on the mixing scale) from \cite{CL,IKX,S}.
\smallskip

6. We also remark that it follows from (i) that mean-zero solutions to the corresponding advection-diffusion equation 
\[
\rho_t+u\cdot\nabla\rho=\nu\Delta\rho
\]
 with $\nu\in(0,\frac{1}{2})$ satisfy 
 \[
 \|\rho(\cdot,t)\|_{2}\leq e^{-c\nu^{0.62}t}\|\rho(\cdot,0)\|_{2}
 \] 
 for all $t>\nu^{-0.62}$, with $c$ a universal constant (see Theorem 4.4 in \cite{CZDE}).  That is, their diffusive time scale is at most $O(\nu^{-0.62})$, which is much shorter than the time scale $O(\nu^{-1})$ for the heat equation (without advection) when $0<\nu\ll 1$.  However, this estimate is still not optimal.
\smallskip

%A natural question is whether ``almost'' can be removed from Theorem \ref{T.1.1}.  The next result answers this in the negative in all dimensions, even if we are willing to accept any mixing rate at all.  That is, the almost-universality of exponential mixing in Theorem~\ref{T.1.1} is essentially sharp.
%
%\begin{theorem}\label{T.1.2}
%For any $d\ge 2$, there is no divergence-free universal mixer on $Q_d$ satisfying either no-flow or periodic boundary conditions that has a mixing rate, in either the functional sense or the geometric sense with some $\kappa\in(0,1)$.  In particular, there are no universal exponential mixers in any dimension.
%\end{theorem} 

We do not know whether universal mixers or almost universal exponential mixers with a higher regularity than ours exist.  One possible candidate for an efficient universal mixer in two dimensions (on $\bbT^2$) is almost every realization of the random vector field taking values $(\sin(2\pi x_2+\omega_n),0)$ and $(0,b\sin(2\pi x_1+\omega_n))$ on time intervals $(n-1,n-\frac 12]$ and $(n-\frac 12, n]$, respectively, for $n\in \bbN$ and with $\omega_n$ independent random variables uniformly distributed over $\bbT$.
While these alternatively horizontal and vertical shear flows, considered by Pierrehumbert in \cite{Pie}, appear to have very good mixing properties, we are not aware of their rigorous proofs.

\bigskip
{\bf Discussion of related dynamical systems results.} 
The study of mixing maps and flows has a rich history. While there are a plethora of \emph{maps} that are known to be good mixers, there are only a few examples of \emph{flows} that mix well (see \cite{CFS}). One important class of examples are Anosov flows, introduced in \cite{Anosov}. A flow $\Phi_t:M\rightarrow M$ on a  compact Riemannian manifold $M$ is an Anosov flow if at each $x\in M$, the tangent space $TM_x$ can be decomposed into three subspaces, one contracting, one expanding, and one that is 1-dimensional and corresponds to the direction of the flow.  It was shown in important works of Dolgopyat \cite{Dolgopyat} and Liverani \cite{Liverani} that all smooth enough Anosov flows are exponentially mixing in the sense of the decay of correlations (which implies exponential mixing in the sense of Definition \ref{D.1.1}). Anosov flows are known to exist in a number of settings, the most concrete of which seems to be as geodesic flows on certain negatively curved Riemannian manifolds of dimensions $d\ge 3$. 
A very interesting open problem is to construct an incompressible velocity field in the flat geometry of $\mathbb{T}^d$ for $d\geq 3$ that is smooth uniformly in time, and whose flow is an exponential mixer.  Theorem \ref{T.1.1}(i) provides a H\" older continuous time-periodic example on both $\bbT^d$ and $(0,1)^d$ for $d\ge 2$.
 (We note  that A. Katok constructed mixing flows on all smooth two dimensional manifolds \cite{K}, but their mixing rates are only logarithmic or algebraic in time, depending on the allowed regularity of the flows \cite{Dolgopyat2}.) 

It is also easy to show that many linked twist maps lead to exponential mixers on  $\mathbb{T}^2$ (see \cite{SpringhamThesis}). For example, Arnold's cat map can be realized as the composition of maps $(x,y)\mapsto (x+y,y)$ and $(x,y)\mapsto  (x,x+y)$, which are both flow maps of divergence-free velocity fields (exponential mixing in this case is not difficult to establish).  However, these velocity fields are discontinuous on $\mathbb{T}^2$, and although they belong to the space $BV,$ they do not belong to $W^{s,p}$ for any $s,p\geq 1$. To the best of our knowledge, prior to this work  there were no known exponential mixers on $\mathbb{T}^d$ or $(0,1)^d$ (for any $d\geq 2$) with regularity higher than BV (see \cite{DKK} for even more irregular examples).  Allowing the velocity fields to be discontinuous also trivially allows for flow maps with rigid and "cut-and-paste" motions, which can be easily combined to produce exponential mixers.  The requirement of continuity, let alone the constraint \eqref{1.5} when $s,p>1$, adds non-trivial difficulties.   

It is also important to emphasize that none of the above examples, whether on negatively curved manifolds or even on $\bbT^d$, answers questions about mixing in physically relevant real world settings, that is, bounded domains in $\bbR^2$ and $\bbR^3$.  Theorem \ref{T.1.1}(i) and the (initial-data-dependent) two-dimensional flows from \cite{YaoZla} are the only exponentially mixing flows in this setting that we are currently aware of.

\bigskip
{\bf Acknowledgements.}  AZ acknowledges partial support by NSF grants DMS-1652284 and DMS-1656269. TME acknowledges partial support by NSF grant DMS-1817134. We also thank Dmitri Dolgopyat for pointing us to \cite{K,DKK}, Gautam Iyer for mentioning to us  \cite{Pie}, and Amir Mohammadi and  Sheldon Newhouse for useful discussions.

%Viscous...
%
%Euler/NS with forcing...
%
%Recent Crippa paper, other refs?
%
%Remove Lemma 2.7(i,ii)?

\section{Discrete Time Mixing}\label{BakersMap}

Since our flow will be time-periodic, a crucial step will be the analysis of its flow map at integer multiples of its period.  Of course, this is just the sequence of powers of the flow map for a single period.  We note that Definition \ref{D.1.2} naturally extends to the case of the initial datum $\rho(\cdot,0)$ and the solution $\rho$ being replaced by, respectively, $f\in L^\infty(Q_d)$ and the sequence of functions $\{f\circ T^{-n}\}_{n\ge 1}$ obtained by repeatedly applying some measure-preserving bijection $T:Q_d\to Q_d$ to $f$.  Or, more generally, with  $\{f\circ T_{n}^{-1}\}_{n\ge 1}$ instead of $\{f\circ T^{-n}\}_{n\ge 1}$, where $T_n:Q_d\to Q_d$ is some sequence of measure-preserving bijections.  In those cases we will call $T$ resp. $\{T_{n}\}_{n\ge 1}$ (almost-)universal (exponential) mixers on $Q_d$.

 We start with the two-dimensional case, in which we will use the notation $(x,y)\in Q_2$, and afterwards extend our analysis to all dimensions.  We note that the maps considered here will only work as flow maps in the no-flow case $Q_d=(0,1)^d$.  Adjustments needed for the periodic case $\bbT^d$ will be performed in the next section.

\subsection{Construction of a discrete time universal mixer $T:Q_2\to Q_2$}

There are various maps on the square $Q_2=(0,1)^2$ that are known to have good mixing properties,  one such example being {\it Baker's map}.  The classical Baker's map $B:Q_2\to Q_2$ is obtained by cutting the square vertically in two halves and mapping these affinely onto the upper and lower halves of $Q_2$, with no rotation.  Unfortunately, it seems that divergence-free velocity fields whose flow map at some time is $B$ are no more regular than BV.    It turns out, however, that a closely related map, for which one of the rectangles is rotated by $180^\circ$ (the {\it folded Baker's map} in Figure \ref{fig1} below), has very similar mixing properties and can be realized via incompressible flows with higher regularity.  We analyze it now, and will construct the relevant velocity field in the next section.

\begin{definition} \label{D.2.1}
Let $Q_2:=(0,1)^2$, and 
\[
Q_2':=Q_2\setminus \{(x,y)\in Q_2\,|\, \text{$2^k x\in\mathbb Z$ or $2^k y\in\mathbb Z$ for some $k\in\mathbb N$}\}.
\]
Define $T: Q_2'\rightarrow Q_2'$ by 
\beq\lb{2.0}
T(x,y)=\begin{cases} 
     - (2x, \frac{y}{2}) + (1,\frac{1}{2}) & x\in(0,\frac{1}{2}), \\
     (2x, \frac{y}{2}) + (-1,\frac{1}{2}) & x\in (\frac{1}{2},1).   \end{cases}
\eeq
%\[ 
%T(x,y)=\begin{cases} 
%      -(2x+\frac{1}{2}, \frac{y}{2}+\frac{1}{4}) & x\in(-\frac{1}{2},0), \\
%     (2x-\frac{1}{2}, \frac{y}{2}+\frac{1}{4}) & x\in (0, \frac{1}{2}).   \end{cases}
%\]
For $k\in\mathbb{N}\cup\{0\}$ and $j\in \mathbb{Z}\cap[0,2^{k})$, let
$$H^j_k:= \left[ \left(0,1 \right)\times \left(\frac j{2^{k}}, \frac {j+1}{2^{k}} \right) \right] \cap Q_2',$$
$$V^j_k:= \left[ \left(\frac j{2^{k}}, \frac {j+1}{2^{k}} \right)\times \left(0,1 \right) \right] \cap Q_2'$$
%Finally, for $k\in\mathbb{N}$ and $j\in \mathbb{Z}\cap[-2^{k-1},2^{k-1})$, let
%$$H^j_k:= \left[ \left(-\frac{1}{2},\frac{1}{2} \right)\times \left(\frac j{2^{k}}, \frac {j+1}{2^{k}} \right) \right] \cap Q_2',$$
%$$V^j_k:= \left[ \left(\frac j{2^{k}}, \frac {j+1}{2^{k}} \right)\times \left(-\frac{1}{2},\frac{1}{2} \right) \right] \cap Q_2'$$
be the {\it horizontal and vertical dyadic strips of width $2^{-k}$}, respectively.   Finally, let 
\[
G_{k,k'}:= \left\{H^j_k\cap V^{j'}_{k'} \,\Big|\, j\in \mathbb{Z}\cap[0,2^{k}) \text{ and } j'\in \mathbb{Z}\cap[0,2^{k'}) \right\}
\]
 be the collection of all  {\it dyadic rectangles} of size $2^{-k'}\times 2^{-k}$ and let $G_k:=G_{k,k}$  be the collection of all  {\it dyadic squares} of size $2^{-k}\times 2^{-k}$.
\end{definition}

\begin{figure}[htbp]
%\hspace{2cm}
\centerline{\includegraphics[scale=0.6]{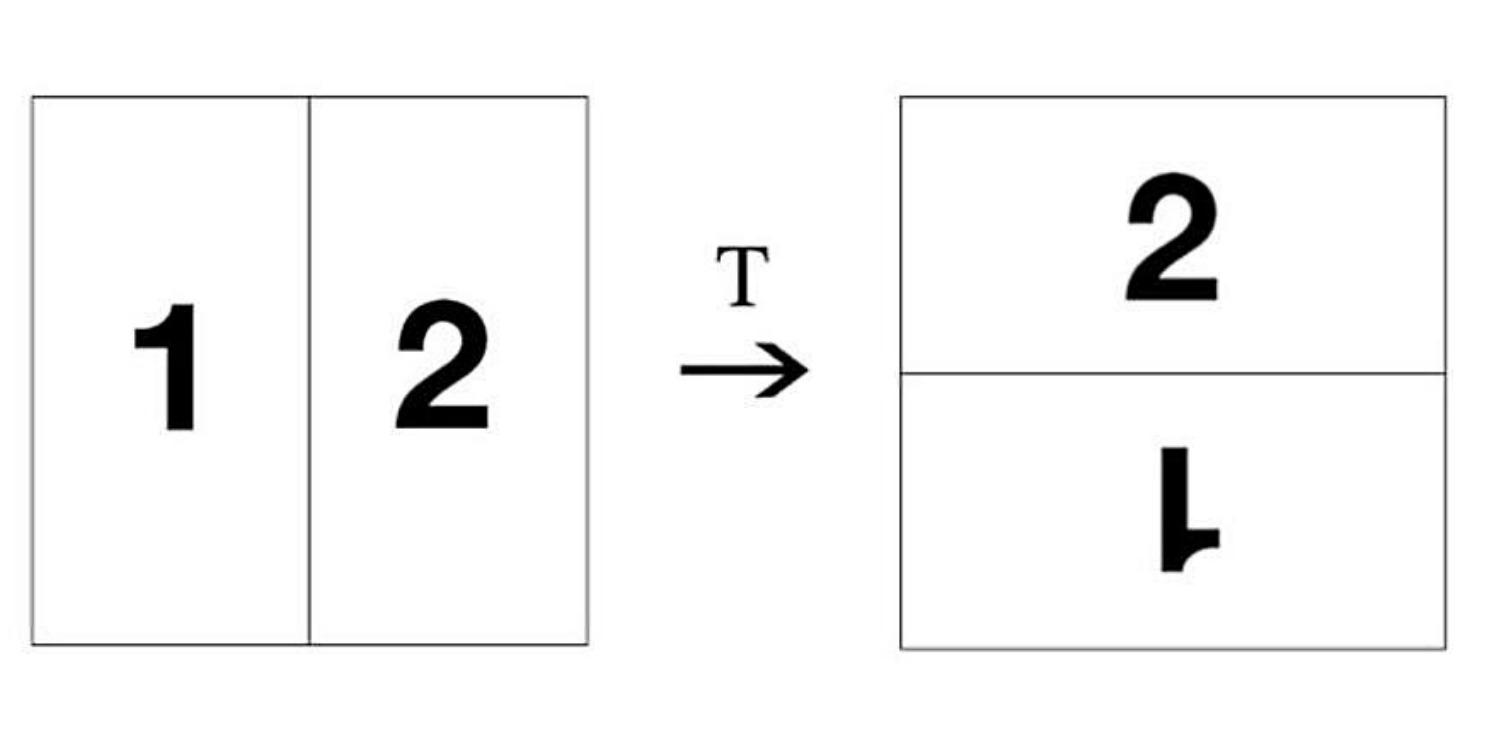}}
\caption{Folded Baker's map $T$.}\label{fig1}
\end{figure}

%Note that we can make the following observation:
%
%\begin{lemma}
%The collection of all $H^j_k$ defined in Definition \ref{D.2.1} is closed under non-empty intersection. 
%\end{lemma}

%\begin{definition}\label{Transformation}
%Define $T: Q_2'\rightarrow Q_2'$ by 
%\[ 
%T(x,y)=\begin{cases} 
%      -(2x+\frac{1}{2}, \frac{y}{2}+\frac{1}{4}) & x\in(-\frac{1}{2},0), \\
%     (2x-\frac{1}{2}, \frac{y}{2}+\frac{1}{4}) & x\in (0, \frac{1}{2}).   \end{cases}
%\]
%\end{definition}

The restriction of the above definitions to $Q_2'$ is due to the fact that the map $T$ is not defined on the line $\{x=\frac 12\}$ and more generally, $T^l$ is not defined on $Q_2\cap \{2^l x\in\mathbb Z\}$ for any $l\in\mathbb N$.  Similarly, $Q_2\cap \{2^l y\in\mathbb Z\}$ is not in the image of $T^l$.
However, $T$ is a bijection on the full-measure set $Q_2'$, and restriction to $Q_2'$ will  avoid some technicalities.  
%The reader may benefit from neglecting the distinction between $Q_2'$ and $Q_2$ in the arguments below.

%%\[
%%Q_2':=Q_2\setminus \{(x,y)\in Q_2\,|\, \text{$2^k x\in\mathbb Z$ or $2^k y\in\mathbb Z$ for some $k\in\mathbb N$}\}.
%%\]
%In what follows, all statements should be understood as being restricted to $Q_2'$ (we will omit this in the proofs in order to simplify notation).  
%%Since $Q_2'$ has measure 1, this will not compromise further results, but it will simplify our notation as we will not need to intersect various sets with $Q_2'$. 
%Alternatively, one can add intersections with $Q_2'$ in Definition \ref{D.2.1}.  (CHECK)

%Next observe that $T^{-1}$ and $T$ are related by rotations.
%
%\begin{proposition}
%With $R:Q_2\rightarrow Q_2$ given by $R(x,y)=(-y,x)$, we have $T^{-1}=RTR$ on $Q_2'$.
%% $$RTRT(x,y)=(x,y)$$ 
%% for all $(x,y)\in Q_2'$.  That is, $T^{-1}=RTR$. 
%\end{proposition}
%
%\begin{proof}
%Assume first that  $(x,y)\in Q_2'$ with $x< 0$
%%and  (-\frac{1}{2},0)\times (-\frac{1}{2},\frac{1}{2})$
%, when
%%, $T(x,y)=   (-2x-\frac{1}{2}, -\frac{1}{4}-\frac{y}{2})$, so 
%$RT(x,y)=(\frac y2 + \frac{1}{4},  -2x-\frac{1}{2})$.  Then $\frac y2+\frac{1}{4}>0$ shows that $TRT(x,y)=(y, -x)$ and $RTRT(x,y)=(x,y)$.
%% for all $(x,y)\in [-\frac{1}{2},0]\times [-\frac{1}2,\frac{1}2]$. 
%A similar computation yields the same when $x>0$,
%%for $(x,y)\in  (0,\frac{1}{2})\times (-\frac{1}{2},\frac{1}{2})$,
% and the claim follows. 
%\end{proof}

\begin{lemma}\label{ImageOfHorizontal}
Let  $k,k',l\in\mathbb{N}$, and let $j\in \mathbb{Z}\cap[0, 2^{k})$ and $j'\in \mathbb{Z}\cap[0, 2^{k'})$.
%  \qquad i\in \mathbb{Z}\cap[-2^{l-1}, 2^{l-1}).

\noindent (i)
If $i\in \mathbb{Z}\cap[0, 2^{l})$, then $T^l(H^j_k)\cap H^i_l$ is a single horizontal dyadic strip of width $2^{-(k+l)}$.  

\noindent (ii)
If $l\le k'$, then $T^{l}(H^j_k\cap V^{j'}_{k'})$ is a single horizontal dyadic rectangle from $G_{k+l,k'-l}$.
%of size $2^{-(k+l)}\times 2^{-(k'-l)}$.
% = H^i_{k+l}\cap V^{i'}_{k'-l}$ for some $i\in \mathbb{Z}\cap[0, 2^{k+l})$ and $i'\in \mathbb{Z}\cap[0, 2^{k'-l})$.
\end{lemma}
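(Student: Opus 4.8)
The plan is to prove (ii) first, by a routine induction on $l$, then to establish the auxiliary fact that $T^l$ carries each level-$l$ vertical dyadic strip onto a level-$l$ horizontal dyadic strip (indeed permutes them), and finally to deduce (i) from these two. Throughout one keeps in mind that $H^j_k$ and $V^{j'}_{k'}$ are intersected with $Q_2'$, so every iterate $T^l$ is well defined on them and each set produced below is a genuine dyadic rectangle; the cut line $\{x=\tfrac12\}$ and its preimages never cause trouble, because we will only ever apply $T$ directly to rectangles that lie strictly inside one of the halves $(0,\tfrac12)\times(0,1)$ or $(\tfrac12,1)\times(0,1)$.

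For (ii) I would induct on $l$ (uniformly in $k,k',j,j'$). The base case $l=1$ needs $k'\ge 1$, so $H^j_k\cap V^{j'}_{k'}$ has horizontal side of length $2^{-k'}\le\tfrac12$; since a dyadic interval of length $2^{-m}$ with $m\ge 1$ lies entirely in $(0,\tfrac12)$ or entirely in $(\tfrac12,1)$, the rectangle lies in one half, and there $T$ acts by a single affine branch of \eqref{2.0}, which doubles the horizontal side (keeping it a dyadic subinterval of $(0,1)$), halves the vertical side (keeping it dyadic), and possibly reflects — so the image is a single rectangle in $G_{k+1,k'-1}$. For the step, if $l+1\le k'$, then by the inductive hypothesis $T^l(H^j_k\cap V^{j'}_{k'})$ is a single rectangle $R\in G_{k+l,k'-l}$, which since $k'-l\ge 1$ again has horizontal side $\le\tfrac12$ and so lies in one half; the same one-branch computation gives $T(R)\in G_{k+l+1,k'-l-1}=G_{k+(l+1),k'-(l+1)}$, and $T^{l+1}(H^j_k\cap V^{j'}_{k'})=T(R)$.

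Next I would prove: for each $l\in\mathbb N$ and each $j'\in\mathbb Z\cap[0,2^l)$, $T^l(V^{j'}_l)$ is a single horizontal dyadic strip $H^{\tau_l(j')}_l$, with $\tau_l$ a permutation of $\mathbb Z\cap[0,2^l)$. The permutation property is free: the $V^{j'}_l$ partition $Q_2'$ and $T^l$ is a measure-preserving bijection of $Q_2'$, so the images $H^{\tau_l(j')}_l$ are pairwise disjoint, forcing $\tau_l$ injective. The first assertion goes by induction on $l$, using (ii). For $l=1$, \eqref{2.0} gives $T(V^0_1)=H^0_1$ and $T(V^1_1)=H^1_1$ outright. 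For the step, let $j''\in\mathbb Z\cap[0,2^{l+1})$; then $V^{j''}_{l+1}$ has horizontal side $2^{-(l+1)}\le\tfrac12$, hence lies in one half, and one affine branch of \eqref{2.0} sends it to a set of the form $H^\epsilon_1\cap V^a_l$ with $\epsilon\in\{0,1\}$ and $a\in\mathbb Z\cap[0,2^l)$ (the horizontal side doubles to length $2^{-l}$, and the full vertical side $(0,1)$ maps onto $(0,\tfrac12)$ or $(\tfrac12,1)$). Applying (ii) with exponent $l$ to $H^\epsilon_1\cap V^a_l$ — legitimate since $l\le l$ — yields $T^l(H^\epsilon_1\cap V^a_l)\in G_{1+l,\,l-l}=G_{l+1,0}$, i.e.\ a single full-width horizontal dyadic strip of width $2^{-(l+1)}$, namely some $H^i_{l+1}$; hence $T^{l+1}(V^{j''}_{l+1})$ is that $H^i_{l+1}$.

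Finally, (i) follows by slicing. Since $\{V^{j'}_l\}_{j'=0}^{2^l-1}$ partitions $Q_2'$, we have $H^j_k=\bigsqcup_{j'}\big(H^j_k\cap V^{j'}_l\big)$ and so $T^l(H^j_k)=\bigsqcup_{j'}T^l\big(H^j_k\cap V^{j'}_l\big)$; by (ii) with $k'=l$, each summand is a single rectangle in $G_{k+l,0}$, i.e.\ a single horizontal dyadic strip of width $2^{-(k+l)}$, and by the auxiliary fact the $j'$-th summand lies in $T^l(V^{j'}_l)=H^{\tau_l(j')}_l$. Since $\tau_l$ is a bijection of $\mathbb Z\cap[0,2^l)$, exactly one summand — the one with $j'=\tau_l^{-1}(i)$ — meets $H^i_l$, and that summand equals $T^l(H^j_k)\cap H^i_l$; it is a single horizontal dyadic strip of width $2^{-(k+l)}$, which is (i). I expect the only non-mechanical point to be the auxiliary fact in the third paragraph: statement (ii) by itself merely shows $T^l(H^j_k)$ is a disjoint union of $2^l$ thin horizontal strips and says nothing about how they distribute among $H^0_l,\dots,H^{2^l-1}_l$, and it is precisely the permutation action of $T^l$ on the vertical strips that forces "one strip inside each coarse strip" — so that is the step that requires the most care.
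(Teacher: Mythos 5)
Your proof is correct and takes essentially the same approach as the paper, namely induction on $l$ using the observation that $T$ acts on each half of $Q_2$ by a single affine branch that doubles dyadic intervals horizontally and halves them vertically. Your intermediate lemma — that $T^l$ carries the level-$l$ vertical dyadic strips bijectively onto the level-$l$ horizontal dyadic strips — is a clean way to package the one non-mechanical step (that the $2^l$ thin strips making up $T^l(H^j_k)$ land one per coarse strip $H^i_l$), which the paper leaves implicit in its terse "immediate by induction."
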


{\it Remarks.} 1.  That is, any dyadic rectangle $H^j_k\cap V^{j'}_{k'}$ is being doubled horizontally and halved vertically by repeated applications of $T$ until it becomes a horizontal dyadic strip.  Continued applications of $T$ then always double the number of horizontal dyadic strips and halve their widths, while these strips become fairly regularly distributed throughout $Q_2'$.  One can  write down recursive formulas for $T^l(H^j_k)$ and $T^{l}(H^j_k\cap V^{j'}_{k'})$, but we will not need these.
\smallskip

2.  Baker's map $B$, which coincides with $T$ for $x\in(\frac 12,1)$ but $B(x,y)= (2x, \frac{y}{2})$ for $x\in(0,\frac{1}{2})$, has the same properties and the distances between the strips constituting $B^l(H^j_k)$ are the same.

\begin{proof}
Both statements are immediate by induction on $l$, using that the bijections $T:V^{0}_1\to H^{0}_1$ and $T:V^{1}_1\to H^{1}_1$ are both of the form $(x,y)\mapsto\pm(2x,\frac y2)+(\mp1,\frac 12)$.
\end{proof}

This now directly implies that $T$ is mixing in the classical sense.  For the sake of completeness, let us extend $T$ (bijectively) to all of $Q_2$ by \eqref{2.0} for $(x,y)\in Q_2\setminus[\{\frac 12\}\times(0,1)]$ and by $T(x,y):=(y,x)$ for  $(x,y)\in\{\frac 12\}\times(0,1)$.

\begin{lemma}\label{UniversalDiscrete}
If $A, B\subseteq Q_2$ are measurable, then $$\lim_{n\rightarrow\infty} |T^n(A)\cap B|= |A|\cdot |B|.$$
Similarly, if $f,g\in L^2(Q_2)$, then 
\[
\lim_{n\to\infty} \int_{Q_2}(f\circ T^{-n}) \, g\,dxdy = \int_{Q_2} f\, dxdy \int_{Q_2} g\,dxdy.
\]
\end{lemma}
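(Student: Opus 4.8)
The plan is to prove the $L^2$ statement (the set version follows by taking $f=\chi_A$, $g=\chi_B$), and to do this via the standard "approximate by simple functions / use a dense subclass" strategy combined with the geometric information about $T^n$ from Lemma \ref{ImageOfHorizontal}. First I would observe that both sides of the claimed identity define bilinear forms in $(f,g)$ that are bounded on $L^2(Q_2)\times L^2(Q_2)$, uniformly in $n$: indeed $\|f\circ T^{-n}\|_2=\|f\|_2$ since $T$ is measure-preserving, so $|\int (f\circ T^{-n})g| \le \|f\|_2\|g\|_2$, and likewise $|\int f\,dxdy\int g\,dxdy|\le\|f\|_2\|g\|_2$. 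Hence it suffices to verify the limit on a dense subset of $L^2(Q_2)\times L^2(Q_2)$, and by a routine $\eps/3$ argument I may take $f$ and $g$ to each be a finite linear combination of indicator functions of dyadic squares — equivalently, functions that are constant on each square of some common grid $G_m$.

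By bilinearity it then reduces to the single case $f=\chi_R$, $g=\chi_S$ for two dyadic squares $R,S\in G_m$. Write $R = H^j_m\cap V^{j'}_m$. By Lemma \ref{ImageOfHorizontal}(ii), after $l=m$ applications of $T$ the rectangle $R$ becomes a single horizontal dyadic strip of width $2^{-2m}$, say $T^m(R)=H^a_{2m}$ for some $a$. For $n\ge m$ write $n=m+l$; then by Lemma \ref{ImageOfHorizontal}(i), for each $i\in\mathbb Z\cap[0,2^l)$ the set $T^l(H^a_{2m})\cap H^i_l$ is a single horizontal dyadic strip of width $2^{-(2m+l)}$. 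Summing over $i$, this says precisely that $T^n(R)$ is a disjoint union of $2^l$ horizontal dyadic strips, exactly one inside each strip $H^i_l$, each of width $2^{-(2m+l)}=2^{-m}\cdot 2^{-(m+l)} = |R|\cdot 2^{-l}$. In other words, $T^n(R)$ occupies exactly a $|R|$-fraction of each horizontal strip $H^i_l$.

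Now I estimate $|T^n(R)\cap S|$. The square $S\in G_m$ is contained in the horizontal strip $H^{j''}_m$ for the appropriate $j''$, which for $n\ge$ (something like) $2m$ — more precisely once $l = n-m \ge m$ — is a union of $2^{l-m}$ of the strips $H^i_l$. On each such strip $H^i_l$, the set $T^n(R)$ is a single horizontal strip of width $|R|\cdot 2^{-l}$, i.e. it is a product set $(0,1)\times(\text{interval})$; intersecting with $S$, which is a vertical slab $V^{j'''}_m$ of horizontal width $2^{-m}$ intersected with $H^{j''}_m$, I get $|T^n(R)\cap H^i_l \cap S| = 2^{-m}\cdot(|R|\,2^{-l})$ for each of the $2^{l-m}$ relevant strips $H^i_l$, so that $|T^n(R)\cap S| = 2^{l-m}\cdot 2^{-m}\cdot|R|\,2^{-l} = 2^{-2m}|R| = |S|\,|R|$ \emph{exactly}, for all $n\ge 2m$. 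Thus for this dense class the limit is attained exactly (not just in the limit), which makes the passage to general $f,g$ by the density/approximation argument above clean: given $\eps>0$, pick dyadic-simple $f_0,g_0$ with $\|f-f_0\|_2,\|g-g_0\|_2<\eps$, use the exact identity for $f_0,g_0$ for all large $n$, and control the three error terms by the uniform bound $\|\cdot\|_2\|\cdot\|_2$.

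I do not anticipate a serious obstacle: the only place requiring care is making the bookkeeping in the previous paragraph precise — correctly tracking which coarse strip a given dyadic square sits in and confirming that "one thin strip of $T^n(R)$ per strip $H^i_l$" really does translate into the fraction $|R|$ of each coarse horizontal strip $H^{j''}_m$ — but this is exactly the content of Lemma \ref{ImageOfHorizontal} and amounts to elementary counting of dyadic intervals. An alternative, should one prefer to avoid the explicit geometry, is to quote that $T$ is an ergodic (indeed mixing) measure-preserving transformation and invoke the standard characterization of mixing, but since Lemma \ref{ImageOfHorizontal} already gives the sharp combinatorial picture it is cleanest to argue directly as above.
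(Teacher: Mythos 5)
Your proof is correct and takes essentially the same route as the paper: reduce by density/approximation to indicator functions of dyadic squares, then use Lemma \ref{ImageOfHorizontal}(ii) followed by (i) to show that $|T^n(R)\cap S|=|R|\,|S|$ holds \emph{exactly} once $n\ge 2m$, so the limit for the dense class is trivial. The only cosmetic difference is that you begin with the $L^2$ statement and a bilinear-form density argument and obtain the set statement as a special case, whereas the paper proves the set statement first and derives the $L^2$ version by approximating $f,g$ by simple functions; the substance of both steps is identical.
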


\begin{proof}
Assume first that $A$ and $B$ are each a disjoint union of dyadic squares $H^j_k\cap V^{j'}_{k}$ (with a fixed $k$). Applying first Lemma \ref{ImageOfHorizontal}(ii) with $l:=k$ and then Lemma \ref{ImageOfHorizontal}(i) with $l:=n-k$, we see that for $n\ge 2k$ we have
\begin{equation} \label{2.1}
|T^{n}(H^j_k\cap V^{j'}_{k}) \cap (H^{i}_k\cap V^{i'}_{k})| = 2^{-4k} = |H^j_k\cap V^{j'}_{k}|\cdot |H^{i}_k\cap V^{i'}_{k}|
\end{equation}
for any $j,j',i,i'\in \mathbb{Z}\cap[0, 2^{k})$ because the intersection on the left is a single dyadic rectangle of size $2^{-k} \times 2^{-3k}$.
Hence $|T^n(A)\cap B|= |A|\cdot |B|$ for all $n\ge 2k$.  For general $A$ and $B$, the first claim follows  via  approximation by disjoint unions of  dyadic squares.
% it suffices to show the convergence when $A$ and $B$ are just (single) squares belonging to the grid $\mathcal{G}_k$ for some $k$. Indeed, assume we knew that the limit holds true for dyadic squares. Then, if $A=\cup_{i=1}^n A_i$ and $B=\cup_{j=1}^m B_j$ we have:
%$$|T^N(A)\cap B|=|T^{N}(\cup A_i)\cap \cup B_j|=\sum_{i,j} |T^N(A_i)\cap B_j|\rightarrow \sum_{i,j}|A_i||B_j|=|A||B|.$$ Now assume that $A$ and $B$ are just single squares belonging to the grid $\mathcal{G}_k$ for some $k$. 
%Then if $N\geq 2k$, $$|T^{N}(A)\cap B|=|T^{N-k}(A) \cap T^{-k}(B)|$$ since $T$ is measure preserving. Now note that $T^{N-k}(A)$ consists solely of horizontal strips while $T^{-k}$ consists only of vertical strips. Thus, $|T^{N}(A)\cap B|=|A||B|$ for $N\geq 2k$. 

The second claim follows from the first via approximation of $f$ and $g$ by simple functions.
\end{proof}

%\begin{remark}
%Note that we have not used any properties of $T$ except the following: $T$ is measure preserving and invertible, repeated application of $T$ maps every square to horizontal strips, and repeated application of $T^{-1}$ maps every square to vertical strips. 
%\end{remark}

%\begin{corollary}
%Let $f,g\in L^2(Q_2)$. Then, $(f\circ T^n, g)\rightarrow \int_{Q_2} f \int_{Q_2} g$ as $n\rightarrow\infty$. 
%\end{corollary}
%
%\begin{proof}
%The proof follows from approximating $f$ and $g$ by simple functions. 
%\end{proof}

This shows that $T$ is a (discrete time) universal mixer.

\begin{lemma} \label{L.2.2}
$T$ is a universal mixer on $Q_2$ in both the geometric and functional senses.
%If $f\in L^\infty(Q_2)$ with $\int_{Q_2} f \,dxdy=0$, then $\lim_{n\to\infty} \|f\circ T^{-n}\|_{\dot H^{-1/2}(Q_2)}=0$.  
%Similarly, if $\kappa_0,\epsilon_0\in(0,1)$, then there is $n_0\in\mathbb N$ such that $f\circ T^{-n}$ is $\kappa$-mixed to scale $\epsilon$ whenever $n\ge n_0$, $\kappa\ge\kappa_0$, and $\epsilon\ge \epsilon _0$.
%%Let $A\subset Q_2$ be measurable and assume $|A|=\frac{1}{2}$. Let $\epsilon>0$ be given. Then, there exists $N$ such that if $n\geq N$, $T^n(A)$ is mixed to scale $\epsilon$. 
%%Furthermore, for all $f_0\in L^2$ with $\int f_0=0$, $\{f_0\circ T^n\}_n$ converges to $0$ weakly in $L^2$ and, thus, strongly in $\dot H^{-1}$. 
\end{lemma}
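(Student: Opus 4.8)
The plan is to deduce both statements from Lemma~\ref{UniversalDiscrete}. Keeping the test function fixed in that lemma, it says precisely that $f\circ T^{-n}\rightharpoonup \fint_{Q_2}f$ weakly in $L^2(Q_2)$, so for \emph{mean-zero} $f$ (which, as always, we assume, and which is preserved by $T^{-n}$ since $T$ is measure-preserving) we get $\int_{Q_2}(f\circ T^{-n})\,g\,dxdy\to 0$ for every $g\in L^2(Q_2)$. We may assume $f\not\equiv 0$, the other case being trivial. Since $T$ is a measure-preserving bijection of $Q_2$, we have $\|f\circ T^{-n}\|_\infty=\|f\|_\infty$ for all $n$, so in both parts of Definition~\ref{D.1.1} it suffices to show that the relevant numerator tends to zero.

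For the functional sense, fix $(y,r)\in Q_2\times(0,1)$ and apply the weak convergence with $g:=\chi_{B_r(y)\cap Q_2}\in L^2(Q_2)$; since our functions are extended by zero outside $Q_2$, this yields $\fint_{B_r(y)}(f\circ T^{-n})(x)\,dx\to 0$ as $n\to\infty$. Each such average has absolute value at most $\|f\|_\infty$ and $Q_2\times(0,1)$ has finite measure, so dominated convergence gives $\Phi(f\circ T^{-n})\to 0$, where $\Phi$ is the mix-norm \eqref{1.2}. By the equivalence of $\Phi$ with $\|\cdot\|_{\dot H^{-1/2}}$ on mean-zero functions (the Remark after Definition~\ref{D.1.1}), $\|f\circ T^{-n}\|_{\dot H^{-1/2}}\to 0$, hence the functional mixing scale $\|f\circ T^{-n}\|_{\dot H^{-1/2}}^2\|f\|_\infty^{-2}$ tends to zero.

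For the geometric sense, fix $\kappa\in(0,1)$ and $\delta\in(0,1)$; since $\delta$ is arbitrary, it is enough to prove that $f\circ T^{-n}$ is $\kappa$-mixed to scale $\delta$ for all large $n$. Put $\phi_n(y):=\fint_{B_\delta(y)}(f\circ T^{-n})(x)\,dx$ for $y\in\overline{Q_2}$. As in the previous paragraph, $\phi_n(y)\to 0$ for each fixed $y$. Moreover $\{\phi_n\}$ is uniformly Lipschitz on $\overline{Q_2}$: since the symmetric difference of two discs of radius $\delta$ in $\bbR^2$ with centers at distance $h$ has area at most $C\delta h$, we have $|\phi_n(y)-\phi_n(y')|\le |B_\delta|^{-1}\|f\|_\infty\,|B_\delta(y)\triangle B_\delta(y')|\le L|y-y'|$ with $L$ depending only on $\delta$ and $\|f\|_\infty$ (in particular not on $n$). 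Pointwise convergence together with equicontinuity on the compact set $\overline{Q_2}$ forces uniform convergence: covering $\overline{Q_2}$ by finitely many balls of radius $\kappa\|f\|_\infty/(2L)$ and using the pointwise limit at their centers, one gets $\sup_{y\in Q_2}|\phi_n(y)|\le\kappa\|f\|_\infty$ for all large $n$, i.e. $f\circ T^{-n}$ is $\kappa$-mixed to scale $\delta$.

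The functional part is essentially automatic once one has Lemma~\ref{UniversalDiscrete} and the mix-norm equivalence. The one place needing a little care is the geometric part: Lemma~\ref{UniversalDiscrete} only yields decay of $\fint_{B_\delta(y)}(f\circ T^{-n})$ for each fixed $y$, whereas the geometric mixing scale demands a bound uniform in $y$; the resolution is the uniform-in-$n$ Lipschitz estimate for $y\mapsto\fint_{B_\delta(y)}(f\circ T^{-n})$ combined with compactness of $Q_2$. The zero-extension convention in the cube case (as opposed to $\bbT^2$) requires no changes to any of the estimates.
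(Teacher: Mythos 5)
Your proof is correct but takes a genuinely more self-contained route than the paper in both halves. For the functional sense, the paper's one-line argument is that weak $L^2(Q_2)$ convergence to $0$ (obtained exactly as you do, from Lemma~\ref{UniversalDiscrete}) gives strong convergence in $\dot H^{-1/2}(Q_2)$ via the compact embedding $L^2(Q_2)\hookrightarrow \dot H^{-1/2}(Q_2)$; you instead pass through the mix-norm $\Phi$ of \eqref{1.2}, testing against $\chi_{B_r(y)\cap Q_2}$ and invoking dominated convergence over $Q_2\times(0,1)$, then using the mix-norm $\approx \dot H^{-1/2}$ equivalence. Both are correct; the paper's version is terser, yours makes the mechanism visible and avoids invoking Rellich compactness. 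For the geometric sense, the paper simply cites Lemma~A.1 of \cite{YaoZla}, which converts smallness of $\|f\circ T^{-n}\|_{\dot H^{-1/2}}$ into geometric $\kappa$-mixedness; you instead prove this implication directly for the case at hand, by noting that the ball-averages $\phi_n(y)=\fint_{B_\delta(y)}(f\circ T^{-n})$ converge to $0$ pointwise in $y$ (again from weak convergence), are uniformly Lipschitz in $y$ with constant independent of $n$, and concluding uniform convergence on the compact set $\overline{Q_2}$ by a finite-cover argument. This is an honest Arzel\`a--Ascoli-style upgrade and is correct; it has the advantage of being self-contained (no external lemma from \cite{YaoZla}), and your Lipschitz estimate via the symmetric difference $|B_\delta(y)\triangle B_\delta(y')|\le C\delta\,|y-y'|$ is the right tool. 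Note that what you actually establish is that for every fixed $\delta>0$, $f\circ T^{-n}$ is $\kappa$-mixed to scale $\delta$ for all large $n$; this is precisely what ``geometric mixing scale $\to 0$'' means in Definition~\ref{D.1.1}, so the proof is complete.
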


\begin{proof}
From the previous result we know that $\{f\circ T^{-n}\}_{n\ge 1}$ converges to $0$ weakly in $L^2(Q_2)$ for any mean-zero $f\in L^\infty(Q_2)$, and hence strongly in $\dot H^{-1/2}(Q_2)$.  This shows that $T$ is a universal mixer in the functional sense, and the geometric sense claim follows from this and Lemma A.1 in \cite{YaoZla}.
\end{proof}

\subsection{Non-existence of a uniform mixing rate in any dimension}

We will now show that Lemma \ref{L.2.2} is sharp in the sense that no sequence of measure preserving bijections on $Q_d$ (for any $d\ge 2$) with no-flow or periodic boundary conditions possesses a  universal (functional or geometric) mixing rate.
% is possible for any measure preserving transformation nor any sequence of measure preserving transformations.

\begin{lemma}\label{L.2.3}
Let $\{T_n\}_{n\ge 1}$ be a sequence of measure-preserving bijections $T_n:Q_d\to Q_d$ and let $\lambda_n>0$ satisfy $\lim_{n\to\infty} \lambda_n=0$.  Then there is a measurable set $A\subseteq Q_d$ with $|A|=\frac 12$ and a sequence $n_j\to\infty$ as $j\to\infty$ such that $T_{n_j}(A)$ contains a ball with radius $\lambda_{n_j}^{1/d}$ for each $j\in\mathbb N$.  In particular, if $f:=\chi_A - \chi_{Q_d\setminus A}$, then $\int_{Q_d} f \,dxdy=0$ but $f\circ T_{n_j}^{-1}$ is not $\kappa$-mixed to scale $\lambda_{n_j}$ for any $\kappa\in (0,1)$ and $j\in\mathbb N$, as well as  $\|f\circ T_{n_j}^{-1}\|_{\dot H^{-1/2}(Q_d)}> \lambda_{n_j}$ for all large enough $j$.
\end{lemma}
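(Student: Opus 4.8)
The plan is to build the set $A$ greedily along the sequence $\{\lambda_n\}$, exploiting the fact that each $T_n$ is measure-preserving so that $T_n^{-1}$ of a small ball is a set of small measure that we can afford to include in $A$. First I would fix a sequence of disjoint open balls $B_j\subseteq Q_d$ (working in $(0,1)^d$, or in a fundamental domain of $\bbT^d$) and a subsequence $n_j\to\infty$ chosen so that $|B_j|$ is much larger than $\lambda_{n_j}$ — concretely, pick $B_j$ of radius $r_j$ with $r_j\to 0$ slowly, and then, using $\lambda_n\to 0$, choose $n_j$ large enough that $\lambda_{n_j}^{1/d}\le r_j$ and also $\sum_j |B_j\setminus B_{\lambda_{n_j}^{1/d}}(\text{center})|$-type error terms are summable and small. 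The point of shrinking radii is to make room: we want $\sum_j \lambda_{n_j}<\tfrac14$ (say), which is automatic once $n_j$ grows fast enough.

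Next I would set $A:=\bigcup_j T_{n_j}^{-1}(B_j')$ where $B_j'\subseteq B_j$ is a closed ball of radius exactly $\lambda_{n_j}^{1/d}$ concentric with $B_j$, together with enough extra ``filler'' mass from $Q_d\setminus\bigcup_j T_{n_j}^{-1}(B_j')$ to bring $|A|$ up to exactly $\tfrac12$. Since the $T_{n_j}$ are measure-preserving, $|T_{n_j}^{-1}(B_j')|=|B_j'|=c_d\lambda_{n_j}$, so $\big|\bigcup_j T_{n_j}^{-1}(B_j')\big|\le c_d\sum_j\lambda_{n_j}<\tfrac12$ by the choice of $n_j$, and the filler set is nonempty; this makes $A$ well-defined with $|A|=\tfrac12$. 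By construction $T_{n_j}(A)\supseteq B_j'$, a ball of radius $\lambda_{n_j}^{1/d}$, which is the first assertion. For the geometric mixing conclusion, on this ball $B_j'=B_{\lambda_{n_j}}^{1/d}(y)$ we have $\fint_{B_j'} f\,dx = 1$ (as $f\equiv 1$ on $T_{n_j}(A)\cap B_j'=B_j'$ while $\|f\|_\infty=1$), so $f\circ T_{n_j}^{-1}$ is not $\kappa$-mixed to scale $\lambda_{n_j}$ for any $\kappa<1$.

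For the $\dot H^{-1/2}$ bound I would use the mix-norm characterization $\eqref{1.2}$: since $\Phi(f\circ T_{n_j}^{-1})^2 \gtrsim \int$ over the relevant range of $(y,r)$ of $(\fint_{B_r(y)} f\circ T_{n_j}^{-1})^2$, and near the center of $B_j'$ with $r\le\lambda_{n_j}^{1/d}$ this integrand equals $1$, one gets $\Phi(f\circ T_{n_j}^{-1})^2 \gtrsim \lambda_{n_j}^{(d+1)/d}$, which for $d\ge 2$ and large $j$ exceeds $\lambda_{n_j}$ since $(d+1)/d<2$ — actually one must be slightly careful, so I would instead just note $\lambda_{n_j}^{(d+1)/d}\ge \lambda_{n_j}$ fails, and instead enlarge: observe the integrand is $\ge(1-c r^d/\lambda_{n_j})^2$-type for $r$ up to a fixed multiple of $\lambda_{n_j}^{1/d}$, still giving a lower bound of order $\lambda_{n_j}^{(d+1)/d}\gg\lambda_{n_j}$ for large $j$. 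Combined with the equivalence $\|\cdot\|_{\dot H^{-1/2}}\simeq\Phi(\cdot)$ for mean-zero functions (Remark 2 after Definition \ref{D.1.1}, citing \cite{MMP}), this yields $\|f\circ T_{n_j}^{-1}\|_{\dot H^{-1/2}(Q_d)}>\lambda_{n_j}$ for all large $j$ after possibly passing to a further subsequence.

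The main obstacle is bookkeeping the two competing requirements on the balls: they must be large enough that $f\equiv1$ on a genuine ball of radius $\lambda_{n_j}^{1/d}$ inside $T_{n_j}(A)$ (forcing nothing, since we can always take the ball as large as we like as long as its $T_{n_j}^{-1}$-preimage has measure below the budget), yet their preimages must have total measure under $\tfrac12$ so that $|A|=\tfrac12$ is achievable — this is handled purely by choosing $n_j$ to grow fast enough that $\sum_j\lambda_{n_j}$ is tiny, which is possible precisely because $\lambda_n\to0$. A minor technical point is disjointness of the preimages $T_{n_j}^{-1}(B_j')$, which is not automatic; I would sidestep it by not requiring disjointness (the measure bound $\big|\bigcup_j T_{n_j}^{-1}(B_j')\big|\le\sum_j|B_j'|$ holds regardless) and defining the filler as any measurable subset of the complement of the union of appropriate measure.
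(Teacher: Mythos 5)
Your construction is essentially the same as the paper's: choose a subsequence $n_j$ so that $\sum_j \lambda_{n_j}$ (times the unit-ball volume) is under $\tfrac12$, let $A$ contain the preimages $T_{n_j}^{-1}$ of small balls of radius $\lambda_{n_j}^{1/d}$, top up $A$ to measure $\tfrac12$, and conclude that $T_{n_j}(A)$ contains the required ball. The paper simplifies what you do by centering all target balls at a single point $(\tfrac12,\dots,\tfrac12)$; your additional scaffolding of disjoint balls $B_j\supseteq B_j'$ with radii $r_j$ is never actually used (only the $B_j'$ matter, and disjointness of the $B_j'$ or of their preimages is, as you note, irrelevant to the measure bound).

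One slip worth flagging: in the $\dot H^{-1/2}$ step you end up asserting that the lower bound $\lambda_{n_j}^{(d+1)/d}$ satisfies $\lambda_{n_j}^{(d+1)/d}\gg\lambda_{n_j}$ for small $\lambda_{n_j}$, which is false since $(d+1)/d>1$. The correct comparison, and the one the paper makes, is between the mix norm $\Phi$ itself and $\lambda_{n_j}$: your (correct) estimate $\Phi(f\circ T_{n_j}^{-1})^2\gtrsim\lambda_{n_j}^{(d+1)/d}$ gives $\Phi(f\circ T_{n_j}^{-1})\gtrsim\lambda_{n_j}^{(d+1)/2d}$, and since $(d+1)/2d<1$ for $d\ge 2$, this exceeds $\lambda_{n_j}$ once $\lambda_{n_j}$ is small. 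Equivalently, $\Phi^2\gtrsim\lambda_{n_j}^{(d+1)/d}>\lambda_{n_j}^2$ because $(d+1)/d<2$. With that correction the argument is sound and matches the paper's.
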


{\it Remarks.}  1.  One can also show that for any measurable $ B\subseteq Q_d$, there is a measurable $ A\subseteq Q_d$ such that
\[
 \limsup_{n\rightarrow\infty} \frac{\big| |T_n(A)\cap B| - |A|\cdot |B|\big|}{\lambda_n}=\infty.
 \]
  \smallskip

2.  While the last claim does not imply $\|f\circ T_{n_j}^{-1}\|_{\dot H^{-1}(Q_d)}> \lambda_{n_j}$, the construction from the proof below can be easily adjusted to achieve this (using the definition of the $\dot H^{-1}$-norm).

\begin{proof}
Pick $n_j$ so that $V_d\sum_{j=1}^\infty \lambda_{n_j}\le \frac 12$, with $V_d$ the volume of the unit ball in $\bbR^d$, and let $A\subseteq Q_d$ be any measurable set with $|A|=\frac 12$ containing $\bigcup_{j\ge 1}T^{-1}_{n_j}(B_{\lambda_{n_j}^{1/d}}(\frac 12,\dots,\frac 12))$.  This proves the first two claims. The last claim follows from this and equivalence of the $\dot H^{-1/2}$-norm and the mix-norm \eqref{1.2}, because $f\circ T_{n_j}^{-1}\equiv 1$ on at least $O(\lambda_{n_j})$ proportion of all the balls in $Q_d$ with any fixed radius $r\in(0,\frac 12 \lambda_{n_j}^{1/d})$ (hence the mix norm of $f\circ T_{n_j}^{-1}$ is at least $O(\lambda_{n_j}^{(d+1)/2d})$).
\end{proof}

\begin{lemma}\label{L.2.4}
For any $d\ge 2$, there is no universal mixer $\{T_n\}_{n\ge 1}$ with measure-preserving bijections $T_n:Q_d\to Q_d$ that has some mixing rate, in either the functional sense or the geometric sense with some $\kappa\in(0,1)$.  
%In particular, there are no universal exponential mixers in any dimension.
\end{lemma}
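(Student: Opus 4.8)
The plan is to argue by contradiction via Lemma~\ref{L.2.3}, the only real subtlety being the time shift $\tau$ permitted in the definition of a mixing rate. Suppose $\{T_n\}_{n\ge1}$ is a universal mixer with some mixing rate $\{\lambda_n\}_{n\ge1}$, either in the functional sense or in the geometric sense for some fixed $\kappa\in(0,1)$. I would begin with two harmless normalizations. Since every mixing scale lies in a fixed bounded interval, the sequence $\bar\lambda_n:=\sup_{m\ge n}\lambda_m$ is finite, positive, non-increasing, and still tends to $0$; as $\bar\lambda_n\ge\lambda_n$, replacing $\{\lambda_n\}$ by $\{\bar\lambda_n\}$ only weakens the defining inequality of a mixing rate, so $\{T_n\}$ still has mixing rate $\{\bar\lambda_n\}$, and I may assume $\{\lambda_n\}$ is non-increasing. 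Monotonicity then lets me take the shift to be a nonnegative integer: for each mean-zero $f\in L^\infty(Q_d)$ there is $N\in\bbN$ such that the mixing scale (in the relevant sense) of $f\circ T_n^{-1}$ is at most $\lambda_{n-N}$ for all $n>N$.

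The key device is to apply Lemma~\ref{L.2.3} not to $\{\lambda_n\}$ itself but to the auxiliary sequence $\mu_n:=\sqrt{\lambda_{\lfloor n/2\rfloor}}$, which is positive and tends to $0$. This produces a measurable $A\subseteq Q_d$ with $|A|=\tfrac12$, a subsequence $n_j\to\infty$, and $f:=\chi_A-\chi_{Q_d\setminus A}$ (mean-zero, $\|f\|_\infty=1$) such that $f\circ T_{n_j}^{-1}\equiv1$ on a ball of radius $\mu_{n_j}^{1/d}$ and $\|f\circ T_{n_j}^{-1}\|_{\dot H^{-1/2}(Q_d)}>\mu_{n_j}$ for all large $j$. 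The point of both the square root and the index halving is this: for every fixed $N$ and all $n\ge 2N$ one has $\lfloor n/2\rfloor\le n-N$, hence $\mu_n^2=\lambda_{\lfloor n/2\rfloor}\ge\lambda_{n-N}$ by monotonicity — exactly what is needed to absorb the finite but a priori unknown shift $N$ attached to $f$.

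Finally I would apply the mixing-rate hypothesis to this particular $f$ to get the corresponding $N$ and compare. In the geometric sense: since $f\circ T_{n_j}^{-1}\equiv1$ on a ball of radius $\mu_{n_j}^{1/d}$, it is not $\kappa$-mixed to any scale $\le\mu_{n_j}^{1/d}$, so its geometric mixing scale exceeds $\mu_{n_j}^{1/d}=\lambda_{\lfloor n_j/2\rfloor}^{1/(2d)}\ge\lambda_{n_j-N}^{1/(2d)}\ge\lambda_{n_j-N}$ for all large $j$ (the last step because $\lambda_{n_j-N}\le1$), contradicting the mixing-rate bound $\lambda_{n_j-N}$. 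In the functional sense: since $\|f\circ T_{n_j}^{-1}\|_\infty=1$, the functional mixing scale of $f\circ T_{n_j}^{-1}$ equals $\|f\circ T_{n_j}^{-1}\|_{\dot H^{-1/2}}^2>\mu_{n_j}^2=\lambda_{\lfloor n_j/2\rfloor}\ge\lambda_{n_j-N}$ for all large $j$, again contradicting the mixing-rate bound. In either case we reach a contradiction, which proves the lemma; the non-existence of universal exponential mixers (the final sentence of Theorem~\ref{T.1.1}(ii)) is the special case $\lambda_n=e^{-\gamma n}$.

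I expect the only genuine obstacle to be precisely this bookkeeping around the shift: a direct application of Lemma~\ref{L.2.3} to $\{\lambda_n\}$ fails because what must be beaten is $\lambda_{n-N}$ rather than $\lambda_n$, and $N$ depends on the set $A$ that Lemma~\ref{L.2.3} produces, which in turn depends on the input sequence, so $N$ cannot be fixed in advance. Passing first to the monotone majorant $\{\bar\lambda_n\}$ and then feeding in the delayed, square-rooted sequence $\mu_n=\sqrt{\lambda_{\lfloor n/2\rfloor}}$ is what breaks this circularity; the remaining estimates are routine.
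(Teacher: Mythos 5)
Your proof is correct and follows essentially the same strategy as the paper's: apply Lemma~\ref{L.2.3} to an auxiliary sequence that decays slowly enough to beat every shift $\{\lambda(n+m)\}_n$ of the hypothesized rate. The paper states this in one line, while you supply a concrete such sequence ($\mu_n=\sqrt{\lambda_{\lfloor n/2\rfloor}}$ after passing to the monotone majorant) and carefully track the shift $N$ and the square in the functional mixing scale; these are exactly the details the paper elides, and your handling of them is sound.
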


\begin{proof}
Given any rate $\lambda$ with $\lim_{n\to\infty} \lambda(n)=0$, pick a sequence $\{\lambda'_n\}_{n\ge 1}$ of positive numbers that decays more slowly than $\{\lambda(n+m)\}_{n\ge 1}$ for any $m\in\bbN$.  Then apply Lemma \ref{L.2.3}.
\end{proof}

\subsection{Almost-universal exponential mixing by $T:Q_2\to Q_2$}

Finally, part (iii) of the following lemma shows that $T$ is an almost-universal exponential mixer in both the geometric and functional senses. 
%We  start with the following observation.
%
%\begin{lemma} \label{L.2.4}
%If $\Gamma\subseteq Q_2$ is a rectifiable curve, then for all large enough $k\in\mathbb{N}$  we have 
%$$|\{Q\in G_k\,|\, \overline Q\cap \Gamma \neq \emptyset\} | \leq  2^{k+4} \mathcal{H}^1(\Gamma),$$
% where $\mathcal{H}^1(\Gamma)$ is the one-dimensional Hausdorff measure of $\Gamma$. 
%\end{lemma}
%
%\begin{proof}
%Let $k$ be such that ${\rm diam}(\Gamma)>5\cdot2^{-k}$, and let $Q\in G_k$ be such that $\overline Q\cap \Gamma \neq \emptyset$.  Let $P_Q$ be the interior of the closure of the union of the (at most 9) dyadic squares from $G_k$ whose closures intersect $\overline Q$.  Then ${\rm diam}(P_Q)<5\cdot2^{-k}$ so $\Gamma\cap (Q_2\setminus P_Q)\neq \emptyset \neq \Gamma\cap\overline Q$, which shows that  $\mathcal{H}^1(\Gamma\cap P_Q)\geq 2^{-k}$.  Since $P_Q\cap P_{Q'}\neq\emptyset$ for no more than 9 dyadic squares $Q'\in G_k$, we obtain 
%\[
%9\, \mathcal{H}^1(\Gamma) \ge 2^{-k} |\{Q\in G_k: \overline Q\cap \Gamma \neq \emptyset\} |
%\]
%and the result follows.
%\end{proof}

\begin{lemma} \label{L.2.5}
Assume that $f\in L^\infty(Q_2)$ with $\int_{Q_2} f \,dxdy=0$.  For $A\subseteq Q_2$, let $\mathcal C_r(A)$ be the set of $(x,y),(x',y')\in A$ such that $|(x,y)-(x',y')|<r$ and the closed straight segment connecting $(x,y)$ and $(x',y')$ is contained in $A$.
\begin{enumerate}[(i)]
\item
If there is 
a set $\Gamma\subseteq Q_2$ with box-counting dimension $\gamma<2$
%a rectifiable  curve $\Gamma$ with finite length 
and $r>0$ such that 
\[
\sup_{(x,y),(x',y')\in \mathcal C_r(Q_2\setminus\Gamma)} |f(x,y)-f(x',y')| < \kappa \|f\|_\infty,
\]
then there is $m\in\bbN$ such that  $f\circ T^{-n}$ is $\kappa$-mixed  to scale $2^{-(n-m)/2}$ for each $n\in\bbN$.
%with asymptotic rate $\log 2$.
%with an $f$-independent asymptotic rate.  
\item 
If there is a set $\Gamma\subseteq Q_2$ with box-counting dimension $\gamma<2$
%a rectifiable  curve $\Gamma$ with finite length 
and $\alpha>0$ such that 
\[
\sup_{(x,y),(x',y')\in \mathcal C_1(Q_2\setminus\Gamma)} \frac{|f(x,y)-f(x',y')|} {|(x,y)-(x',y')|^\alpha} < \infty,
\]
then the functional mixing scale of $f\circ T^{-n}$ decreases exponentially at a rate that only depends on $\min\{\alpha, 2-\gamma\}$.
\item
If  $f\in H^\sigma(Q_2)$ for some $\sigma\in(0,1]$, then the functional mixing scale  of $f\circ T^{-n}$ and its geometric mixing scale for any $\kappa\in(0,1)$ decrease exponentially at a rate that only depends on $\sigma$.
%For each $s\in(0,1]$ there is a universal $C_s>0$ such that if also $f\in H^s(Q_2)$, then
%\[
%\|f\circ T^{-2k}\|_{\dot H^{-1}}\leq C_s2^{-ks/3} \|f\|_{H^s}
%\]
%for any $k\in\mathbb N$.  In particular, $T$ exponentially mixes  $f$ in the functional and geometric senses, with an asymptotic rate that only depends on $s$.
\end{enumerate}
\end{lemma}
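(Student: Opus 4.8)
\emph{Common setup.} For $k\in\bbN$ (to be chosen as a function of $n$) let $f_k:=\bbE[f\mid\calF_k]=\sum_{S\in G_k}a_S\chi_S$ be the average of $f$ on each square of $G_k$; since $\int_{Q_2}f=0$ we get $\sum_S a_S=0$, $\|f_k\|_\infty\le\|f\|_\infty$, $|f-f_k|\le2\|f\|_\infty$, and $|f-f_k|\le\mathrm{osc}_S f$ on each $S\in G_k$. Applying Lemma~\ref{ImageOfHorizontal} first with $l=k$ and then with $l=n-k$ shows that for $n\ge k$ and $S\in G_k$ the set $T^n(S)$ is a disjoint union of $2^{n-k}$ full–width horizontal strips of width $2^{-(n+k)}$, exactly one inside each $H^a_{n-k}$. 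Hence $f_k\circ T^{-n}=\sum_S a_S\chi_{T^n(S)}$ depends only on the vertical coordinate $y$, and $\int_{H^a_{n-k}}(f_k\circ T^{-n})=2^{-(n+k)}\sum_S a_S=0$ for every $a$, i.e.\ $f_k\circ T^{-n}$ is mean–zero on every horizontal dyadic strip of width $\ell:=2^{-(n-k)}$. On the other hand, writing $\Omega_k:=\bigcup\{S\in G_k:S\cap\Gamma=\emptyset\}$, the remainder $(f-f_k)\circ T^{-n}$ is controlled by $\sup_{S\subseteq\Omega_k}\mathrm{osc}_S f$ on $T^n(\Omega_k)$ and by $2\|f\|_\infty$ on its complement $T^n(Q_2\setminus\Omega_k)$; since $\Gamma$ has box–counting dimension $\gamma$, for any $\gamma'\in(\gamma,2)$ at most $N(2^{-k})\lesssim 2^{k\gamma'}$ squares of $G_k$ meet $\Gamma$, so $|Q_2\setminus\Omega_k|\lesssim 2^{-k(2-\gamma')}$ and, crucially, $T^n(Q_2\setminus\Omega_k)$ meets each $H^a_{n-k}$ in at most $N(2^{-k})$ horizontal strips of width $2^{-(n+k)}$ — a set whose vertical projection has measure $\lesssim 2^{k\gamma'}2^{-(n+k)}$.

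\emph{Parts (ii) and (iii).} Here I would bound $\|f\circ T^{-n}\|_{\dot H^{-1/2}}$. Averaging the $y$–function $f_k\circ T^{-n}$ over a ball $B_r(y_0)$ is the same as integrating it in $y$ against the chord–length profile of $B_r(y_0)\cap Q_2$, a weight of total variation $\lesssim r^{-1}$; subtracting its mean over each strip $H^a_{n-k}$ (on which $f_k\circ T^{-n}$ has zero mean) gives $|\fint_{B_r(y_0)}(f_k\circ T^{-n})|\lesssim(\ell/r)\|f\|_\infty$. Inserting this into the mix–norm \eqref{1.2} and splitting at $r=\ell$ yields $\|f_k\circ T^{-n}\|_{\dot H^{-1/2}}\lesssim\Phi(f_k\circ T^{-n})\lesssim\ell^{1/2}\|f\|_\infty=2^{-(n-k)/2}\|f\|_\infty$. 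For the remainder, $\|(f-f_k)\circ T^{-n}\|_{\dot H^{-1/2}}\le\|f-f_k\|_{L^2}$, and a standard Poincar\'e/dyadic approximation estimate gives $\|f-f_k\|_{L^2}\lesssim 2^{-k\sigma}\|f\|_{H^\sigma}$ in case (iii); in case (ii) the H\"older hypothesis along diagonals gives $\mathrm{osc}_S f\lesssim 2^{-k\alpha}$ on good squares, whence $\|f-f_k\|_{L^2}\lesssim 2^{-k\alpha}+\|f\|_\infty 2^{-k(2-\gamma')/2}\lesssim 2^{-k\beta}$ with $\beta:=\min\{\alpha,(2-\gamma')/2\}$. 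Balancing the two exponents ($k\approx n/(1+2\sigma)$, resp.\ $k\approx n/(1+2\beta)$) gives $\|f\circ T^{-n}\|_{\dot H^{-1/2}}\lesssim e^{-cn}$ with $c>0$ depending only on $\sigma$, resp.\ on $\beta$ and hence (taking $\gamma'=(\gamma+2)/2$) only on $\min\{\alpha,2-\gamma\}$; so the functional mixing scale decays exponentially at the claimed rate, the $f$–dependent constants being absorbed into the time shift $\tau$ of Definition~\ref{D.1.2}. In case (iii) the geometric statement for each $\kappa$ then follows from Lemma~A.1 of \cite{YaoZla}, which bounds the geometric $\kappa$–mixing scale by a fixed power of the functional one (the $\kappa$–dependence again going into $\tau$).

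\emph{Part (i).} This is the delicate case, because one must reach the \emph{exact} constant $\kappa$ at the \emph{exact} scale $2^{-(n-m)/2}$, rather than a multiple of it. The hypothesis supplies slack: $M:=\sup_{\calC_r(Q_2\setminus\Gamma)}|f(x,y)-f(x',y')|$ is a fixed number $=\kappa''\|f\|_\infty$ with $\kappa''<\kappa$, so once $k$ is large enough that $\sqrt2\,2^{-k}<r$ one has $\mathrm{osc}_S f\le\kappa''\|f\|_\infty$ on every good square, i.e.\ $|f-f_k|\le\kappa''\|f\|_\infty$ on $\Omega_k$. Put $\eps:=2^{-(n-m)/2}$ and take $k$ as large as allowed by $2^{-(n-k)}\le\tfrac{\kappa}{C}\eps$ for a large absolute constant $C$, i.e.\ $k\approx\tfrac{n+m}{2}-\log_2(C/\kappa)$. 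Then: (a) the $y$–function estimate above gives $|\fint_{B_\eps(y_0)}(f_k\circ T^{-n})|\lesssim\tfrac{2^{-(n-k)}}{\eps}\|f\|_\infty\le\tfrac{\kappa-\kappa''}{2}\|f\|_\infty$ once $C$ is large; (b) $B_\eps(y_0)$ meets only $O(1/\kappa)$ of the coarse strips $H^a_{n-k}$ and has horizontal extent $\le2\eps$ inside each, so combining with the vertical equidistribution from the setup, $|B_\eps(y_0)\cap T^n(Q_2\setminus\Omega_k)|/|B_\eps|\lesssim 2^{k(\gamma'-2)}/C\to0$, whence $|\fint_{B_\eps(y_0)}((f-f_k)\circ T^{-n})|\le\kappa''\|f\|_\infty+\tfrac{\kappa-\kappa''}{4}\|f\|_\infty$ for $n$ large. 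Adding (a) and (b) gives $|\fint_{B_\eps(y_0)}(f\circ T^{-n})|<\kappa\|f\|_\infty$ for all $n$ beyond a threshold which — since $m$ remains free and may be taken large — is at most $m$; for the remaining small $n$ the scale $2^{-(n-m)/2}$ exceeds $\sqrt2$, so $B_\eps(y_0)\supseteq Q_2$ and the average vanishes by mean–zeroness. This gives the claim for all $n\in\bbN$.

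The main obstacle is exactly part (i): the exceptional set $T^n(Q_2\setminus\Omega_k)$ is \emph{not} small in measure (only of order $2^{-k(2-\gamma)}$, which dwarfs a ball of radius $2^{-(n-m)/2}$), so a bare measure bound is useless; one must instead exploit its \emph{fine} equidistribution — one thin strip of width $2^{-(n+k)}$ per coarse strip $H^a_{n-k}$, which is precisely what Lemma~\ref{ImageOfHorizontal} provides — together with the strictness of the oscillation hypothesis, in order to land strictly below the prescribed $\kappa$ at the prescribed scale.
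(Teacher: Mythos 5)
Your proof is correct, and while it rests on the same two pillars as the paper's (the equidistribution in Lemma~\ref{ImageOfHorizontal} and the box-counting bound on the bad set), it takes a genuinely different route at two points. First, in parts (ii)--(iii) you replace the paper's Fourier truncation $f_k=\sum_{|n|\le 2^{k/3}}a_n\phi_n$ by the dyadic conditional expectation, and then bound the mix-norm directly: you observe that $f_k\circ T^{-n}$ is a $y$-only function, mean-zero on every strip $H^a_{n-k}$, and you pay a factor $\ell/r$ for averaging such a function against the chord-length profile of $B_r(y_0)$ (a weight of total variation $\lesssim r^{-1}$). Integrating this against $dy\,dr$ gives $\Phi(f_k\circ T^{-n})\lesssim 2^{-(n-k)/2}\|f\|_\infty$. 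This is a cleaner and more explicit route to the $\dot H^{-1/2}$ estimate than the paper's, which bounds averages over dyadic squares $Q\in G_k$ and then appeals somewhat loosely to the mix-norm equivalence; the trade-off is a slightly worse (but still $\sigma$-only) exponent because your $k$ must be balanced against $n-k$ rather than fixed at $\lfloor n/2\rfloor$. Second, in part (i) the paper averages $f\circ T^{-2k}$ over dyadic squares $Q\in G_k$ --- where the good-part average vanishes \emph{exactly} --- and only transfers to balls at the very end via the shift $m$; you instead average over $B_\eps(y_0)$ from the start, pick $k$ slightly less than $n$ so that $\ell\ll\eps$, and absorb the resulting $O(\ell/\eps)$ error into the slack $\kappa-\kappa''$. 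Both arguments work; the paper's is a bit tidier because the dyadic vanishing is exact. One small slip: your claimed bad-set fraction $\lesssim 2^{k(\gamma'-2)}/C$ in (b) is off --- the careful count (number of coarse strips met $\lesssim\eps/\ell$, bad vertical thickness $\lesssim 2^{k\gamma'}2^{-(n+k)}$ per strip, horizontal extent $\lesssim\eps$) gives fraction $\lesssim 2^{k(\gamma'-2)}$ with no gain from $C$ --- but since this tends to $0$ as $n\to\infty$ anyway, the conclusion is unaffected once $m$ is taken large enough.
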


{\it Remark.}  The hypothesis in (ii) means that  $f$ is H\" older continuous on $Q_2$ except possibly ``across'' $\Gamma$.  

\begin{proof}
(i) 
Let the supremum above be $\kappa' \|f\|_\infty$, with $\kappa'<\kappa$ and let $\gamma'\in(\gamma, 2)$.  Fix any $k\in\mathbb N$ so that $2^{-k}<\frac r2$.  For $j,j'\in\mathbb Z\cap [0,2^{k})$ let $a_{j,j'}$ be the average of $f$ over the dyadic square $H_k^j\cap V_k^{j'}$,  and let $g(x,y):=a_{j,j'}$ for $(x,y)\in H_k^j\cap V_k^{j'}$.  Note that $\sum_{j,j'=0}^{2^{k}-1} a_{j,j'}=0$ because $\int_{Q_2} f \,dxdy=0$.
% and the squares all have the same size.

Just as in the proof of Lemma \ref{UniversalDiscrete}, we find that the intersection of any dyadic square $Q\in G_k$ with $T^{2k}(Q')$ for any dyadic square $Q'\in G_k$ is a single dyadic rectangle of size $2^{-k}\times 2^{-3k}$ (and the latter are disjoint for distinct $Q'\in G_k$).  Hence 
\[
\left| \fint_{Q} f\circ T^{-2k}\,dxdy \right| \le \left| \fint_{Q} g\circ T^{-2k}\,dxdy \right|  + \left| \fint_{Q} (f-g)\circ T^{-2k}\,dxdy \right|  \le  0 + \kappa' \|f\|_\infty + 2^{1+(\gamma'-2)k}  \|f\|_\infty
\]
because $\sum_{j,j'=0}^{2^{k}-1} a_{j,j'}=0$,  and $|f-g|$ is bounded by $2\|f\|_\infty$ and exceeds $\kappa' \|f\|_\infty$ on fewer than $2^{\gamma'k}$ squares $Q'\in G_k$ when $k$ is large enough (namely those whose closures contain points from $\Gamma$; then the intersection of $Q$ with $T^{2k}$ applied to these ``bad'' squares has measure less than $2^{(\gamma'-4)k} $).
%\[
%\left| \fint_{Q} f\circ T^{-2k}\,dxdy \right| \le \left| \fint_{Q} g\circ T^{-2k}\,dxdy \right|  + \left| \fint_{Q} (f-g)\circ T^{-2k}\,dxdy \right|  \le  0 + \kappa' \|f\|_\infty + 2^{5-k} \mathcal{H}^1(\Gamma) \|f\|_\infty
%\]
%because $\sum_{j,j'=0}^{2^{k}-1} a_{j,j'}=0$,  and $|f-g|$ is bounded by $2\|f\|_\infty$ and exceeds $\kappa' \|f\|_\infty$ on at most $2^{k+4} \mathcal{H}^1(\Gamma)$ squares $Q'\in G_k$ by Lemma \ref{L.2.4} (so the intersection of $Q$ with $T^{2k}$ applied to these ``bad'' squares has measure at most $2^{4-3k} \mathcal{H}^1(\Gamma)$).
Since $\kappa'<\kappa$ and $\gamma'<2$ we find that
\[
\left| \fint_{Q} f\circ T^{-2k}\,dxdy \right| \le \frac{ \kappa'+\kappa}2 \|f\|_\infty 
\]
for all large $k$ and any $Q\in G_k$ (a similar analysis shows the same for $T^{-(2k+1)}$ in place of $T^{-2k}$). The claim now  follows easily by taking a  large enough $m$ (depending on $\kappa-\kappa'$) and $k:=\lfloor\frac n2\rfloor$ for each $n\in\mathbb N$, so that the absolute value of the average of  $f\circ T^{-n}$ over any ball of radius $2^{-(n-m)/2}$ cannot exceed the maximum of the absolute values of its averages over all the squares from $G_k$ by more than $\frac{ \kappa-\kappa'}2 \|f\|_\infty$.

(ii)  
This is similar to (i), but with $\kappa' \|f\|_\infty$ replaced by $C2^{-\alpha (k-1)}$ due to the hypothesis (with $C$ the supremum in the statement of (ii)).  Hence we get
\[
\left| \fint_{Q} f\circ T^{-2k}\,dxdy \right| \le 2^{1-\min\{\alpha, 2-\gamma\}k}  \left( C + \|f\|_\infty \right)
\]
for all large $k$ and any $Q\in G_k$ (and again the same holds with $T^{-(2k+1)}$ in place of $T^{-2k}$).  The claim now easily follows from the equivalence of the $\dot H^{-1/2}$-norm and the mix-norm \eqref{1.2}.
%The claim now easily follows from the discussion after Corollary 1.5 in \cite{YaoZla}, after taking $k\to\infty$.

(iii)
Let $f=\sum_{n\in\mathbb N^2} a_{n}\phi_{n}$ with $\phi_{n}$ trigonometric polynomials such that $\|\phi_n\|_\infty\le 1$ and $\|\nabla\phi_n\|_\infty\le C|n|$, as well as 
\[
C^{-1} \sum_{n\in\mathbb N^2} |a_{n}|^2|n|^{2r} \le 
\|f\|_{W^{r,2}}^2\le C \sum_{n\in\mathbb N^2} |a_{n}|^2|n|^{2r}
\]
 for some  $C\ge 1$ and $r\in\{0,\sigma\}$.  If $f_k:=\sum_{|n|\le 2^{k/3}} a_{n}\phi_{n}$, then the argument in (i) shows for any square $Q\in G_k$,
 \beq\lb{2.3}
\left| \fint_{Q} f_k\circ T^{-2k}\,dxdy \right| \le 2^{1-k}   \sum_{|n|\le 2^{k/3}} |a_{n}| C|n| \le C 2^{1-k} \left (\sum_{|n|\le 2^{k/3}} |a_{n}|^2 \right)^{1/2} 2^{2k/3} \le C^2 2^{1-k/3} \|f\|_{H^\sigma}.
\eeq
%This and the discussion after Corollary 1.5 in \cite{YaoZla} imply (PROBLEM)
%\[
%\|f_k\circ T^{-2k}\|_{\dot H^{-1}}\le C' 2^{-k/3} \max\{ \|f\|_{H^s},2^{-k/6}\|f_k\|_{L^\infty}\}
%\]
% for some  $C'\ge 1$.  
Equivalence of the $\dot H^{-1/2}$-norm and the mix-norm \eqref{1.2}, together with
 \beq\lb{2.4}
\|(f-f_k)\circ T^{-2k}\|_{\dot H^{-1/2}}\le \|(f-f_k)\circ T^{-2k}\|_{L^2} = \|f-f_k\|_{L^2} \le C2^{-k\sigma/3} \|f\|_{H^\sigma}
\eeq
and the fact that both claims again hold with $T^{-(2k+1)}$ in place of $T^{-2k}$, now yields the functional mixing scale claim.  The geometric mixing scale claim follows from this and Lemma A.1 in \cite{YaoZla}.
\end{proof}

\subsection{Universal mixing and almost-universal exponential mixing on $Q_d$}

Let us now consider the case $x=(x_1,\dots,x_d)\in Q_d$ with $d\ge 3$ instead of $(x,y)\in Q_2$.

\begin{definition} \label{D.2.7}
Let $Q_d:=(0,1)^d$, and 
\[
Q_d':=Q_d\setminus \{x\in Q_d\,|\, \text{$2^k x_i\in\mathbb Z$ for some $i\in\{1,\dots,d\}$ and $k\in\mathbb N$}\}.
\]
For $i\in\{1,\dots,d-1\}$, let $T_{d,i}:Q_d\to Q_d$ be given by 
\[
T_{d,i}(x):= (x_1,\dots,x_{i-1},[T(x_i,x_d)]_1, x_{i+1},\dots,x_{d-1},[T(x_i,x_d)]_2),
\]
where $T$ is from Definition \ref{D.2.1} and $[T]_j$ is its $j^{\rm th}$ coordinate (with $T$ extended to all of $Q_2$ as before Lemma \ref{UniversalDiscrete}).   Also let $T_{d}:Q_d\to Q_d$ be given by
\[
T_d:= T_{d,d-1}\circ\dots\circ T_{d,1}.
\]
For $k\in\mathbb{N}\cup\{0\}$ and $j_1,\dots,j_{d}\in \mathbb{Z}\cap[0,2^{k})$, let
$$H^{j_d}_k:= \left[ \left(0,1 \right)^{d-1}\times \left(\frac {j_d}{2^{k}}, \frac {j_d+1}{2^{k}} \right) \right] \cap Q_d',$$
$$V^{j_1,\dots,j_{d-1}}_k:= \left[ \left(\frac {j_1}{2^{k}}, \frac {j_1+1}{2^{k}} \right)\times\dots\times \left(\frac {j_{d-1}}{2^{k}}, \frac {j_{d-1}+1}{2^{k}} \right) \times \left(0,1 \right) \right] \cap Q_d'$$
%Finally, for $k\in\mathbb{N}$ and $j\in \mathbb{Z}\cap[-2^{k-1},2^{k-1})$, let
%$$H^j_k:= \left[ \left(-\frac{1}{2},\frac{1}{2} \right)\times \left(\frac j{2^{k}}, \frac {j+1}{2^{k}} \right) \right] \cap Q_2',$$
%$$V^j_k:= \left[ \left(\frac j{2^{k}}, \frac {j+1}{2^{k}} \right)\times \left(-\frac{1}{2},\frac{1}{2} \right) \right] \cap Q_2'$$
be the {\it horizontal dyadic slabs and vertical dyadic strips of width $2^{-k}$}, respectively.   Finally, let 
\[
G^d_{k,k'}:= \left\{H^{j_d}_k\cap V^{j_1,\dots,j_{d-1}}_{k'} \,\Big|\, j_d\in \mathbb{Z}\cap[0,2^{k}) \text{ and } j_1,\dots,j_{d-1} \in \mathbb{Z}\cap[0,2^{k'}) \right\}
\]
 be the collection of all  {\it dyadic boxes} of size $2^{-k'}\times\dots\times 2^{-k'}\times 2^{-k}$
  and let $G^d_k:=G^d_{k,k}$  be the collection of all  {\it dyadic cubes} of size $2^{-k}\times\dots\times 2^{-k}$.
%%The sets  $H^{j_d}_k\cap V^{j_1,\dots,j_{d-1}}_{k'}$ will be called {\it dyadic boxes}, and also {\it dyadic  squares} if $k=k'$.  Finally, let $G_k:=\{H^{j_d}_k\cap V^{j_1,\dots,j_{d-1}}_{k}\,|\, j_1,\dots,j_{d}\in \mathbb{Z}\cap[0,2^{k})\}$ be the collection of all dyadic squares with side $2^{-k}$.
\end{definition}

That is, $T_{d,i}$ performs the transformation $T$ in the $x_ix_d$ plane while all other coordinates are preserved. Therefore, any dyadic box 
%$H^{j_d}_k\cap V^{j_1,\dots,j_{d-1}}_{k'}$ with $k$ a multiple of $d-1$ 
is being doubled in all $d-1$ horizontal directions $x_1,\dots,x_{d-1}$ and shrunk by a factor of $2^{d-1}$ in the vertical direction $x_d$ by each repeated application of $T_d$ until it becomes a horizontal dyadic slab.  Continued applications of $T_d$ then always multiply the number of horizontal dyadic slabs by $2^{d-1}$ and shrink their widths by a factor of $2^{d-1}$, while these slabs become fairly regularly distributed throughout $Q_d$.
 This immediately gives the following extension of Lemma \ref{ImageOfHorizontal}.

\begin{lemma}\label{L.2.8}
Let  $k,k',l\in\mathbb{N}$, and let $j_d\in \mathbb{Z}\cap[0, 2^{k})$ and $j_1,\dots,j_{d-1}\in \mathbb{Z}\cap[0, 2^{k'})$.
%  \qquad i\in \mathbb{Z}\cap[-2^{l-1}, 2^{l-1}).
\begin{enumerate}[(i)]
\item
If $i\in \mathbb{Z}\cap[0, 2^{(d-1)l})$, then $T_d^l(H^{j_d}_k)\cap H^i_{(d-1)l}$ is a single horizontal dyadic slab of width $2^{-(k+(d-1)l)}$.  
\item
If $l\le k'$, then $T_d^{l}(H^{j_d}_k\cap V^{j_1,\dots,j_{d-1}}_{k'})$ is a single horizontal dyadic box from $G^d_{k+(d-1)l, k'-l}$.
% = H^{i_d}_{k+(d-1)l}\cap V^{i_1,\dots,i_{d-1}}_{k'-l}$ for some $i_d\in \mathbb{Z}\cap[0, 2^{k+(d-1)l})$ and $i_1,\dots,i_{d-1}\in \mathbb{Z}\cap[0, 2^{k'-l})$.
\end{enumerate}
\end{lemma}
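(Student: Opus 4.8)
Lemma~\ref{L.2.8} is the $d$-dimensional analogue of Lemma~\ref{ImageOfHorizontal}, and its proof should be essentially identical in structure: induction on $l$, with the inductive step reduced to the single building block $T_d$ acting on one dyadic slab or box. The key point to isolate at the outset is the exact effect of one application of $T_d=T_{d,d-1}\circ\dots\circ T_{d,1}$ on a horizontal dyadic slab $H^{j_d}_k$ and on a dyadic box $H^{j_d}_k\cap V^{j_1,\dots,j_{d-1}}_{k'}$. Since each $T_{d,i}$ performs the two-dimensional folded Baker's map $T$ in the $x_i x_d$ plane while fixing the other coordinates, Lemma~\ref{ImageOfHorizontal} (applied in that plane) tells us that $T_{d,i}$ doubles the $x_i$-extent and halves the $x_d$-extent of any dyadic box, turning a box dyadic in the $x_i$-direction into one still dyadic in all directions, provided the $x_d$-width is small enough to be halved (i.e.\ the relevant exponent is positive). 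Composing $T_{d,1},\dots,T_{d,d-1}$ in turn therefore doubles the extent in \emph{each} of the $d-1$ horizontal directions and divides the $x_d$-extent by $2^{d-1}$ — this is exactly the geometric picture stated in the paragraph preceding the lemma.

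\textbf{Step 1: one step of $T_d$.} First I would record the base case $l=1$. Starting from $H^{j_d}_k\cap V^{j_1,\dots,j_{d-1}}_{k'}$ with $k'\ge 1$: apply $T_{d,1}$; by Lemma~\ref{ImageOfHorizontal}(ii) in the $x_1x_d$ plane (with the two-dimensional roles of $k,k',l$ played by $k,k',1$) the image is dyadic, with $x_1$-width $2^{-(k'-1)}$ and $x_d$-width $2^{-(k+1)}$, while the other horizontal widths are unchanged. Then apply $T_{d,2}$, which acts in the $x_2x_d$ plane: the current $x_d$-width $2^{-(k+1)}$ gets halved to $2^{-(k+2)}$ and the $x_2$-width is doubled to $2^{-(k'-1)}$; crucially the object is still a dyadic box in all coordinates because $T$ maps dyadic intervals to dyadic intervals. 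Iterating through $T_{d,d-1}$, the image $T_d(H^{j_d}_k\cap V^{j_1,\dots,j_{d-1}}_{k'})$ is a single dyadic box with all horizontal widths $2^{-(k'-1)}$ and vertical width $2^{-(k+d-1)}$, i.e.\ an element of $G^d_{k+(d-1),k'-1}$. When $k'=0$ the same computation (with the $V$-strip being the whole horizontal cross-section) shows that $T_d(H^{j_d}_k)$ is a horizontal slab of width $2^{-(k+d-1)}$, and intersecting with $H^i_{d-1}$ picks out exactly one such slab since the $2^{d-1}$ images tile $Q_d'$ in the $x_d$-direction.

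\textbf{Step 2: induction on $l$.} For (ii), assume the claim for $l-1\le k'-1$, so $T_d^{l-1}(H^{j_d}_k\cap V^{j_1,\dots,j_{d-1}}_{k'})\in G^d_{k+(d-1)(l-1),\,k'-l+1}$, a single dyadic box whose vertical width is $2^{-(k+(d-1)(l-1))}$ and whose horizontal widths are $2^{-(k'-l+1)}$ with $k'-l+1\ge 1$. Applying Step~1 to this box (it is a box of the form ``$H\cap V$'' with the roles of $k,k'$ now played by $k+(d-1)(l-1)$ and $k'-l+1$) gives a single dyadic box in $G^d_{k+(d-1)l,\,k'-l}$, completing the induction. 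For (i), either run the analogous induction directly on slabs using the $k'=0$ version of Step~1 at each stage, or simply note (i) is the $k'=l$ special case of (ii) followed by one more application of $T_d$ and intersection with $H^i_{(d-1)l}$ — more cleanly, take $l'\le l$ with a box of zero horizontal codimension and observe the $2^{(d-1)l}$ images of $H^{j_d}_k$ under $T_d^l$ are pairwise disjoint horizontal slabs of width $2^{-(k+(d-1)l)}$ that together tile $Q_d'$, so each $H^i_{(d-1)l}$ meets exactly one of them in a slab of width $2^{-(k+(d-1)l)}$.

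\textbf{Main obstacle.} There is no deep difficulty here; the lemma is stated as ``immediate'' from the geometric description, and the only thing requiring genuine care is bookkeeping of the exponents as the $d-1$ factors $T_{d,i}$ are composed — in particular checking that the $x_d$-width stays a non-positive dyadic power so that each $T_{d,i}$ genuinely halves it rather than the map being undefined, which is guaranteed by $l\le k'$ (equivalently $k'-l+1\ge 1$ at each stage) in (ii). The one modeling point worth stating explicitly in the write-up is that, because each $T_{d,i}$ is (on the relevant dyadic pieces) of the form $(x_i,x_d)\mapsto \pm(2x_i,\tfrac{x_d}2)+(\mp 1,\tfrac12)$ in the $x_ix_d$ plane and the identity elsewhere, it maps dyadic boxes to dyadic boxes bijectively on $Q_d'$, so ``single dyadic box/slab'' is preserved at every step; this is exactly the observation used in the proof of Lemma~\ref{ImageOfHorizontal} and needs only to be invoked coordinate-plane by coordinate-plane.
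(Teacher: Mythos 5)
Your proof takes essentially the same route the paper does: it reduces everything to the two-dimensional Lemma~\ref{ImageOfHorizontal} applied in each coordinate plane $x_ix_d$, records the effect of a single application of $T_d$ (your Step~1), and then inducts on $l$. The paper itself gives no explicit proof of Lemma~\ref{L.2.8} — it just states that the lemma is "immediate" from the geometric description — so your write-up is in fact more detailed than the source.

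One slip worth flagging is in your treatment of part~(i). You assert that the $2^{(d-1)l}$ component slabs of $T_d^l(H^{j_d}_k)$ "together tile $Q_d'$" and from this deduce that each $H^i_{(d-1)l}$ meets exactly one of them. The first statement is not correct (these slabs tile the image $T_d^l(H^{j_d}_k)$, a set of measure $2^{-k}$, not all of $Q_d'$), and the "one in each $H^i_{(d-1)l}$" conclusion does not follow from disjointness and a width count alone; it is exactly the nontrivial distributional content of part~(i). The cleanest way to close this is the route you yourself hint at: run the induction on slabs directly, tracking at each stage the intermediate images under $T_{d,1},\dots,T_{d,m}$ as a union of $2^m$ slabs of width $2^{-(k+m)}$, one in each $H^{i'}_m$, and invoking Lemma~\ref{ImageOfHorizontal}(i) in the $x_{m+1}x_d$ plane (together with the fact that $T_{d,m+1}$ maps $H^{i'}_m$ onto a disjoint union of one slab in the lower and one in the upper half, with distinct $i'$ yielding disjoint pairs) to push from $m$ to $m+1$. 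This is the same "the building blocks are of the form $(x,y)\mapsto\pm(2x,\tfrac y2)+(\mp 1,\tfrac 12)$, so dyadic structure and disjointness are preserved" observation used in the proof of Lemma~\ref{ImageOfHorizontal}, and you correctly identify it at the end of your write-up; it just needs to replace the erroneous "tiling" shortcut.
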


With this lemma in hand, the remaining mixing results in two dimensions and their proofs easily extend to any dimension.

\begin{lemma}\label{L.2.9}
Lemmas \ref{UniversalDiscrete} and \ref{L.2.2} hold for any $d\ge 2$, with $Q_2$ and $T$ replaced by $Q_d$ and $T_d$.  The same is true for Lemma \ref{L.2.5} if we also replace $\gamma<2$ and $2-\gamma$ by $\gamma<d$ and $d-\gamma$ in (i,ii).
%, and  3 by $d+1$ in (iii).
\end{lemma}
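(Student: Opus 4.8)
The plan is to observe that all the two-dimensional arguments used only one structural fact about $T$: that repeated application of $T$ turns any dyadic rectangle into a horizontal dyadic strip (Lemma \ref{ImageOfHorizontal}), and that once a set is a union of horizontal dyadic strips, further applications of $T$ redistribute it uniformly at geometrically decreasing vertical scale. Lemma \ref{L.2.8} provides exactly the analogous statement for $T_d$, with the sole bookkeeping change that each application of $T_d$ multiplies the number of horizontal slabs by $2^{d-1}$ rather than by $2$, so that $T_d^l$ plays the role that $T^{(d-1)l}$ would play in two dimensions. Thus the proof is a transcription of the earlier proofs with the dyadic-cube/dyadic-box combinatorics upgraded from two to $d$ coordinates.

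First I would redo Lemma \ref{UniversalDiscrete}: for $A,B$ disjoint unions of dyadic cubes in $G^d_k$, apply Lemma \ref{L.2.8}(ii) with $l:=k$ and then Lemma \ref{L.2.8}(i) with $(d-1)l := n-k$ (i.e.\ $n \ge k + (d-1)k = dk$, say $n \ge dk$), to conclude that $T_d^n(Q')\cap Q''$ is a single dyadic box of size $2^{-k}\times\cdots\times 2^{-k}\times 2^{-(2d-1)k}$ and hence has measure $2^{-2dk} = |Q'|\cdot|Q''|$ for every pair of dyadic cubes $Q',Q''\in G^d_k$. Summing and then approximating arbitrary measurable $A,B$ (resp.\ $L^2$ functions $f,g$) by unions of dyadic cubes (resp.\ simple functions on the dyadic grid) gives $|T_d^n(A)\cap B|\to|A||B|$ and the weak-$L^2$ mixing statement. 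Lemma \ref{L.2.2} then follows verbatim: weak-$L^2$ convergence of $f\circ T_d^{-n}$ to $0$ for mean-zero $f\in L^\infty$ yields strong convergence in $\dot H^{-1/2}(Q_d)$, and the geometric statement follows via Lemma A.1 of \cite{YaoZla}.

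For Lemma \ref{L.2.5} I would repeat the three arguments with $\Gamma$ now of box-counting dimension $\gamma<d$. In part (i), partition $Q_d$ into dyadic cubes $G^d_k$ with $2^{-k}$ small, let $g$ be the conditional average of $f$ on each cube (so $\sum g$-values $=0$), and use Lemma \ref{L.2.8} to see that $T_d^{dk}$ maps each cube of $G^d_k$ onto a single dyadic box, the images disjoint; the ``bad'' cubes are those whose closures meet $\Gamma$, of which there are fewer than $2^{\gamma' k}$ for any $\gamma'\in(\gamma,d)$ once $k$ is large, and each contributes measure $2^{(\gamma'-2d)k}$ to the intersection with a fixed target box of volume $2^{-2dk}$, hence a relative error $2^{(\gamma'-d)k}\to 0$. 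This gives $\kappa$-mixing to scale $2^{-(n-m)/2}$ (the vertical strip width after $n$ steps starting from cubes of side $2^{-n/2}$ is still $\sim 2^{-n/2}$ in the $x_d$-direction, so ball averages of radius $2^{-(n-m)/2}$ are controlled by cube averages — the same geometric reduction as before). Parts (ii) and (iii) are identical substitutions: replace $\kappa'\|f\|_\infty$ by $C2^{-\alpha(k-1)}$ resp.\ use the Littlewood–Paley-type decomposition $f=\sum a_n\phi_n$ over $\mathbb N^d$ with $\|\phi_n\|_\infty\le1$, $\|\nabla\phi_n\|_\infty\le C|n|$, truncate at $|n|\le 2^{k/3}$, and bound the tail in $L^2$ as in \eqref{2.4}; the exponents $\min\{\alpha,d-\gamma\}$ and the $\sigma$-dependent rate emerge exactly as in the two-dimensional computation, using equivalence of the $\dot H^{-1/2}$-norm and the mix-norm \eqref{1.2}.

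I do not expect any genuine obstacle; the only point requiring a moment's care is the exact exponent accounting — tracking that $T_d^l$ expands horizontally by $2^{(d-1)l}$ and contracts vertically by $2^{(d-1)l}$, so that $l:=k$ steps flatten a box of $G^d_k$ and a further $l$ with $(d-1)l = n-k$ spread it out, forcing the threshold $n\gtrsim dk$ rather than $n\gtrsim 2k$. This changes constants in the exponential rates but not their existence or their dependence only on $\min\{\alpha,d-\gamma\}$ resp.\ $\sigma$. Everything else — the approximation arguments, the use of Lemma A.1 of \cite{YaoZla}, the mix-norm equivalence — is dimension-agnostic and carries over unchanged.
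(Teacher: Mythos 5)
Your approach is the right one, and it is the one the paper intends: the paper supplies no proof of this lemma beyond noting that Lemma~\ref{L.2.8} replaces Lemma~\ref{ImageOfHorizontal} and the two-dimensional proofs carry over. Your transcription of Lemmas~\ref{UniversalDiscrete}, \ref{L.2.2}, and \ref{L.2.5}(i,ii) is sound (modulo small arithmetic slips: $T_d^n(Q')\cap Q''$ is generally a \emph{union} of dyadic boxes rather than a single one once $d>2$, and the quoted last-coordinate width $2^{-(2d-1)k}$ is inconsistent with the correct total measure $2^{-2dk}=|Q'||Q''|$; similarly, ``each contributes measure $2^{(\gamma'-2d)k}$'' should be the total bad-cube contribution, each contributing $2^{-2dk}$ — but the final relative error $2^{(\gamma'-d)k}$ is right, so these do not affect the argument).

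There is, however, a genuine gap in your adaptation of part~(iii). You keep the cutoff $|n|\le 2^{k/3}$ and claim the $\sigma$-dependent rate ``emerges exactly as in the two-dimensional computation.'' But the Cauchy--Schwarz step produces the factor $\bigl(\sum_{|n|\le M}|n|^2\bigr)^{1/2}\sim M^{(d+2)/2}$ in $d$ dimensions, not $M^2$; with $M=2^{k/3}$ the truncated-sum bound becomes $2^{1-k}\cdot 2^{(d+2)k/6}=2^{1+k(d-4)/6}$, which decays only for $d\le 3$, is constant for $d=4$, and grows for $d\ge 5$. You must instead take $M=2^{ck}$ with $c<2/(d+2)$ (and then balance $2^{1-k(1-c(d+2)/2)}$ against the tail $\|f-f_k\|_{L^2}\lesssim 2^{-ck\sigma}\|f\|_{H^\sigma}$), which still gives an exponential, $\sigma$-dependent rate, but with a $d$-dependent cutoff exponent. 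As written, your proof of part~(iii) only works for $d\in\{2,3\}$.
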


%In fact, the rates in Lemma  \ref{L.2.5} can be improved because of the factor $d-1$ in Lemma \ref{L.2.8}(i).

\section{Construction of the Relevant Velocity Fields}\label{S3}

Let us first consider the no-flow boundary conditions case.  In this case we will construct time-periodic velocity fields realizing the maps $T_d$ from the previous section as their flow maps at the time equal to a single period.  Later we will show how to modify our construction when the boundary conditions are periodic.

Again we will start with the $d=2$ case.  Note that the crucial map $T$ is obtained by a $90^\circ$ rotation of the right and left halves of $Q_2$ in the clockwise and counter-clockwise directions, respectively, followed by a $90^\circ$ counterclockwise rotation of $Q_2$.  We therefore just need to find a divergence free field $u$ that rotates a square by $90^\circ$, and then easily adjust it so it rotates rectangles.  This was done by Yao and the second author in \cite{YaoZla}, but we will redo and slightly alter their construction here as we also want to show that $u(\cdot,t)\in W^{s,p}(Q_2)$ for some $s>1$.  Additionally, since the rotating flow will be time-independent, we will omit $t$ below.

\subsection{Rotating velocity fields on $Q_2$}

%We will now construct a time 1-periodic velocity field $u$ on $Q_2$ whose associated flow map at time $t=1$ is the map $T$ from the previous section. 
%%Toward this end, we first construct a velocity field $\tilde u$ whose flow-map satisfies: 
%%, we first define $d:Q\times Q\rightarrow\mathbb{R}$ by: \[d(x,y)=\inf_{p\in P}\|(x,y)-p\|\] with $P=\{(0,0), (1,0),(0,1), (1,1)\}$ and $f:[0,\infty)\rightarrow\mathbb{R}$ a smooth concave function with $f(d)=5d$ for $d<0.1$ and $f(d)=1$ for $d>0.2.$  
%Note that $T$ is obtained by a $90^\circ$ rotation of the right and left halves of $Q_2$ in the clockwise and counter-clockwise directions, respectively, followed by a 90 degree counterclockwise rotation of $Q_2$.  We therefore just need to find a divergence free field $u$ that rotates a square by 90 degrees, and then easily adjust it so it rotates rectangles.  This was done by Yao and the second author in \cite{YaoZla}, but we will redo and slightly alter their construction here as we also want to show that $u(\cdot,t)\in W^{s,p}(Q_2)$ for some $s>1$.  Additionally, since the rotating flow will be time-independent, we will omit $t$ below.

%Following the ideas from \cite{YaoZla}, we 
For $\alpha\ge 0$, let us consider the stream function $\psi_\alpha:Q_2\rightarrow \mathbb{R}$ given by
\[
\psi_\alpha(x,y) := 2^\alpha \frac{\sin(\pi x)\sin(\pi y)}{(\sin(\pi x)+\sin(\pi y))^{\alpha}}\in \left(0, 1 \right]
\] 
It is easy to show that $\log \psi_\alpha$ is concave (see Lemma \ref{L.3.1}(i) below), so the level sets of $\psi_\alpha$ are curves which foliate $Q_2$. We also define the quantity 
\[
T_{\psi_\alpha}(r):=\int_{\{\psi_\alpha=r\}}\frac{1}{|\nabla\psi_\alpha|}d\sigma,
\] 
which is the time it takes a particle from the level curve $\{\psi_\alpha=r\}$ to traverse this level curve if it moves with  the (divergence-free) velocity $v_\alpha:=\nabla^\perp\psi_\alpha$.  As in \cite{YaoZla}, we will next find another function $\psi^\alpha$ with the same level sets as $\psi_\alpha$ but with the ``period'' $T_{\psi^\alpha}(r)$ independent of $r$ (e.g., equal to 1).  Four-fold rotational symmetry of $\psi_\alpha$ around $(\frac{1}{2},\frac{1}{2})$ will then show that the flow $v^\alpha=\nabla^\perp\psi^\alpha$ rotates $Q_2$ by $90^\circ$ after a quarter of the period $T_{\psi^\alpha}$.

Let us start with some properties of $\psi_\alpha$.

\begin{lemma} \label{L.3.1}
For any $\alpha\in (0,1)$ and $k\in\mathbb{N}\cup\{0\}$ there is $C_{\alpha,k}>0$ such that the following hold.
% the function $\psi_\alpha$ defined above has the following properties:
\begin{enumerate}[(i)]
\item $\log \psi_\alpha$ is concave, so the super-level sets of $\psi_\alpha$ are convex.
\item $|D^k\psi_\alpha(x,y)|\leq C_{\alpha,k} (\sin(\pi x)+\sin(\pi y))^{2-\alpha-k}$ for each $(x,y)\in Q_2$.
\item $T_{\psi_\alpha}\in C^2_{\rm loc}((0,1))$, and for $k=0,1,2$ and each $r\in (0,1)$ we have
\[
\left| T^{(k)}_{\psi_\alpha}(r) \right| 
%\sup_{\psi_\alpha(x,y)=s} |\nabla\psi_\alpha(x,y)|^{2k}
\leq C_{\alpha,k} \,\, (1-r)^{-k} (1+r^{\frac{\alpha}{2-\alpha}-k}). 
\] 
\end{enumerate}
\end{lemma}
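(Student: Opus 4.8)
The plan is to analyze the three claims of Lemma \ref{L.3.1} in order, since each builds naturally on the previous one and all reduce to elementary (if somewhat delicate) estimates for the explicit function $\psi_\alpha(x,y)=2^\alpha\sin(\pi x)\sin(\pi y)(\sin(\pi x)+\sin(\pi y))^{-\alpha}$.

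\textbf{Part (i).} To show $\log\psi_\alpha$ is concave on $Q_2$, I would write $\log\psi_\alpha(x,y) = \alpha\log 2 + \log\sin(\pi x) + \log\sin(\pi y) - \alpha\log(\sin(\pi x)+\sin(\pi y))$. The first three terms are concave (since $\log\sin$ is concave on $(0,\pi)$ and sums/affine reparametrizations preserve concavity), so it suffices to show $(x,y)\mapsto -\log(\sin(\pi x)+\sin(\pi y))$ is concave, i.e. that $g(x,y):=\sin(\pi x)+\sin(\pi y)$ satisfies: $\log g$ is convex. Actually the cleaner route is to note $g$ is concave (sum of concave functions) and positive, and for a positive function $\log(\cdot)$ of a concave function need not be convex in general — so instead I would compute the Hessian of $\log g$ directly. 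Setting $a=\sin\pi x$, $b=\sin\pi y$, $a'=\pi\cos\pi x$, etc., the mixed structure is diagonal in the cross term, and one checks $\partial_{xx}\log g = (a''g - (a')^2)/g^2$ with $a'' = -\pi^2 a < 0$, so each diagonal entry is negative, and the off-diagonal entry is $-a'b'/g^2$; the determinant condition $\partial_{xx}\log g\,\partial_{yy}\log g \ge (\partial_{xy}\log g)^2$ reduces after clearing $g^{-4}$ to a polynomial inequality in $a,b$ and the derivatives that should follow from $a''=-\pi^2 a$, $b''=-\pi^2 b$ and $(a')^2 = \pi^2(1-a^2)$. This is the one genuinely computational sub-step; convexity of super-level sets is then immediate since super-level sets of a concave function are convex.

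\textbf{Part (ii).} The bound $|D^k\psi_\alpha(x,y)|\le C_{\alpha,k}(\sin\pi x+\sin\pi y)^{2-\alpha-k}$ I would prove by a scaling/homogeneity heuristic made rigorous. Near a corner, say $(0,0)$, we have $\sin\pi x\approx \pi x$, $\sin\pi y\approx \pi y$, so $\psi_\alpha\approx 2^\alpha\pi^{2-\alpha} xy(x+y)^{-\alpha}$, which is homogeneous of degree $2-\alpha$ in $(x,y)$; differentiating $k$ times lowers the degree by $k$, giving exactly the claimed power of $(x+y)\sim(\sin\pi x+\sin\pi y)$. To make this uniform over $Q_2$ and all corners, I would cover $Q_2$ by finitely many regions: an interior region where $\sin\pi x+\sin\pi y$ is bounded below and $\psi_\alpha$ is smooth with bounded derivatives (so the bound is trivial), and corner/edge neighborhoods where I Taylor-expand $\sin$ and use the homogeneity of the leading term together with the fact that the correction terms are higher order hence better behaved. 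Concretely, writing $\psi_\alpha = 2^\alpha ab(a+b)^{-\alpha}$ and applying the product and chain rules, every $k$-th derivative is a finite sum of terms of the form $(\text{derivatives of }a)\cdot(\text{derivatives of }b)\cdot(a+b)^{-\alpha-m}$ with the total "weight" matching; each factor $|D^j a|\le\pi^j$ is bounded, and $a\le a+b$, $b\le a+b$, so one collects the powers to land on $(a+b)^{2-\alpha-k}$. The constant depends on $\alpha$ and $k$ through the multinomial coefficients and the $\pi^j$ factors.

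\textbf{Part (iii).} For the period function $T_{\psi_\alpha}(r)=\int_{\{\psi_\alpha=r\}}|\nabla\psi_\alpha|^{-1}\,d\sigma$, I would use the co-area formula to write, for the super-level set $U_r:=\{\psi_\alpha>r\}$ with area $|U_r|=:A(r)$, the identity $A'(r) = -\int_{\{\psi_\alpha=r\}}|\nabla\psi_\alpha|^{-1}d\sigma = -T_{\psi_\alpha}(r)$ — wait, more precisely $\frac{d}{dr}|U_r| = -T_{\psi_\alpha}(r)$, so $T_{\psi_\alpha} = -A'$ and thus $T_{\psi_\alpha}\in C^2_{\rm loc}$ will follow from $A\in C^3_{\rm loc}$, which in turn follows from part (i) (convexity of $U_r$, so the boundary is a nice convex curve depending smoothly on $r$ away from the degenerate endpoints $r\to 0,1$) plus part (ii) (control on $\nabla\psi_\alpha$). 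For the quantitative bounds on $T_{\psi_\alpha}^{(k)}(r)$: as $r\to 1$ the level set shrinks to the point $(\tfrac12,\tfrac12)$ where $\nabla^2\psi_\alpha$ is nondegenerate, so $T_{\psi_\alpha}(r)\to$ const and the $(1-r)^{-k}$ factors come from differentiating the nearly-elliptic level curves $k$ times. As $r\to 0$ the level set hugs the boundary $\partial Q_2$ near the corners; here I would parametrize the piece of $\{\psi_\alpha=r\}$ near corner $(0,0)$ using the homogeneity of the leading term $xy(x+y)^{-\alpha}=c\,r$, which gives a curve at "scale" $r^{1/(2-\alpha)}$, and on it $|\nabla\psi_\alpha|\sim r^{(1-\alpha)/(2-\alpha)}$ while the arclength is $\sim r^{1/(2-\alpha)}$, yielding $T_{\psi_\alpha}(r)\sim r^{1/(2-\alpha)}\cdot r^{-(1-\alpha)/(2-\alpha)} = r^{\alpha/(2-\alpha)}$, matching the stated exponent; differentiating $k$ times in $r$ lowers this by $k$. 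The main obstacle here — and the hardest part of the whole lemma — is making the corner asymptotics of $T_{\psi_\alpha}$ and its first two derivatives rigorous and uniform, i.e. controlling the error between $\psi_\alpha$ and its homogeneous leading term well enough that it does not spoil the power-counting after two differentiations; I expect this requires a careful change of variables (e.g. $(x,y)\mapsto(r^{1/(2-\alpha)}\xi, r^{1/(2-\alpha)}\eta)$ near each corner) that turns the integral into one over an $r$-independent domain with an integrand depending smoothly on $r$, after which $C^2$-in-$r$ bounds follow by differentiating under the integral sign and using part (ii).
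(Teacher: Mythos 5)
Parts (i) and (ii) of your plan match the paper's proof essentially step for step: (i) is established there (in an appendix) by computing the Hessian of $\log\psi_\alpha$ directly and checking that its trace is $\le 0$ and its determinant is $\ge 0$, which is exactly the determinant/trace computation you describe; (ii) the paper dismisses as "immediate from the definition," which is precisely the Leibniz-rule power count you carry out.

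For part (iii) the situation is different, and this is where your proposal has a real gap. You correctly identify the relevant scales (nondegeneracy of $D^2\psi_\alpha$ at $(\tfrac12,\tfrac12)$ governing $r\to 1$, and the $(2-\alpha)$-homogeneous corner model $xy(x+y)^{-\alpha}$ governing $r\to 0$) and the right corner exponent $r^{\alpha/(2-\alpha)}$, but you then stop and say you "expect" a scaling change of variables $(x,y)\mapsto (r^{1/(2-\alpha)}\xi, r^{1/(2-\alpha)}\eta)$ will turn the period integral into a smooth family — that is, the hardest step (the one that actually produces the two-derivative bounds) is announced, not performed. The paper takes a different and more self-contained route: it rewrites $T_{\psi_\alpha}(r) = -\int_{\{\psi_\alpha>r\}}\nabla\cdot\bigl(\nabla\psi_\alpha/|\nabla\psi_\alpha|^2\bigr)\,dx\,dy$ via the divergence theorem, differentiates in $r$ with the coarea formula to get $T'_{\psi_\alpha}$ and $T''_{\psi_\alpha}$ as explicit level-set integrals of iterated divergences, and then bounds those integrands pointwise using (ii) together with the lower bound $|\nabla\psi_\alpha|\gtrsim(\sin\pi x+\sin\pi y)^{1-\alpha}$ near $\partial Q_2$ and the quadratic nondegeneracy near $(\tfrac12,\tfrac12)$. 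No scaling change of variables or uniformity-in-$r$ argument is needed; everything reduces to a one-dimensional integral over the level curve. This is decidedly the simpler approach here, and you would need to supply the analogue of these estimates for your plan to close.

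One further inaccuracy worth flagging in your sketch: you write that the corner scaling "yields $T_{\psi_\alpha}(r)\sim r^{\alpha/(2-\alpha)}$." That is only the \emph{corner} contribution to the period integral. The portions of the level curve near the edges but away from the corners contribute an $O(1)$ amount (there $|\nabla\psi_\alpha|$ is bounded below and the arclength is $O(1)$), so $T_{\psi_\alpha}(r)$ is in fact bounded and bounded away from $0$ as $r\to 0$ — consistent with the paper's bound $T_{\psi_\alpha}(r)\le C_\alpha/\alpha$ and with the $1+r^{\alpha/(2-\alpha)-k}$ form of the stated estimate. Because the $r^{\alpha/(2-\alpha)}$ term is a correction and not the leading order, any argument that differentiates a scaling-model approximation to $T_{\psi_\alpha}$ must also control the derivatives of the $O(1)$ bulk contribution, which your plan does not address and which is not automatic.

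Finally, a small point about $T_{\psi_\alpha}=-A'$ with $A(r)=|\{\psi_\alpha>r\}|$: this is correct, but it shifts the burden to proving $A\in C^3_{\rm loc}$, which is no easier than proving $T_{\psi_\alpha}\in C^2_{\rm loc}$ directly; the paper's divergence-theorem identity is effectively a closed-form way of writing $A'$ that makes the higher derivatives tractable.
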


\begin{proof}
(i) is a direct calculation (see the appendix) and   
(ii) is immediate from the definition.
% of $\psi_\alpha$. 

(iii) Since $\psi$ is smooth and $|\nabla\psi|$ is non-zero away from $\partial Q_2\cup\{(\frac 12,\frac 12)\}$, we only need to consider $s$ close to $0$ and $1$.
%Obviously there is $C$ such that 

For all $s$ close to $1$ we have
 \begin{equation} \label{3.1}
\sqrt {1-r} \approx |\{\psi_\alpha=r\}| \approx |\nabla\psi_\alpha(x,y)|\approx \left|(x,y)-\left(\frac{1}{2}, \frac{1}{2} \right) \right| \qquad \text{ for all $(x,y)\in\{\psi_\alpha=r\}$},
 \end{equation}
where $A\approx B$ means that there exists a constant $C$, independent of $(x,y)$ as well as of $s$ near $1$, such that $\frac{1}{C}A\leq B\leq C A$.  Indeed, this follows from $\psi_\alpha$ having a maximum at $(\frac{1}{2},\frac{1}{2})$ with $D^2\psi_\alpha(\frac{1}{2},\frac{1}{2})$ a non-zero multiple of the identity matrix, and from uniform bounds on the third derivatives of $\psi_\alpha$ near $(\frac{1}{2},\frac{1}{2})$. 
%of $\psi_\alpha$ while the second follows from the first using the arc-length formula. 
It implies, in particular, uniform boundedness of $T_{\psi_\alpha}$ near $1$, which yields the claim for $k=0$ and all $r$ near $1$. 

Let us next study the derivatives of $T_{\psi_\alpha}$ near $1$.  As in \cite{YaoZla}, let us write
\[
T_{\psi_\alpha}(r)=\int_{\{\psi_\alpha=r\}}\frac{1}{|\nabla\psi_\alpha|}d\sigma=\int_{\{\psi_\alpha=r\}}(-n)\cdot \frac{\nabla\psi_\alpha}{|\nabla\psi_\alpha|^2}d\sigma=-\int_{\psi_\alpha>r}\nabla\cdot\left( \frac{\nabla\psi_\alpha}{|\nabla\psi_\alpha|^2}\right) dxdy.
\]
Thus 
\[
T_{\psi_\alpha}'(r)=\int_{\{\psi_\alpha=r\}}\frac{1}{|\nabla\psi_\alpha|} \nabla\cdot\left(\frac{\nabla\psi_\alpha}{|\nabla\psi_\alpha|^2}\right)d\sigma.
\]
From this, \eqref{3.1}, and (ii) for $k=2$ we now obtain
%Recall also that there is $C_\alpha$ such that
%\[
%\frac{\sup_{\{\psi_\alpha(x,y)=s\}}|\nabla\psi_\alpha(x,y)|}{\inf_{\{\psi_\alpha(x,y)=s\}}|\nabla\psi_\alpha(x,y)|}\leq C_\alpha
%\]
% for all $s$  close to $2^{-\alpha}$.  Thus, using the bounds on the derivatives of $\psi_\alpha$ we have, for all $s$ away from $0$:
for all $r$ near $1$,
\[
 \left|T'_{\psi_\alpha}(r) \right| 
 %\sup_{\psi_\alpha(x,y)=s} |\nabla\psi_\alpha(x,y)|^2
 \leq C_{\alpha}(1-r),
\] 
with some constant $C_{\alpha}$ (depending only on $\alpha$).  Similarly we obtain
\[
T_{\psi_\alpha}''(r)=\int_{\{\psi_\alpha=r\}}\frac{1}{|\nabla\psi_\alpha|}\nabla\cdot\left(\frac{\nabla\psi_\alpha}{|\nabla\psi_\alpha|^2} \nabla\cdot\left(\frac{\nabla\psi_\alpha}{|\nabla\psi_\alpha|^2}\right) \right)d\sigma,
\] 
and then  for all $r$ near $1$,
\[ 
\left| T''_{\psi_\alpha}(r)\right| 
%\sup_{\psi_\alpha(x,y)=s} |\nabla\psi_\alpha(x,y)|^{4}
\leq C_{\alpha}' (1-r)^2,
\]
with some constant $C_{\alpha}'$.  Hence the claim for $k=1,2$ and all $r$ near $1$ also follows.

Let us now consider $r$ near 0.  A simple computation gives the lower bound 
\[
|\nabla\psi_\alpha(x,y)|\geq c_\alpha (\sin(\pi x)+\sin(\pi y))^{1-\alpha}
\] 
for all such $r$ and all $(x,y)$ with $\psi_\alpha(x,y)=r$. In particular, 
\[
T_{\psi_\alpha}(r)\leq \int_{\{\psi_\alpha=r\}}\frac{d\sigma}{c_\alpha(\sin(\pi x)+\sin(\pi y))^{1-\alpha}}.
\]
% Now, using the symmetry $x\leftrightarrow y$ as well as even symmetry about the line $x=\frac{1}{2}$,  
Since $\psi_\alpha$ and $\sin(\pi x)+\sin(\pi y)$ have all the symmetries of $Q_2$, it suffices to consider 8 times the last integral restricted to 
%we may just compute the time it takes for a point to travel from the line 
the part of the curve $\{\psi_\alpha=r\}$ between the lines $y=x$ and $x=\frac{1}{2}$ (where $\sin(\pi x)\ge\sin(\pi y)$).   For $r$ near 0 and $x$ near 0 such that  $\psi_\alpha(x,x)=r$ we have $\sin(\pi x)^{2-\alpha}= r$, so that $x\ge \frac 1{10}r^{\frac{1}{2-\alpha}}$.  It follows that for some $C_\alpha$ and all $r$ near 0 we have
\[
T_{\psi_\alpha}(r)\leq C_\alpha \int_{0.1r^{\frac{1}{2-\alpha}}}^{\frac{1}{2}} \frac{ dx}{x^{1-\alpha}}\leq \frac{C_\alpha}\alpha,
%(1+s^{\frac{\alpha}{2-\alpha}})
\]
which yields the claim for $k=0$ and all $r$ near 0.
 
We now proceed as in the case $r$ near 1. From
\[
T_{\psi_\alpha}'(r)=\int_{\{\psi_\alpha=s\}}\frac{1}{|\nabla\psi_\alpha|} \nabla\cdot\Big(\frac{\nabla\psi_\alpha}{|\nabla\psi_\alpha|^2}\Big) d\sigma
\]
and (ii) we obtain for $r$ near 0 (with some constants $C_\alpha, C_\alpha', C_\alpha''$),
\[
|T_{\psi_\alpha}'(r)|\leq \int_{\{\psi_\alpha=r\}}C_{\alpha}\frac{(\sin(\pi x)+\sin(\pi y))^{-\alpha}}{|\nabla\psi_\alpha|^3} d\sigma \leq C_\alpha' \int_{0.1 r^{\frac{1}{2-\alpha}}}^{\frac{1}{2}}\frac{1}{(\sin(\pi x))^{3-2\alpha}} d\sigma \le C_{\alpha}'' r^{\frac{\alpha}{2-\alpha}-1}.
\]
%Therefore, \[|T'_{\psi_\alpha}(s)|\leq C_{\alpha}s^{-\frac{2-2\alpha}{2-\alpha}}=C_{\alpha,1} s^{\frac{\alpha}{2-\alpha}-1}.\]
Similarly,
%Now we observe
%, just like in the case where $s$ was close to $\frac{1}{2^\alpha}$, 
from the formula for $T_{\psi_\alpha}''$, lower bound on $|\nabla\psi_\alpha|$, and (ii) we obtain for $r$ near 0,
%that each derivative of $s$ only adds one more derivative as well as a factor of $\frac{1}{|\nabla\psi_\alpha|}$ which total to an additional factor of $\frac{1}{(\sin(\pi x)+\sin(\pi y))^{2-\alpha}}$ for each $s$ derivative\footnote{One must consider two cases: when spatial derivatives fall on $|\nabla\psi|$ which then introduces a factor $\frac{D^2\psi}{|\nabla\psi|^2}$ which is like $\frac{1}{(\sin(\pi x))^{2-\alpha}}$and when spatial derivatives fall on higher order derivatives which then introduces the same factor.}. This then gives: 
\[
|T_{\psi_\alpha}''(r)|\leq C_{\alpha} \int_{0.1 r^{\frac{1}{2-\alpha}}}^{\frac{1}{2}} \frac{dx}{(\sin(\pi x))^{5-3\alpha}}\le C_{\alpha}' r^{\frac{\alpha}{2-\alpha}-2}.
\] 
%where we note that the exponent in the integrand can never be $1$ for $\alpha\in (0,1)$.    
Hence the claim for $k=1,2$ and all $r$ near $0$ also follows, and we are done.
We note that one can continue to higher derivatives and, in particular, obtain that $T_{\psi_\alpha}\in C^\infty_{\rm loc}((0,1))$.
\end{proof}

%\subsection{Fixing the Period}
We now define the function $\psi^\alpha$ with the same level sets as $\psi_\alpha$ but with $T_{\psi^\alpha}(r)$ independent of $r$.  Because we want to address the question of its (fractional) regularity, let us first define the fractional Sobolev spaces.

\begin{definition}
For $\gamma\in(0,1)$ and for $f\in C^1(Q_2)$, let 
\[
\Lambda^\gamma(f)(x,y)=\int_{Q_2}\frac{f(x,y)-f(x',y')}{|(x,y)-(x',y')|^{2+\gamma}}dx'dy'.
\] 
 For $p\ge 1$, define the $W^{\gamma,p}$ norm of $f$ by $\|f\|_{W^{\gamma,p}(Q_2)}:=\|f\|_{L^p(Q_2)} + \|\Lambda^\gamma(f)\|_{L^p(Q_2)}.$ 
%For $0<\gamma<1$, 
The space $W^{\gamma,p}(Q_2)$ is the completion of $C^1(Q_2)$ with respect to this norm. We also let $W^{0,p}(Q_2):=L^p(Q_2)$ and for $\gamma\ge 1$ we say that $f\in W^{\gamma,p}(Q_2)$ if and only if $f, Df,\dots,D^{\lfloor\gamma\rfloor}f\in W^{\gamma-\lfloor\gamma\rfloor,p}(Q_2).$ 
%Our goal now is to determine which $W^{\gamma,p}$ spaces our velocity field $u$ belongs to. The following lemma will be useful for this purpose. 
\end{definition}

\begin{lemma} \lb{L.3.2}
For any $\alpha\in (0,1)$ there exists $\psi^\alpha\in W^{1,\infty}(Q_2)\cap W^{2,\infty}_{\rm loc}(Q_2) $ with the following properties.
\begin{enumerate}[(i)]
\item $\psi^\alpha$ has the same level sets as $\psi_\alpha$, with $\psi^\alpha>0$ on $Q_2$ and $\psi^\alpha=0$ on $\partial Q_2$.
%, while the sub-level sets of $\psi^\alpha$ are convex and symmetric with respect to $x\leftrightarrow y$ and reflection about the line $x=\frac{1}{2}$. 
\item $T_{\psi^\alpha}(r)=1$ for all $r\in (0,\|\psi^\alpha\|_{L^\infty}).$
\item For any $\gamma\in[0,1]$ we have $\psi^\alpha\in W^{2+\gamma,p}(Q_2)$ whenever $1\le p<(\max\{\frac{\alpha}{2},\frac{2-2\alpha}{2-\alpha}\}+\gamma)^{-1}$.
\end{enumerate} 
\end{lemma}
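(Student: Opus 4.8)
The plan is to take $\psi^\alpha:=\Theta\circ\psi_\alpha$, where $\Theta(r):=\int_0^r T_{\psi_\alpha}(\sigma)\,d\sigma$. By Lemma~\ref{L.3.1}(iii) with $k=0$ the integrand is bounded on $[0,1]$ (since $\tfrac{\alpha}{2-\alpha}>0$), so $\Theta$ is Lipschitz there; since $T_{\psi_\alpha}>0$ it is strictly increasing, hence a homeomorphism of $[0,1]$ onto $[0,\Theta(1)]$ fixing $0$, and since $\Theta'=T_{\psi_\alpha}\in C^2_{\rm loc}((0,1))$ we get $\Theta\in C^1([0,1])\cap C^3_{\rm loc}((0,1))$, with $\Theta''=T'_{\psi_\alpha}$ possibly unbounded as $r\to 0$ (as $|T'_{\psi_\alpha}(r)|\lesssim r^{\alpha/(2-\alpha)-1}$). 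Then $\psi^\alpha$ has the same level sets as $\psi_\alpha$, is positive on $Q_2$ and vanishes on $\partial Q_2$, which is (i). For (ii), differentiate the composition: $\nabla\psi^\alpha=\Theta'(\psi_\alpha)\nabla\psi_\alpha=T_{\psi_\alpha}(\psi_\alpha)\nabla\psi_\alpha$, so on the level curve $\{\psi^\alpha=\Theta(\rho)\}=\{\psi_\alpha=\rho\}$ (on which $\nabla\psi_\alpha\ne0$ for $\rho\in(0,1)$, by Lemma~\ref{L.3.1}(i) and the lower bound on $|\nabla\psi_\alpha|$ from the proof of Lemma~\ref{L.3.1}(iii)) the speed is the old one times the \emph{constant} $T_{\psi_\alpha}(\rho)$, hence
\[
T_{\psi^\alpha}(\Theta(\rho))=\int_{\{\psi_\alpha=\rho\}}\frac{d\sigma}{T_{\psi_\alpha}(\rho)\,|\nabla\psi_\alpha|}=\frac{T_{\psi_\alpha}(\rho)}{T_{\psi_\alpha}(\rho)}=1 .
\]

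For (iii), I would first dispose of the regions where nothing happens. Boundedness of $\Theta'$ and $|\nabla\psi_\alpha|\le C_{\alpha,1}(\sin\pi x+\sin\pi y)^{1-\alpha}$ from Lemma~\ref{L.3.1}(ii) give $\nabla\psi^\alpha\in L^\infty(Q_2)$, so $\psi^\alpha\in W^{1,\infty}(Q_2)$. On any compact $K\subset Q_2$, $\psi_\alpha$ is smooth and takes values in an interval $[c,1]$ on which $\Theta\in C^3$ (near $r=1$ using the sharper estimates $|T'_{\psi_\alpha}|\lesssim 1-r$, $|T''_{\psi_\alpha}|\lesssim (1-r)^2$ obtained inside the proof of Lemma~\ref{L.3.1}(iii)), so $\psi^\alpha\in C^3(K)\subset W^{2,\infty}(K)$; this gives $\psi^\alpha\in W^{2,\infty}_{\rm loc}(Q_2)\cap W^{2+\gamma,p}_{\rm loc}(Q_2)$. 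Also $\psi^\alpha$ extends to an element of $C^1(\overline{Q_2})$ by a similar computation, so it suffices to show finiteness of $\|\psi^\alpha\|_{W^{2+\gamma,p}}$ on a neighborhood $\mathcal N$ of $\partial Q_2$.

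On $\mathcal N$ I would use the chain rule
\[
D^2\psi^\alpha=\Theta''(\psi_\alpha)\,\nabla\psi_\alpha\otimes\nabla\psi_\alpha+\Theta'(\psi_\alpha)\,D^2\psi_\alpha
\]
and its analogue for $D^3\psi^\alpha$, and feed in: the bounds $|D^k\psi_\alpha|\le C_{\alpha,k}\phi^{2-\alpha-k}$ of Lemma~\ref{L.3.1}(ii) with $\phi:=\sin\pi x+\sin\pi y$; the bounds $|\Theta'|\lesssim 1$, $|\Theta''(r)|\lesssim r^{\beta-1}$, $|\Theta'''(r)|\lesssim r^{\beta-2}$ near $r=0$ with $\beta:=\tfrac{\alpha}{2-\alpha}\in(0,1)$ (Lemma~\ref{L.3.1}(iii)); and elementary estimates relating $\psi_\alpha$, $\phi$, and $\operatorname{dist}(\cdot,\partial Q_2)$ (in particular $\psi_\alpha\le 2^{\alpha-2}\phi^{2-\alpha}$ by AM--GM, $\psi_\alpha\approx\operatorname{dist}(\cdot,\partial Q_2)=:t$ with $\phi\approx1$ near an edge away from the corners, and $\psi_\alpha\approx\phi^{2-\alpha}$ inside a fixed cone about each corner). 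I then split $\mathcal N$ into four ``edge pieces'' (near an open edge, away from corners) and four ``corner pieces''. On an edge piece, after flattening the edge $\psi^\alpha$ equals a function of the normal coordinate $t$ of the form $(\mathrm{const})\,t+O(t^{1+\beta})$ plus a remainder smooth up to the boundary, whose only singular parts are $\partial_t^2\psi^\alpha=O(t^{\beta-1})$ and $\partial_t^3\psi^\alpha=O(t^{\beta-2})$; estimating the $L^p$ norm and $\Lambda^\gamma(\partial_t^2\psi^\alpha)$ (the latter reducing to the one-dimensional fractional operator in $t$, split according to whether $|z-w|$ is smaller or larger than $t(z)/2$) shows membership in $W^{2+\gamma,p}$ of the edge piece for $p<\big(\tfrac{2-2\alpha}{2-\alpha}+\gamma\big)^{-1}$. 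On a corner piece, writing $x=\rho u$, $y=\rho(1-u)$ one finds $|D^2\psi^\alpha|\lesssim(u(1-u))^{\beta-1}+\rho^{-\alpha}$ — the first ``angular'' term from $\Theta''(\psi_\alpha)\nabla\psi_\alpha\otimes\nabla\psi_\alpha$, the second from $\Theta'(\psi_\alpha)D^2\psi_\alpha$ via $|D^2\psi_\alpha|\lesssim\phi^{-\alpha}$ — and similarly for $D^3\psi^\alpha$; integrating the $p$-th power over the two-dimensional piece (Jacobian $\rho$) and likewise controlling $\Lambda^\gamma(D^2\psi^\alpha)$ there gives finiteness for $p<\big(\tfrac{\alpha}{2}+\gamma\big)^{-1}$ (the angular term reproducing only the edge-type constraint already imposed). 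Intersecting all the constraints — the interior and the center contributing nothing — yields $\psi^\alpha\in W^{2+\gamma,p}(Q_2)$ for $1\le p<\big(\max\{\tfrac{\alpha}{2},\tfrac{2-2\alpha}{2-\alpha}\}+\gamma\big)^{-1}$, proving (iii).

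The main obstacle is the last step, in two respects. First, one must separate the two genuinely different sources of the boundary singularity of $D^2\psi^\alpha$: the failure of the time change $\Theta$ to be $C^2$ at $r=0$, which through $\Theta''(\psi_\alpha)\nabla\psi_\alpha\otimes\nabla\psi_\alpha$ creates an ``edge-transversal'' blow-up $\psi_\alpha^{\beta-1}$ even where $\psi_\alpha$ itself is smooth, versus the intrinsic corner singularity $D^2\psi_\alpha\sim\phi^{-\alpha}$ of $\psi_\alpha$; these scale differently and account for the two entries of the maximum. Second, and more seriously, one must upgrade the pointwise derivative bounds to bounds on $\|\Lambda^\gamma(D^2\psi^\alpha)\|_{L^p}$, a nonlocal quantity that does not respect the edge/corner decomposition, which forces a careful dyadic-in-distance splitting of the defining integral near $\partial Q_2$ together with the one-dimensional reduction of $\Lambda^\gamma$ near the open edges. (The stated range of $p$ need not be optimal near the corners, but it is clean and, in particular, includes points with $2+\gamma>1$ and $p>2$, which is all that is needed downstream.)
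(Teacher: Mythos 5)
Your proposal is correct and follows essentially the same route as the paper: the same definition $\psi^\alpha=\int_0^{\psi_\alpha}T_{\psi_\alpha}$, the same chain-rule bounds on $D^2\psi^\alpha$ and $D^3\psi^\alpha$ via Lemma~\ref{L.3.1}, the same interior argument via Sobolev embedding, and the same final pointwise blow-up rates near $\partial Q_2$. The only organizational difference is that the paper absorbs your entire edge/corner and dyadic-in-distance analysis of $\Lambda^\gamma(D^2\psi^\alpha)$ into the single black-box estimate of Corollary~\ref{fractionalsines} (proved in the Appendix via exactly the $|x-y|\lessgtr\tfrac12 x_1$ split you describe), which yields the clean bound $|\Lambda^\gamma D^2\psi^\alpha|\lesssim|\sin(\pi x)\sin(\pi y)|^{-\max\{\alpha/2,(2-2\alpha)/(2-\alpha)\}-\gamma-\epsilon}$ and then integrates.
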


{\it Remark.}
Optimal regularity in (iii) is thus obtained when $\frac{\alpha}{2}=\frac{2-2\alpha}{2-\alpha}$, that is, by taking $\alpha_*:=3-\sqrt{5}.$  This yields $\psi^{\alpha_*}\in {W^{2+\gamma,p}(Q_2)}$ for all $\gamma\in[0,\frac{\sqrt 5 -1}2)$ and $p\in[1,\frac{2}{3-\sqrt{5}+2\gamma}).$ 
%So that $D^2\psi^\alpha\in W^{\frac{\sqrt{5}-2}{2}-\epsilon,1}\cap L^{\frac{3+\sqrt{5}}{2}-\epsilon}$ for all $\epsilon>0$. 

\begin{proof}
Let
\beq \lb{3.1a}
\psi^\alpha(x,y):=\int_0^{\psi_\alpha(x,y)}T_{\psi_\alpha}(r)dr,
\eeq
from which (i) follows immediately.
% Then $\psi^\alpha$ has the same level sets as $\psi_\alpha$ so property 1 holds. Observe that 
Since $\nabla\psi^\alpha(x,y)= T_{\psi_\alpha}(\psi_\alpha(x,y)) \nabla\psi_\alpha(x,y)$, Lemma \ref{L.3.1} yields $\psi^\alpha\in W^{1,\infty}$ and we also obtain (ii).
%This immediately implies both that $\psi^\alpha\in W^{1,\infty}$ and that $T_{\psi^\alpha}(s)=1$ for all $s\in [0,\|\psi^\alpha\|_{L^\infty}].$ Now we will look to the second and third derivatives of $\psi^\alpha$.
It remains to show (iii).
 
We see that \[|D^2\psi^\alpha|\leq |D^2\psi_\alpha| T_{\psi_\alpha}(\psi_\alpha)+|\nabla\psi_\alpha|^2|T'_{\psi_\alpha}(\psi_\alpha)|,\]
and \[|D^3\psi^\alpha|\leq |D^3\psi_\alpha|T_{\psi_\alpha}(\psi_\alpha)+3|D^2\psi_\alpha||\nabla\psi_\alpha||T'_{\psi_\alpha}(\psi_\alpha)|+|\nabla\psi_\alpha|^3 |T''_{\psi_\alpha}(\psi_\alpha)|.\] Lemma \ref{L.3.1} and \eqref{3.1} show that away from $\partial Q_2$, the only unbounded terms may be the second and third term in the estimate for $D^3\psi^\alpha$, and they are both bounded above by ${C_\alpha}|(x,y)-(\frac 12,\frac 12)|^{-1}$ there.
% (with $\nabla\psi_\alpha$ only vanishing at $(\frac{1}{2},\frac{1}{2})$ inside $Q_2$, to the first order). 
 Thus $D^2\psi^\alpha$ is bounded away from $\partial Q_2$ (proving $\psi^\alpha\in W^{2,\infty}_{\rm loc}$), while $D^3\psi^\alpha$ is in $L^p$ for any $p<2$ there.
%First let us note that near the center point $(\frac{1}{2},\frac{1}{2}),$ where $\psi_\alpha$ itself is $C^\infty$ smooth, the only terms which may be unbounded are the second and third in the estimate of $D^3\psi$ and they both can be controlled by $\frac{C_\alpha}{|\nabla\psi_\alpha|}$ near $(\frac{1}{2},\frac{1}{2})$. This is almost in $L^2$ and is far more regular than the other terms we will have to deal with. Thus, let us focus our attention near the level set $s=0$. 
Sobolev embedding now shows that $D^2\psi^\alpha\in W^{\gamma,p}$ away from $\partial Q_2$ whenever $p<\frac 2\gamma$.

It therefore suffices to consider the neighborhood of $\partial Q_2$.
From Lemma \ref{L.3.1} we see that there
\[
|D^2\psi^\alpha(x,y)|\leq C_\alpha\Big(|\sin(\pi x)+\sin(\pi y)|^{-\alpha}+|\psi_\alpha|^{\frac{2\alpha-2}{2-\alpha}}\Big) \leq 2C_\alpha |\sin(\pi x)\sin(\pi y)|^{-\max\{\frac{\alpha}{2},\frac{2-2\alpha}{2-\alpha}\}}
\] 
%\[
%\leq C_\alpha \Big(|\sin(\pi x)+\sin(\pi y)|^{-\alpha}+|\sin(\pi x)\sin(\pi y)|^{\frac{2\alpha-2}{2-\alpha}}\Big)\] \[\leq C_\alpha|\sin(\pi x)\sin(\pi y)|^{-\max\{\frac{\alpha}{2},\frac{2-2\alpha}{2-\alpha}\}},\] 
%where we used that 
%\[|\psi_\alpha(x,y)|\geq |\sin(\pi x)\sin(\pi y)|\] and \[|\nabla\psi_\alpha|\leq C_\alpha,\] 
%as well as the inequality:
%\[\sin(\pi x)+\sin(\pi y)\geq 2\sqrt{\sin(\pi x)\sin(\pi y)}\] on $Q_2$. 
for some $C_\alpha$.
Similarly, near $\partial Q_2$ we obtain 
\[
|D^3\psi^\alpha(x,y)|\leq C_\alpha|\sin(\pi x)\sin(\pi y)|^{-\max\{\frac{\alpha+1}{2}, \frac \alpha 2 + \frac{2-2\alpha}{2-\alpha}, \frac{4-3\alpha}{2-\alpha}\}}=C_\alpha|\sin(\pi x)\sin(\pi y)|^{-\frac{4-3\alpha}{2-\alpha}}
\] 
because $\alpha\in[0,1]$.  
%The estimates away from $\partial Q_2$ and 
Corollary \ref{fractionalsines} applied to $D^2\psi^\alpha$  now yields
\[
|\Lambda^\gamma D^2\psi^\alpha(x,y)|\leq C_{\alpha,\gamma,\epsilon}|\sin(\pi x)\sin(\pi y)|^{-\max\{\frac{\alpha}{2},\frac{2-2\alpha}{2-\alpha}\}-\gamma-\epsilon}
\] 
for all $\epsilon>0$ and $(x,y)$ near $\partial Q_2$,
% with $\sin(\pi x)\sin(\pi y)<\frac{1}{2}$ and 
with some $C_{\alpha,\gamma,\epsilon}$.
% Recalling that $D^3\psi^\alpha\in L^p\Big(B_\frac{1}{4}\Big(\frac{1}{2},\frac{1}{2}\Big)\Big)$ for all $p<2$, 
Direct integration and the estimate away from $\partial Q_2$ (with the range from (iii) included in $p<\frac 2\gamma$) finish the proof of (iii). 
% shows that $D^2\psi^\alpha\in W^{\gamma,p}$ for $\gamma\geq 0$ as long as 
%\[
%p<\left( \max \left\{\frac{\alpha}{2},\frac{2-2\alpha}{2-\alpha} \right\}+\gamma \right)^{-1}.
%\] 
\end{proof}

\subsection{Velocity fields realizing $T_d$ as their flow map (no-flow case)} \lb{S3.2}

According to the remark after Lemma \ref{L.3.2}, let us take $\alpha_*:=3-\sqrt{5}$ and let $\psi:=\frac12 \psi^{\alpha_*}$.  Then Lemma \ref{L.3.2} shows that the velocity field $v:=\nabla^\perp \psi=(-\psi_y,\psi_x)$ belongs to $W^{s,p}(Q_2)$ for any $s<\frac{1+\sqrt 5}2$ and $p\in[1,\frac{2}{2s+1-\sqrt{5}})$ and it rotates $Q_2$ clockwise by $90^\circ$ in time $\frac 12$.  Similarly, if $\phi(x,y):= \frac 12 {\rm sgn}(2x-1)\psi(\{2x\},y)$ (with $\{x\}$ the fractional part of $x$) and $w:=\nabla^\perp \phi$, then $w$  ``rotates'' the left and right halves of $Q_2$ counterclockwise and clockwise by $90^\circ$ in time $\frac 12$, respectively (by ``rotation'' we mean the affine map that is a bijection on the rectangle and permutes its corners in the indicated direction).  It therefore follows that
\beq\lb{3.2}
u(x,y,t):=
\begin{cases}
w(x,y) & \{t\}\in[0,\frac 12)
\\ -v(x,y) & \{t\}\in[\frac 12,1)
\end{cases}
\eeq
is time 1-periodic, satisfies the no-flow boundary condition, and its flow map at time 1 is $T$ from \eqref{2.0}.  Moreover, $w$ also belongs to  $W^{s,p}(Q_2)$ for any $s<\frac{1+\sqrt 5}2$ and $p\in[1,\frac{2}{2s+1-\sqrt{5}})$.  Indeed,  $\phi$ is Lipschitz continuous across $\{x=\frac 12\}$ since $\psi$ vanishes at $\{x=1\}$.  Also, $\phi_x$ is clearly continuous across $\{x=\frac 12\}$ while the same is true for $\phi_y$ because it vanishes there.   This and $\psi\in W^{2,p}(Q_2)$ now yield $\phi\in W^{2,p}(Q_2)$, and Corollary \ref{fractionalsines} applied to $D^2 \phi$ proves the claim.

Of course, the application of $v$ does nothing for mixing and one could instead replace it by $R^{-1}w(R(x,y))$ for times $t$ with $\{t\}\in[\frac 12,1)$, where $R$ is the counterclockwise rotation of $Q_2$ by $90^\circ$.  Then the time-$\frac 12$ and time-1 flow maps of the new 1-periodic flow will be $R^{-1}T$ and $R^{-2}T^2$, respectively, but an analog of Lemma \ref{ImageOfHorizontal} holds in this case and so do other results in Section 2.

It is now also clear how to construct the relevant velocity fields in higher dimensions.  They will have time periods $d-1$ and have the above $u$ acting in only 2 variables (specifically, 1 and $d$, 2 and $d$,..., $d-1$ and $d$) on each time interval with integer endpoints.  These time-periodic and time-piecewise-constant fields obviously again belong to $W^{s,p}(Q_d)$  for all $(s,p)$ from Theorem~\ref{T.1.1}(i).  Of course, we can then scale them in time and value by $d-1$ to obtain time period 1.

\subsection{Velocity fields on $\bbT^d$ (periodic case)}

In the case of periodic boundary conditions we cannot use the flow $u$ from the previous subsection because it is not $W^{s,p}$ across $\partial(0,1)^2$.  A way to fix this is to extend the stream functions $\psi,\phi$ oddly in both $x$ and $y$ onto $(-1,1)^2$ and then map them back onto $Q_2$ (or rather $\bbT^2$).  So we let
\[
\psi'(x,y):= \frac 12 \psi (2x-1,2y-1)
\]
(and similarly for $\phi'$) where $\psi$ is the doubly-odd extension.  If we now let $v':=\nabla^\perp \psi'$ and $w':=\nabla^\perp \phi'$, then these fields again belong to $W^{s,p}(\bbT^2)$ for any $s<\frac{1+\sqrt 5}2$ and $p\in[1,\frac{2}{2s+1-\sqrt{5}})$ by the argument from the previous subsection.  Now each of the four square cells from $G_1$ (of side length $\frac 12$) is left invariant by $v'$ and $w'$.  Therefore, a flow like \eqref{3.2} can mix exponentially quickly within each of these four cells, but there is no ``exchange'' between the cells.  

We remedy this problem by instead letting
\beq\lb{3.3}
u'(x,y,t):=
\begin{cases}
(\frac 12,\frac 12) & \{t\}\in[0,\frac 12)
\\ w'(x,y) & \{t\}\in[\frac 12,\frac 34)
\\ -v'(x,y) & \{t\}\in[\frac 34,1),
\end{cases}
\eeq
which satisfies periodic boundary conditions and at each time belongs to all the $W^{s,p}(\bbT^2)$ spaces above.
%as $u$ from \eqref{3.2} but this time on $\bbT^2$.
We denote by $T'$ the time-1 flow map of $u'$.  Let us also define
\[
\tilde T(x,y)=\begin{cases} 
     - (2x, \frac{y}{2}) + (1,1) & x\in(0,\frac{1}{2}), \\
     (2x, \frac{y}{2}) + (-1,0) & x\in (\frac{1}{2},1),   \end{cases}
\]
a mapping closely related to $T$ from \eqref{2.0}, and recall that $R^2$ is the $180^\circ$ degree  rotation of $Q_2$.

\begin{figure}[htbp]
%\hspace{2cm}
\centerline{\includegraphics[scale=0.6]{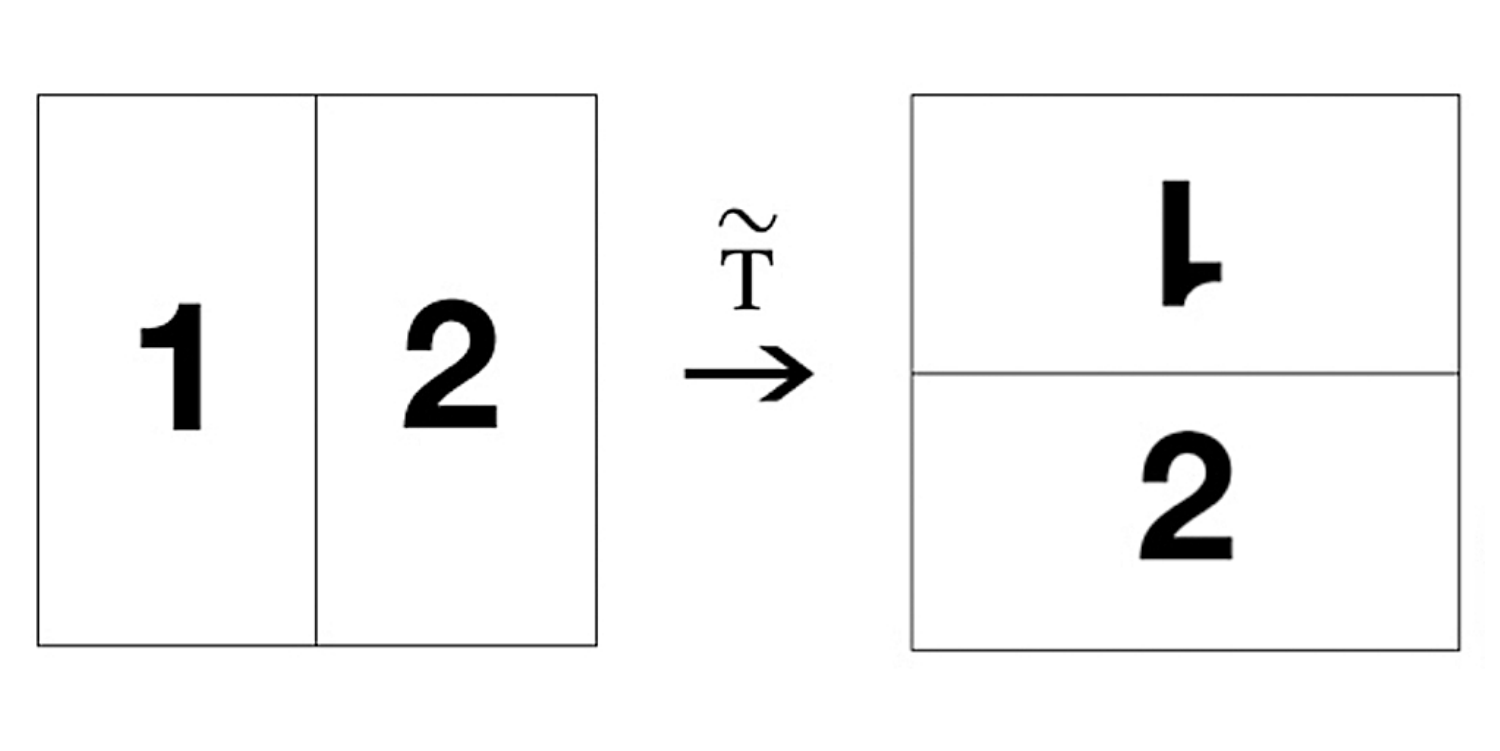}}
\caption{The mapping $\tilde T$.}\label{fig2}
\end{figure}

Consider now any dyadic rectangle $H^j_k\cap V^{j'}_{k'}\in G_{k,k'}$ with $k\ge 2$.  
If $k'\ge 2$, then its translation by $(\frac 14,\frac 14)$ that occurs under the action of $u'$ over time interval $[0,\frac 12)$ transforms $H^j_k\cap V^{j'}_{k'}$ into another element of $G_{k,k'}$, which is now fully contained inside one of the four cells from $G_1$.
If this cell is $(0,\frac 12)\times(0,\frac 12)$, $(\frac 12,1)\times(0,\frac 12)$, $(0,\frac 12)\times(\frac 12,1)$, or  $(\frac 12,1)\times(0,\frac 12)$, then the above action of $w'$ and $v'$ over time interval $[\frac 12,1)$ on this cell  is the same as the action of $T$, $R^2T$, $\tilde T$, or $R^2\tilde T$ on $Q_2$, respectively (due to the factor $\frac 12$ in the definition of $\psi'$ and $\phi'$).  Therefore, $T'(H^j_k\cap V^{j'}_{k'})\in G_{k+1,k'-1}$.

If instead $k'=1$, then the translation by $(\frac 14,\frac 14)$ splits the dyadic rectangle between two horizontally adjacent cells, but a direct computation (or the geometric picture described above) shows that again $T'(H^j_k\cap V^{j'}_{k'})\in G_{k+1,0}$.  It follows that Lemma \ref{ImageOfHorizontal}(ii) continues to hold for $T'$ in place of $T$ as long as $k\ge 2$.  

If now $H^j_k\in G_{k,0}$ with $k\ge 2$, then $T'(H^j_k)$ consists of two horizontal dyadic strips of width $2^{-(k+1)}$ located either in $H^0_2$ and $H^1_2$ or in $H^2_2$ and $H^3_2$.  In either case, $(T')^2(H^j_k)$ will then consists of four horizontal dyadic strips of width $2^{-(k+2)}$, one contained in each of $H^0_2$, $H^1_2$, $H^2_2$, and $H^3_2$ (a setup preserved by a translation by $(\frac 14,\frac 14)$).  A repeated application of $T'$ then shows that Lemma \ref{ImageOfHorizontal}(i) continues to hold for $T'$ in place of $T$ as long as $k,l\ge 2$.

This modified Lemma \ref{ImageOfHorizontal} is of course sufficient for the rest of the analysis in Section 2.  This includes the case $d\ge 3$, where we construct the relevant mapping $T_d'$ using $T'$ in the same way we constructed $T_d$ using $T$ (and then even both parts of Lemma \ref{L.2.8} will hold for $T_d'$ in place of $T_d$ as long as $k\ge 2$).
%   by prefacing the $d$-fold-odd version of $T_d$ by a single translation by $(\frac 14,\dots,\frac 14)$.
Thus there is again  a time 1-periodic and time-piecewise-constant vector field that belongs to $W^{s,p}(\bbT^d)$ for all $(s,p)$ from  Theorem \ref{T.1.1}(i) and whose time-1 flow map is $T_d'$.

\subsection{Non-integer times and the proof of Theorem \ref{T.1.1}} %(i) and \ref{T.1.2}}

Theorem \ref{T.1.1}(ii) immediately follows from Lemma \ref{L.2.4}, with $T_n$ the flow map of the vector field in question at time $n$.

Theorem \ref{T.1.1}(i) for only integer times follows from the above constructions and from Lemmas~\ref{L.2.2} and \ref{L.2.5}(iii) (as well as their multidimensional analogs in Lemma \ref{L.2.9} and their periodic boundary conditions analogs from the previous subsection).  

Let us now extend the first claim in Theorem \ref{T.1.1}(i) to all times (we only do it on $Q_2$, the other cases are analogous).  This uses the fact that the time-independent flows $w$ and $-v$ from \eqref{3.2} are bounded, due to Lemma \ref{L.3.1}.
%locally Lipschitz on $Q_2\setminus(\{(\frac 14,\frac 12),(\frac 34,\frac 12)\}\cup\{x=\frac 12\})$, uniformly in time.  
This implies that their actions are continuous in time on $L^2$, that is, if $\{\Phi_{t;z}\}_{t\ge 0}$ are their flow maps (with $z\in\{v,w\}$) and $g\in L^2(Q_2)$, then both maps $t\mapsto g\circ\Phi_{t;z}^{-1}$ belong to $C([0,\infty),L^2(Q_2))$.  Let also $\{\Phi_t\}_{t\in[0,1]}$ be the flow maps for the (time-dependent) flow $u$ from \eqref{3.2}.  Note that $\Phi_{-t}\neq\Phi_t^{-1}$ due to time dependence, and in fact we have
\[
\Phi_t=
\begin{cases}
\Phi_{t;w} & t\in[0,\frac 12],
\\ \Phi_{t-1/2;v}\circ \Phi_{1/2;w} & t\in(\frac 12,1],
\\   \Phi_{t;v}   & t\in[-\frac 12,0),
\\  \Phi_{t+\frac 12;w}  \circ \Phi_{-1/2;v}  & t\in[-1,-\frac 12).
\end{cases}
\]

Let a bounded mean-zero $\rho$ solve \eqref{1.1} with the flow $u$ on $Q_2$ from \eqref{3.2} and let $f:=\rho(\cdot,0)$.  Then $\rho(\cdot,n+s)=f\circ T^{-n}\circ\Phi_s^{-1}$ for each $n\in\bbN$ and $s\in[0, 1]$.
Given any $g\in L^2(Q_2)$ and $\eps>0$, let $N\in\bbN$ be such that for $z\in\{v,w\}$ we have
\[
\sup_{|s|\le 1/N}\|g\circ\Phi_{s;z}^{-1} - g\|_{L^2}\le \eps.
\]
Let $g_j:=g\circ\Phi_{-j/N}^{-1}\in L^2(Q_2)$ for $j=0,\dots,N-1$.  The weak-$L^2$ convergence in the proof of Lemma \ref{L.2.2} shows that $\{\rho(\cdot, \frac mN)\}_{n\ge 0}$ also converges weakly to 0 in $L^2(Q_2)$, because if $m=nN+j$ with $j\in\{0,\dots,N-1\}$, then incompressibility of $u$ shows that
\[
\int_{Q_2} \rho\left(\cdot, \frac mN \right) g dx = \int_{Q_2} \left(f\circ T^{-n}\circ \Phi_{j/N}^{-1} \right) g dx = \int_{Q_2} \left( f\circ T^{-n} \right) g_j dx \qquad \text{($\to 0$ as $m\to\infty$).}
\]
Now if $t=\frac mN+s$ with $s\in[0,\frac 1N)$, then
\[
\int_{Q_2} \rho\left(\cdot, t \right) g dx =  \int_{Q_2}  \rho\left(\cdot, \frac mN \right) \left( g\circ\Phi_{-s;z}^{-1}\right) dx,
\]
where $z=w$ if $\lfloor \frac mN \rfloor<\frac N2$ and $z=v$ otherwise (recall that $N$ is even).  But then 
\[
\left| \int_{Q_2} \rho\left(\cdot, t \right) g dx \right| \le \left| \int_{Q_2}  \rho\left(\cdot, \frac mN \right)gdx\right| + \left\| \rho \left(\cdot,\frac mN \right) \right\|_{L^2}^{1/2} \| g\circ\Phi_{-s;z}^{-1} -g \|_{L^2}^{1/2}.
\]
The first term on the right-hand side converges to 0 as $t\to \infty$ (because then $m\to\infty$), while the second is at most $ \|\rho(\cdot,0)\|_{L^2}^{1/2}\eps^{1/2}$.  Since  $g\in L^2(Q_2)$ and $\eps>0$ were arbitrary, the claim of asymptotic mixing in the functional sense is proved.  The same claim in the geometric sense again follows from this and Lemma A.1 in \cite{YaoZla}.

The next lemma, which  is of independent interest, shows that the flow maps of our square-rotating flow for $t\in[0,1]$ (so until the square is rotated by $90^\circ$) are uniformly H\" older continuous.  (Note that since the flow is not Lipschitz, H\" older continuity of the flow maps is not obvious.)  This will extend the exponential mixing claim  in Theorem \ref{T.1.1}(i) from integer times to all times, as we show next. 
% (with H\" older exponent $\beta_\alpha^d$, where $\beta_\alpha$ is from the lemma). 
For the sake of convenience, we switch to the notation $x=(x_1,x_2)$ on $Q_2$ in the rest of this subsection.

\begin{lemma} \lb{L.3.4}
Let $\alpha\in(0,1)$ 
%$\alpha_*:=3-\sqrt{5}$ 
and consider the velocity field $u':=\nabla^\perp \psi^{\alpha}$ on $Q_2$, with $\psi^\alpha$ from \eqref{3.1a}.  Then the flow maps $\Phi'(\cdot,t)$ for $u$, given by
\[
\frac d{dt} \Phi'(x,t) = u(\Phi'(x,t)) \qquad\text{and}\qquad \Phi'(x,0)=x,
\]
are H\" older continuous uniformly in $t\in[0,1]$ (with some exponent $\beta_\alpha>0$ and constant $C_\alpha$).
\end{lemma}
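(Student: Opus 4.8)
The plan is to pass to action--angle coordinates adapted to the foliation of $Q_2$ by the level curves of $\psi^\alpha$ --- in which the flow of $u'$ is simply a rigid rotation of each level curve --- and then to control quantitatively the change of coordinates, which degenerates near $\partial Q_2$ and, for the angular variable, near the corners. Throughout, $A\asymp B$ means comparison up to a multiplicative constant depending only on $\alpha$.

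First I would record the structure. Since $u'=\nabla^\perp\psi^\alpha$ is tangent to the level sets of $\psi^\alpha$, the function $\psi^\alpha$ is conserved along the flow, so every super-level set is flow-invariant; by Lemma~\ref{L.3.1}(i) each curve $\{\psi^\alpha=r\}$ (with $0<r<M:=\|\psi^\alpha\|_{L^\infty}$) is a convex Jordan curve, which a particle traverses in time exactly $1$ by Lemma~\ref{L.3.2}(ii). Split $Q_2$ into the flow-invariant core $K:=\{\psi^\alpha\ge M/2\}$ and collar $\mathcal N:=\{0<\psi^\alpha<2M/3\}$, with $K\cup\mathcal N=Q_2$ and $K$ a compact subset of the open square. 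On $K$ we have $u'=\nabla^\perp\psi^\alpha\in W^{1,\infty}(K)$ by Lemma~\ref{L.3.2} (i.e. $\psi^\alpha\in W^{2,\infty}_{\rm loc}(Q_2)$), and trajectories through $K$ stay in $K$, so Gr\"onwall's inequality makes $\Phi'(\cdot,t)|_K$ Lipschitz uniformly for $t\in[0,1]$. On $\mathcal N$ define $\Xi:=(\psi^\alpha,\Theta):\mathcal N\to(0,2M/3)\times(\bbR/\bbZ)$, where $\Theta(x)$ is the $|\nabla\psi^\alpha|^{-1}$-weighted arclength along the level curve through $x$ from a fixed transversal (well defined mod $1$ by Lemma~\ref{L.3.2}(ii)). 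In these coordinates the flow is $(r,\theta)\mapsto(r,\theta+t)$, so $\Phi'(x,t)=\Xi^{-1}\!\bigl(\psi^\alpha(x),\Theta(x)+t\bigr)$. Hence, once $\Xi$ and $\Xi^{-1}$ are shown to be H\"older on $\mathcal N$ with some positive exponents $\gamma_1,\gamma_2$, we get for $x,x'\in\mathcal N$
\[
|\Phi'(x,t)-\Phi'(x',t)|\le C\bigl(|\psi^\alpha(x)-\psi^\alpha(x')|+|\Theta(x)-\Theta(x')|\bigr)^{\gamma_2}\le C\,|x-x'|^{\gamma_1\gamma_2},
\]
using that $\psi^\alpha$ is globally Lipschitz. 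Combining this with the core bound --- when $|x-x'|$ is small both points lie in a slightly enlarged core or both in $\mathcal N$, and when it is not small the estimate is trivial since the flow maps have bounded range --- proves the lemma with $\beta_\alpha:=\gamma_1\gamma_2$.

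It remains to show $\Xi$ is bi-H\"older on $\mathcal N$. Using the full symmetry group of $Q_2$ (under which $\psi_\alpha$ is invariant) I would reduce to a fundamental wedge near the bottom edge, split into an edge part (bounded away from the corners) and a corner part near $(0,0)$. On the edge part, $\psi_\alpha(x,y)\asymp y(\sin\pi x)^{1-\alpha}$ and $|\nabla\psi_\alpha|\asymp(\sin\pi x)^{1-\alpha}$ show the level curves are nearly horizontal graphs and $\Theta$ is comparable to the horizontal coordinate, so $\Xi$ is bi-Lipschitz there. On the corner part, Lemma~\ref{L.3.1}(ii) together with the lower bound $|\nabla\psi_\alpha|\gtrsim(\sin\pi x+\sin\pi y)^{1-\alpha}$ from its proof give $\psi_\alpha\asymp xy(x+y)^{-\alpha}$ and $|\nabla\psi_\alpha|\asymp(x+y)^{1-\alpha}$; on the branch $x\gtrsim y$ of $\{\psi_\alpha=r\}$ this yields $\mathrm d\Theta\asymp x^{\alpha-1}\mathrm dx$, so $\Theta$ is comparable to $x^\alpha$ up to a level-dependent constant equal to the $\Theta$-value at the ``turning point'' $x\asymp y\asymp r^{1/(2-\alpha)}$, and inverting, $x\asymp\bigl(r^{\alpha/(2-\alpha)}+|\Theta-\Theta_0(r)|\bigr)^{1/\alpha}$ and $y\asymp\psi^\alpha x^{\alpha-1}$. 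Since $1/\alpha\ge1$ these formulas are Lipschitz in $\Theta$, and --- using that $r\mapsto r^{\alpha/(2-\alpha)}$ is $\tfrac{\alpha}{2-\alpha}$-H\"older, $x\mapsto x^\alpha$ is $\alpha$-H\"older, and $t\mapsto t^\alpha$ is subadditive --- H\"older in $r$; the transition to the edge part, the matching of the two branches at the turning point, and the case of two points on opposite branches of a corner are all handled by the same asymptotics. This gives exponents $\gamma_1=\tfrac{\alpha}{2-\alpha}$ for $\Theta$ and $\gamma_2=\tfrac1{2-\alpha}$ for $\Xi^{-1}$ (any positive values suffice).

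The main obstacle is exactly this bi-H\"older estimate near the corners: there $\Xi$ degenerates in two ways at once --- the level curves become long and thin as $r\to0$, and they turn around at the corner where $|\nabla\psi^\alpha|\to0$ --- so one must track carefully how the Euclidean metric compares with the action--angle metric across all the sub-cases, and it is the explicit corner asymptotics of $\psi_\alpha$ and the exponent bookkeeping that do the real work. By contrast, the reduction via action--angle coordinates and the Gr\"onwall step in the interior are routine.
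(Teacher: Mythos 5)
Your proposal is correct in outline but takes a genuinely different route from the paper. The paper proves H\"older continuity of the flow map $\Phi$ of the \emph{unnormalized} field $\nabla^\perp\psi_\alpha$ directly, via Gr\"onwall-type estimates on $|\Phi(x,t)-\Phi(y,t)|$ using pointwise bounds on $\nabla u$, split into three cases according to the positions of the two trajectories relative to the corner (with the forward estimate \eqref{3.6a}--\eqref{3.9} in the sector near the $x_2$-axis, a time-reversal argument using the two-sided bound \eqref{3.9a} when the points start near the $x_1$-axis, and an exit-time argument \eqref{3.10}--\eqref{3.12} in between); only at the very end does it pass from $\Phi$ to $\Phi'$ via the reparametrization $\Phi'(x,t)=\Phi(x,P_\alpha(x)t)$. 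You instead work directly with $\Phi'$, conjugate it to a rigid rotation via action--angle coordinates $\Xi=(\psi^\alpha,\Theta)$ (here the constant-period normalization, Lemma~\ref{L.3.2}(ii), is essential), and reduce everything to a single bi-H\"older estimate for $\Xi$. Your approach is conceptually cleaner in that it packages the forward and reverse estimates (the paper's Cases~1 and~3) into one statement, and it makes the source of the H\"older exponent transparent as a property of the foliation rather than of individual trajectories; the paper's approach avoids having to set up and invert a global coordinate system, and its case split localizes cleanly in time.

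That said, your sketch leaves the genuinely hard part --- the bi-H\"older bound for $\Xi$ near the corners --- at the level of asymptotic formulas with several ``handled by the same asymptotics'' gaps: the H\"older regularity in $r$ of the turning-point angle $\Theta_0(r)$ is asserted but not derived (to kill it you really want to choose the transversal to be the diagonal through the corner of the fundamental wedge so that $\Theta_0\equiv\mathrm{const}$ there), and the uniformity of the $\Xi^{-1}$ estimate across the transition between the corner region and the edge region, and across the two branches, is precisely the delicate point that in the paper requires the separate lower bound \eqref{3.9a} and Case~3 argument --- you should carry these out rather than wave at them. There is also a small exponent slip: with $x\asymp(r^{\alpha/(2-\alpha)}+|\Theta-\Theta_0|)^{1/\alpha}$ the worst-case modulus of continuity of $x$ in $r$ is $|r_1-r_2|^{\alpha/(2-\alpha)}$ (attained when $|\Theta-\Theta_0|$ is of order the cutoff $\delta_0^\alpha$), not $|r_1-r_2|^{1/(2-\alpha)}$, so $\gamma_2=\frac{\alpha}{2-\alpha}$ rather than $\frac{1}{2-\alpha}$; this is harmless since, as you note, any positive exponent suffices, but the bookkeeping should be fixed if you write it out.
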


Let us now prove the second claim in Theorem \ref{T.1.1}(i).
Consider any $\sigma>0$ and $\rho(\cdot,0)\in H^\sigma(Q_2)$.  Let a bounded mean-zero $\rho$ solve \eqref{1.1} with the flow $u$ on $Q_2$ from \eqref{3.2} and let $f:=\rho(\cdot,0)$.  Then, $\rho(\cdot,n+t)=f\circ T^{-n}\circ\Phi_t^{-1}$ for $n\in\bbN$ and  $t\in[0,\frac 12]$, where now $\Phi_t$ is the flow map at time $t$ for the time-independent flow $w$ from \eqref{3.2} (denoted $\Phi_{t;w}$ above).
% $\rho(\cdot,n)=f\circ T^{-n}$ for each $n\in\bbN$, and $\rho(\cdot,n+t)$ for $t\in[0,\frac 12]$  is obtained by advecting $f\circ T^{-n}$ by the flow $w$ from \eqref{3.2} for time $t$.  That is, $\rho(\cdot,n+t)=f\circ T^{-n}\circ\Phi_t^{-1}$, where $\Phi_t$ is the flow map at time $t\in[0,\frac 12]$ for the time-independent flow $w$.  
 By Lemma \ref{L.3.4}, these flow maps are uniformly H\" older continuous (with some exponent $\beta>0$ and constant $C$) separately on the left and right halves of $Q_2$.  Assume $\beta\le \frac 23$ wihtout loss.

 Let $f_k$ be from the proof of Lemma \ref{L.2.5}(iii) and let $Q'\in G_{\lfloor \delta k\rfloor}$ be arbitrary, with some $\delta>0$.   
 We will show that estimate \eqref{2.3} for squares $Q\in G_k$ and the H\" older bound on $\Phi_t$ together yield an analogous estimate for $f_k\circ T^{-2k}\circ\Phi_t^{-1}$ on $Q'$ with any $t\in[0,\frac 12]$.  Indeed, assume without loss that $Q'$ belongs to the left half $L$ of $Q_2$ (on which the flow maps are H\" older continuous) and let $Q_t:=\Phi_t^{-1}(Q')\subseteq L$.   Let $P_t\subseteq Q_t$ be the union of all the squares from $G_k$ that are fully contained in $Q_t$.  The H\" older bound shows that if $x\in Q_t\setminus P_t$, then ${\rm dist}(\Phi_t(x),\partial Q')\le C2^{\beta(1-k)}$.  That is, $|Q_t\setminus P_t|\le 4C2^{\beta - \lfloor \delta k\rfloor-\beta k}$.  Picking $\delta\le\frac \beta 2$ shows that $|Q_t\setminus P_t|\le C_1 2^{-\beta k/2} |Q_t|$ with some constant $C_1$.  This, \eqref{2.3}, and $\Phi_t$ preserving measure now show (with some constants $C_2,C_3$)
 \[
 \left| \fint_{Q'} f_k\circ T^{-2k}\circ\Phi_t^{-1} dx \right| =  \left| \fint_{Q_t} f_k\circ T^{-2k} dx \right| \le C_22^{-k/3}\|f\|_{H^\sigma} + C_1 2^{-\beta k/2} \|f\|_{L^\infty} \le C_3 2^{-\beta k/2} \|f\|_{H^\sigma}
 \]
 (and the same bound again holds with $T^{-(2k+1)}$ in place of $T^{-2k}$).
 The claim in \eqref{2.4} continues to hold if we replace $T^{-2k}$ by $T^{-2k}\circ\Phi_t^{-1}$, which then  yields exponential decay of $\|\rho(\cdot,t)\|_{\dot H^{-1/2}}$ when restricted to $t\ge 0$ with $\{t\}\in[0,\frac 12]$, with a $\sigma$-dependent rate (this again uses equivalence of the $\dot H^{-1/2}$-norm and the mix norm).  To handle the times with $\{t\}\in(\frac 12,1)$, it suffices to notice that $\Phi_{1/2}$ simply rotates the two halves of $Q_2$ by $90^\circ$ (so it is Lipschitz with constant 2), and the flow $-v$ from \eqref{3.2} satisfies the same H\" older estimate as $w$.  Hence the above argument applies again and almost universal exponential mixing in the functional sense on $Q_2$ follows.  The same claim in the geometric sense again follows from this and Lemma A.1 in \cite{YaoZla}.
The other cases of $Q_d$ and $\bbT^d$ are again analogous because the flows involved are essentially  the same as on $Q_2$.

To finish the proof of Theorem \ref{T.1.1}(i),
it therefore remains to prove Lemma \ref{L.3.4}.

\begin{proof}[Proof of Lemma \ref{L.3.4}]
Let us first  consider $u:=\nabla^\perp \psi_{\alpha}$ instead, and prove the claim about the corresponding flow map $\Phi$ and with $t$ in any interval $[0,\tau]$.  In fact, for the sake of convenience, let us instead consider the equivalent case of 
\[
u(x,y) := \nabla^\perp \tilde \psi_\alpha(x):=\nabla^\perp \frac{\sin x_1 \sin x_2}{(\sin x_1+\sin x_2)^{\alpha}}
\] 
in the square $\pi Q_2$.

Since $u$ is Lipschitz away from the corners, it suffices to consider the case of points $x,y\in \pi Q_2$ such that both trajectories $\{\Phi(x,s)\}_{s\in[0,t]}$ and $\{\Phi(x,s)\}_{s\in[0,t]}$ remain within some distance $0<\delta \ll 1$ from a corner of $\pi Q_2$.  By symmetry, this corner can be assumed to be the origin, so in particular $x,y\in A_\del:= B_\delta(0)\cap \pi Q_2$.   We note that $\delta$ will depend on $\alpha$ (then any constants depending on $\delta$ will really only depend on $\alpha$) and will be such that $\inf_{z\in(0,\delta)} (\frac {\sin z}{z}+\cos z)$ is sufficiently close to 2.  We will split the argument into several cases.

{\bf Case 1.} We will first consider points $x,y\in A_\del$ near the $x_2$ axis, specifically, in the region $S_{\del}:=A_\del\cap\{x_1\le \del x_2\}$.  Their trajectories will be therefore moving primarily upwards and slightly to the left while they remain in $A_\del$ (and therefore they will also remain in $S_\del$ during this time), with the vertical component of $u(x)$ being
\[
u_2(x)= \cos x_1 \sin x_2 \frac{ (1-\alpha)\sin x_1 + \sin x_2 }{(\sin x_1 + \sin x_2)^{1+\alpha}}.
\]
We therefore have
\beq \lb{3.5}
u_2(x) \ge (1-4\del)x_2^{1-\alpha} \qquad \text{for all $x\in S_\del$}
\eeq
 if $\del$ is small enough.  

Similarly, estimating $\nabla u(x)$ yields
\beq \lb{3.5a}
\sup_{x\in S_\del} \max \{|\nabla u_1(x) x_2^{\alpha} - (\alpha-1, 0)| , |\nabla u_2(x) x_2^{\alpha} - (-2\alpha,1-\alpha)| \} \le C\delta
\eeq
if $\del$ is small enough (with a universal constant  $C\ge 1$), and therefore
\[
\sup_{x\in S_\del} \sup_{v\in\bbR^2} |v|^{-2} \left| v\cdot \nabla u(x) v  x_2^{\alpha} - [ (1-\alpha)(v_2^2-v_1^2)-2\alpha v_1v_2] \right| \le C\delta
\]
 (with a new universal $C\ge 1$).  Since the function $(1-\alpha)(v_2^2-v_1^2)-2\alpha v_1v_2$ is 0-homogeneous and strictly below 1, it follows that
\beq \lb{3.6}
|v\cdot \nabla u(x) v| \le (1-5\del) x_2^{-\alpha} |v|^2  \qquad \text{for all $v\in\bbR^2$ and $x\in S_\del$}
\eeq
 if $\del$ is small enough.

Let us now assume that $x,y\in S_\delta$ and $x_2\le y_2\le 2x_2$, and consider any time $t$ such that $\{\Phi(x,s)\}_{s\in[0,t]},\{\Phi(x,s)\}_{s\in[0,t]}\subseteq A_\del$ (and therefore also in $S_\del$). 
% Then both trajectories remain up to time $t$ above the diagonal, where $u_2(x)\ge 2\pi x_2^{1-\alpha}$.  
It follows from \eqref{3.5} that $\Phi_2(x,s)\ge (\alpha(1-4\del) s + x_2^\alpha)^{1/\alpha}$ for $s\in[0,t]$, and similarly for $y$ in place of $x$, so $x_2\le y_2$ shows that
\[
\min\{\Phi_2(x,s),\Phi_2(y,s)\}\ge (\alpha(1-4\del) s + x_2^\alpha)^{1/\alpha} \qquad\text{for all $s\in[0,t]$.}
\]
This and \eqref{3.6} show that
\beq \lb{3.6a}
\left| \frac d{ds} |\Phi(x,s)-\Phi(y,s)|  \right| \le (1-5\del) (\alpha(1-4\del) s + x_2^\alpha)^{-1}  |\Phi(x,s)-\Phi(y,s)|,
\eeq
and integration then yields
\beq \lb{3.7}
%\frac 
{|\Phi(x,t)-\Phi(y,t)|} \le \left( \frac{\alpha(1-4\del) t + x_2^\alpha}  {x_2^\alpha} \right)^{(1-5\del)/\alpha(1-4\del)}  {|x-y|}.
\eeq
Since $x_2\ge \frac 12|x-y|$ due to $x_2\le y_2\le 2x_2$, and $x_2^\alpha+t\le 1$ if $\del$ is small, it follows that 
\beq \lb{3.8}
|\Phi(x,t)-\Phi(y,t)|   \le 2|x-y|^{1-\frac{1-5\del}{1-4\del}} \le 2|x-y|^\del.
\eeq
This is the desired uniform H\" older bound for the $x,y$ we considered here.

If now $x,y\in S_\delta$ are arbitrary (without loss we can assume $x_2\le y_2$), let $z^0=x, z_1, \dots,z_n=y$ be points on the segment $xy$ such that $x_2\le z^1_2\le 2x_2$ and $z^j_2=2z^{j-1}_2$ for all $j>1$.  Then telescoping \eqref{3.8} applied to couples of points $\{(z^{j-1},z^j)\}_{j=1}^n$ in place of $(x,y)$ yields
\beq \lb{3.9}
|\Phi(x,t)-\Phi(y,t)|   \le  \sum_{j=1}^\infty 2(2^{1-j}|x-y|)^\del  \le 4\del^{-1}|x-y|^\del
\eeq
as long as the trajectories remain in $A_\del$.  This finishes Case 1.

For later reference we  note that $|\Phi(x,\cdot)-\Phi(y,\cdot)|$ also cannot decrease too quickly in $S_\del$ if $\del$ is small enough.  
%integration of \eqref{3.6a} also yields
%\beq \lb{3.9a}
%%\frac 
%{|\Phi(x,t)-\Phi(y,t)|} \ge \left( \frac {x_2^\alpha}{\alpha(1-4\del) t + x_2^\alpha}  \right)^{(1-5\del)/\alpha(1-4\del)}  {|x-y|},
%\eeq
%and then as before
%\[
%|\Phi(x,t)-\Phi(y,t)|   \ge  \frac 12 |x-y|^{1+(1-5\del)/(1-4\del)} \ge \frac 12|x-y|^2
%\]
%when $x_2\le y_2\le 2x_2$.  Telescoping then again yields
Specifically, let $\delta\le \frac{(1-\alpha)^2}{4C}$, with $C\ge 1$ from \eqref{3.5a}.  This and  \eqref{3.5a} applied to all points on the segment $\Phi(x,s)\Phi(x,s)$ show that if $v(s):=\Phi(y,s)-\Phi(x,s)$ satisfies $|v_2(t_0)|= \frac {4\alpha}{1-\alpha} |v_1(t_0)|$ at some $t_0\in[0,t]$, then $\frac d{ds}|v_1(t_0)|< 0$ and $\frac d{ds}|v_2(t_0)|> 0$.  That is, we must have $|v_2(s)|\ge \frac {4\alpha}{1-\alpha} |v_1(s)|$ for all $s\in[t_0,t]$, and \eqref{3.5a} now shows that $\frac d{ds}|v_2|>0$ on $[t_0,t]$.  This means that we only need to track $|v(\cdot)|$ only on the longest interval $(0,t_0]\subseteq[0,t]$ on which $|v_2(\cdot)|\le  \frac {4\alpha}{1-\alpha} |v_1(\cdot)|$.  
%Given that  $\Phi(y,s),\Phi(x,s)\in S_\del$ for all $s\in[0,t_0]$ and $\frac 1\del> \frac{4\alpha}{1-\alpha}$, there is $C'=C'(\alpha)$ such that $|v_2(\cdot)|\le C' |v_1(\cdot)|$ on $(0,t_0]$.  
But then similarly to \eqref{3.6a} we obtain
\[
\left| \frac d{ds} |v_1(s)|  \right| \le  \left(1-\alpha + C\delta+\frac {4\alpha}{1-\alpha}C\delta \right) (\alpha(1-4\del) s + x_2^\alpha)^{-1}  |v_1(s)|=:C'' (\alpha(1-4\del) s + x_2^\alpha)^{-1}  |v_1(s)|
\]
on  $(0,t_0]$.  Integrating this  yields
\[
{|v_1(t)|} \ge \left( \frac {x_2^\alpha} {\alpha(1-4\del) t + x_2^\alpha}   \right)^{C''/\alpha(1-4\del)}  {|v_1(0)|}
\]
on $(0,t_0]$.  Since again $\alpha(1-4\del) t + x_2^\alpha\le 1$, and also $x_2\ge cv_1(0)$ for some $c=c(\alpha)>0$ when $|v_2(0)|\le  \frac {4\alpha}{1-\alpha} |v_1(0)|$ (because $x,y\in S_\del$ and $\frac 1\del> \frac{4\alpha}{1-\alpha}$), it follows that
\[
{|v_1(t)|} \ge c''  {|v_1(0)|^{(C''+1)/\alpha(1-4\del)}}.
\]
on $(0,t_0]$, with some $c''=c''(\alpha)$.  Since we also have $|v_2(\cdot)|\le  \frac {4\alpha}{1-\alpha} |v_1(\cdot)|$ there,  we obtain
\beq \lb{3.9a}
{|\Phi(y,t)-\Phi(x,t)|} \ge \gamma  {|x-y|^{1/\gamma}}.
\eeq
on $(0,t_0]$ (and hence as long as the trajectories remain in $A_\del$, by the previous argument) whenever $x,y\in S_\del$, with some $\gamma=\gamma(\alpha)$>0.

{\bf Case 2.}
Next we assume that $x,y\in S'_\del:=A_\del\cap\{x_2\ge \del x_1\}$.  We now want to pick small enough $\eps>0$ (depending on $\alpha$ and $\del$, and hence on $\alpha$) such that for any streamline $\{\tilde\psi_\alpha=r\}$ that intersects $A_\del$ (in particular, $r\le \del^{2-\alpha}$), the time it takes for a particle moving on this streamline (with velocity $u$) to traverse the portion lying inside $S'_\del\setminus S_\del$ is shorter than the time it takes for a particle moving on the streamline $\{\tilde\psi_\alpha=r'\}$ for any $r'\in[\frac r2,2r]$ to traverse the portion lying inside $\{\eps\del x_2\le x_1\le \del x_2\}$.  Such $\eps$ exists because for the stream function $\psi(x)=x_1x_2(x_1+x_2)^{-\alpha}$ the ratio of these times only depends on $\frac {r'}r$ (due to $(2-\alpha)$-homogeneity of $\psi$) and converges to 0 as $\eps\to 0$ (uniformly in $\frac {r'}r\in[\frac 12,2]$), and the ratio of any derivative of $\tilde \psi_\alpha$ and the same derivative of $\psi$ is within $[\frac 12,2]$ on all of $A_\del$ if $\del$ is small enough.  The reader may want to keep in mind that since the level sets of $\psi$ are just scaled copies of each other, the same is true asymptotically for the level sets of $\tilde\psi_\alpha$ near the origin.

The case $x,y\in S_\del$ was already covered.  Let us now assume $x,y\in S'_\del\setminus S_\del$, as well as $\tilde\psi_\alpha (x)\le \tilde\psi_\alpha (y)\le 2 \tilde\psi_\alpha (x)$.  Let $t_0\ge 0$ be the first time when both trajectories $\Phi(x,\cdot)$ and $\Phi(y,\cdot)$ have left $S'_\del\setminus S_\del$.  Because of our choice of $\eps$, we must have $\Phi(x,t_0),\Phi(y,t_0)\in \{\eps\del x_2\le x_1\le \del x_2\}$, so \eqref{3.9} shows that while both trajectories stay in $A_\del$, we have
\beq \lb{3.10}
|\Phi(x,t)-\Phi(y,t)|   \le  4\del^{-1}|\Phi(x,t_0)-\Phi(y,t_0)|^\del
\eeq
for $t\ge t_0$. To estimate the  last term, we notice that while either trajectory is within $S'_\del\setminus S_\del$, its velocity is bounded below by $c|x|^{1-\alpha}$ for some $c=c(\alpha)$, which means that $t_0\le C_1|x|^\alpha$  for some $C_1=C_1(\alpha)$.  Since we also have the bound $|\nabla u| \le C_2|x|^{-\alpha}$ with $C_2=C_2(\alpha)$ on the portions of both trajectories lying in $S_{\del'} \cap \{x_1\ge \eps\del x_2 \}$ (recall that $\delta,\eps$ depend only on $\alpha$), a simple integration shows that
\beq \lb{3.11}
|\Phi(x,t)-\Phi(y,t)|\le e^{C_2|x|^{-\alpha}C_1|x|^\alpha}|x-y| = e^{C_2C_1}|x-y|
\eeq
for $t\in[0,t_0]$.   It follows that
\beq \lb{3.12}
|\Phi(x,t)-\Phi(y,t)|   \le  C|x-y|^\del
\eeq
while both trajectories stay in $A_\del$, with $C=C(\alpha)$.

Next assume $x,y\in S'_\del\setminus S_\del$ (then without loss $\tilde\psi_\alpha (x)\le  \tilde\psi_\alpha (y)$) and also that $\tilde\psi_\alpha (x)< 2 \tilde\psi_\alpha (y)$.  This last fact shows that the distance of any two points on the portions of the level sets of $\tilde\psi_\alpha$  containing $x$ and $y$ that lie in $S'_\del\setminus S_\del$ is at most $C_3|x-y|$ for some $C_3=C_3(\alpha)$.  If now $t_0$ is the first time when one of the trajectories exits $S'_\del\setminus S_\del$, we will have
\[
|\Phi(x,t_0)-\Phi(y,t_0)|\le C_3|x-y|,
\]
which means we can finish this case by using the case in the next paragraph.

If now $x\in S'_\del\setminus S_\del$ and $y\in S_\del$, we let $z$ be the point with $\tilde\psi_\alpha(z)=\tilde\psi_\alpha(x)$ and $z_1=\del z_2$.  Let also $t_0$ be the first time when $\Phi(x,\cdot)$ leaves $S'_\del\setminus S_\del$.  Then the fact that the level sets of $\tilde\psi_\alpha$ intersect the line $\{x_1=\del x_2\}$ transversally (with angles uniformly bounded away from 0) shows that  $\max\{|x-z|,|y-z|\}\le C_4|x-y|$ for some $C_4=C_4(\alpha)$.  If now $t_0$ is the first time when $\Phi(x,\cdot)$ leaves $S'_\del\setminus S_\del$, then the argument leading to \eqref{3.11} shows that
\[
|\Phi(t_0,x)-\Phi(t_0,z)|\le e^{C_2C_1}C_4|x-y|,
\]
while \eqref{3.9} shows
\[
|\Phi(t_0,y)-\Phi(t_0,z)|\le 4\delta^{-1}|y-z|^\del.
\]
Adding these to estimate $|\Phi(t_0,x)-\Phi(t_0,y)|$ and then applying \eqref{3.9} on the interval $[t_0,t]$ yields
\beq \lb{3.13}
|\Phi(t,x)-\Phi(t,y)|\le C|x-y|^{\delta^2}
\eeq
while both trajectories stay in $A_\del$, with a new $C=C(\alpha)$.  Since this bound can absorb the bounds in \eqref{3.9} and \eqref{3.12} by adjusting $C$, this finishes Case 2.

{\bf Case 3.}
It remains to consider the case $x\in A_\del\setminus S'_\del$.  If  $y\in S'_\del$, let $z$ be the point with $\tilde\psi_\alpha(z)=\tilde\psi_\alpha(x)$ and $z_2=\del z_1$, and let $t_0$ be the first time when $\Phi(x,\cdot)$ enters $S_\del$ (and hence $\Phi(x,t_0)=(z_2,z_1)$ due to symmetry of $\tilde\psi_\alpha$ in its arguments).  Just as in the last paragraph, it follows that $\max\{|x-z|,|y-z|\}\le C_4|x-y|$, and therefore \eqref{3.13} yields
\[
|\Phi(t_0,y)-\Phi(t_0,z)|\le C|x-y|^{\delta^2},
\]
with a new $C=C(\alpha)$.  On the other hand, the same symmetry  shows that $\Phi(z,t_0)=(x_2,x_1)$, so 
\[
|\Phi(t_0,x)-\Phi(t_0,z)|=|x-z| \le C_4|x-y|.
\]
Again, dding these to estimate $|\Phi(t_0,x)-\Phi(t_0,y)|$ and then applying \eqref{3.9} on the interval $[t_0,t]$ yields
\beq \lb{3.14}
|\Phi(t,x)-\Phi(t,y)|\le C|x-y|^{\delta^3}
\eeq
while both trajectories stay in $A_\del$, with a new $C=C(\alpha)$. 

Finally, if both $x,y\in A_\del\setminus S'_\del$, let $t_0$ be the first time when one of $\Phi(x,\cdot)$ and $\Phi(y,\cdot)$ enters $S'_\del$ (assume it is $\Phi(y,t_0)$).  We can now apply the argument leading to \eqref{3.9a} on the time interval $[t_0,0]$, with time running in reverse and with the roles of the two coordinates reversed.  This yields  
\[
{|x-y|} \ge \gamma  {|\Phi(y,t_0)-\Phi(x,t_0)|^{1/\gamma}},
\]
which together with \eqref{3.14} applied on the time interval $[t_0,t]$ implies
\[
|\Phi(t,x)-\Phi(t,y)|\le C|x-y|^{\delta^3\gamma}
\]
while both trajectories stay in $A_\del$, with a new $C=C(\alpha)$.  This finishes Case 3, and so also the proof for $\Phi$.

{\bf Back to $\Phi'$.}
We now note that $\Phi'(x,t)=\Phi(x,P_\alpha(x)t)$, where $P_\alpha(x):=T_{\psi_\alpha}(\psi_\alpha(x))$, and so 
\[
|\Phi'(x,t)-\Phi'(y,t)| \le |\Phi(x,P_\alpha(x)t)-\Phi(y,P_\alpha(x)t)| + |\Phi(y,P_\alpha(x)t)-\Phi(y,P_\alpha(y)t)|
\]
Boundedness of $P_\alpha$ and the result for $u'$ now show that the first term on the right-hand side is bounded by multiple of some power of $|x-y|$, uniformly in $t\in[0,1]$.  The same is true for the second term because $u'$ is bounded and $P_\alpha$ is H\" older continuous due to boundedness of $\nabla\psi_\alpha$, Lemma \ref{L.3.1}(iii), and the fact that $\psi_\alpha$ has a quadratic critical point at the origin.
\end{proof}

\section{Appendix: Fractional Derivatives and Concavity of $\log \psi_\alpha$}

%\begin{lemma}\label{fractional}
%Assume that $f:Q_2\rightarrow\mathbb{R}$ is smooth away from $(0,0)$ and that: \[|f(x)|\leq C_1\]\[|\nabla f(x)|\leq C_2|x|^{-1}\] for all $x\in Q_2\setminus\{0\}$. Then, for $0<\gamma<1$, \[|\Lambda^\gamma f(x)|\leq C_\gamma (C_1+C_2) |x|^{-\gamma},\] with $C_\gamma$ a constant only depending on $\gamma.$ In particular, $f\in W^{\gamma,p}$ whenever $\gamma\leq 1$ and $\gamma p<2.$
%\end{lemma}
%\begin{remark}
%This lemma readily extends to the case where $f$ is smooth away from finitely many points while $\nabla f$ may blow-up in a controlled way near those points. 
%\end{remark}

%\begin{proof}
%Let $x\in Q_2\setminus\{0\}.$ Then, \[\Lambda^\gamma(x)=\int_{|x-y|<\frac{1}{2}|x|}\frac{f(x)-f(y)}{|x-y|^{2+\gamma}}+\int_{|x-y|\geq \frac{1}{2}|x|}\frac{f(x)-f(y)}{|x-y|^{2+\gamma}}.\]
%To handle the first integral note that all points in between $x$ and $y$ are outside of $B_{\frac{1}{2}|x|}(0)$, thus we have: \[\int_{|x-y|<\frac{1}{2}|x|}\frac{|f(x)-f(y)|}{|x-y|^{2+\gamma}}\leq C_{\gamma} C_2 |x|^{-1}|x|^{1-\gamma}=C_\gamma C_2|x|^{-\gamma}.\]
%For the second integral we write: 
%\[\Big|\int_{|x-y|\geq \frac{1}{2}|x|}\frac{f(x)-f(y)}{|x-y|^{2+\gamma}}\Big|\leq 2C_1\int_{\frac{1}{2}|x|}^\infty \frac{1}{r^{1+\gamma}}dr=C_\gamma C_1|x|^{-\gamma}.\] 
%\end{proof}
 
%For the sake of convenience, w
As above, we will again use the notation $x=(x_1,x_2)\in \bbR^2$ in the next two results.
 
\begin{lemma}\label{fractional}
Let $P_2:=(-1,1)^2$ and $\Gamma:=[\{0\}\times(-1,1)]\cup[(-1,1)\times\{0\}]$.  If $f\in C^1_{\rm loc}(P_2\setminus\Gamma)$ and there are $K\geq 0$ and $\beta\in[0,1)$ such that 
\[
|f(x_1,x_2)|\leq K|x_1x_2|^{-\beta},\]
\[|\nabla f(x_1,x_2)|\leq K|x_1x_2|^{-\beta-1}
\]  
for all $(x_1,x_2)\in P_2$, then for any $\gamma\in(0,1)$ and $\epsilon>0$ there is $C_{\gamma,\beta,\epsilon}>0$ such that 
\[|\Lambda^\gamma f(x_1,x_2)|\leq C_{\gamma,\beta,\epsilon}K |x_1x_2|^{-\beta-\gamma-\epsilon}
\]  for all $(x_1,x_2)\in P_2$.
\end{lemma}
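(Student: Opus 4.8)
The plan is to fix $x=(x_1,x_2)$ and estimate $\Lambda^\gamma f(x)=\int_{P_2}\frac{f(x)-f(z)}{|x-z|^{2+\gamma}}\,dz$ by a dyadic decomposition of the $z$-domain centered at $x$. First, some reductions: if $x\in\Gamma$ the right-hand side of the asserted bound is $+\infty$, so assume $x_1x_2\neq0$; the hypotheses, the domain $P_2$, the set $\Gamma$, and (after an obvious change of variables) $\Lambda^\gamma$ are all invariant under the sign flips $z_i\mapsto-z_i$ and the swap $z_1\leftrightarrow z_2$, so we may assume $0<a:=x_1\le x_2=:b<1$, whence $d:=\operatorname{dist}(x,\Gamma)=a$. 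For $x\notin\Gamma$ the integral is absolutely convergent ($f\in C^1$ near $x$, and $|f(z)|\le K|z_1z_2|^{-\beta}\in L^1(P_2)$ since $\beta<1$). I then write $\Lambda^\gamma f(x)=\mathrm I+\mathrm{II}_a+\mathrm{II}_b$, where $\mathrm I$ is the integral over $B_{d/2}(x)\cap P_2$ (the ``near'' region, where $f$ is $C^1$ with a controlled gradient because every such $z$ has $|z_1|\ge a/2$ and $|z_2|\ge b/2$), $\mathrm{II}_a:=|f(x)|\int_{P_2\setminus B_{d/2}(x)}|x-z|^{-2-\gamma}dz$, and $\mathrm{II}_b:=\int_{P_2\setminus B_{d/2}(x)}|f(z)|\,|x-z|^{-2-\gamma}dz$, using $|f(x)-f(z)|\le|f(x)|+|f(z)|$ on the far region.

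The piece $\mathrm{II}_a$ is immediate: $|f(x)|\le K(ab)^{-\beta}$ and $\int_{|x-z|\ge d/2}|x-z|^{-2-\gamma}dz\le C_\gamma d^{-\gamma}=C_\gamma a^{-\gamma}$, which gives $\mathrm{II}_a\le C_\gamma K(ab)^{-\beta-\gamma}$ after $b^{-\beta}\le b^{-\beta-\gamma}$. The piece $\mathrm{II}_b$ is the heart of the matter: I decompose $\{z\in P_2:|x-z|\ge d/2\}$ into dyadic annuli $A_j:=\{2^{j-1}d\le|x-z|<2^jd\}$, $j\ge0$ (only $j\le C\log(1/d)$ occur). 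On $A_j$ one has $|x-z|^{-2-\gamma}\le C_\gamma\rho_j^{-2-\gamma}$ with $\rho_j:=2^jd$, and — the crucial point — the product structure $|z_1z_2|^{-\beta}=|z_1|^{-\beta}|z_2|^{-\beta}$ together with $A_j\subseteq B_{\rho_j}(x)$ reduces the mass of $A_j$ to a one-dimensional computation:
\[
\int_{A_j}|z_1z_2|^{-\beta}dz\le\Big(\int_{|z_1-a|<\rho_j}|z_1|^{-\beta}dz_1\Big)\Big(\int_{|z_2-b|<\rho_j}|z_2|^{-\beta}dz_2\Big).
\]
Since $\rho_j\ge d=a$, the first factor is always $\le C_\beta\rho_j^{1-\beta}$; the second is $\le C_\beta b^{-\beta}\rho_j$ while $\rho_j\lesssim b$ and $\le C_\beta\rho_j^{1-\beta}$ once $\rho_j\gtrsim b$. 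Hence the $A_j$-contribution to $\mathrm{II}_b$ is $\lesssim K\rho_j^{-\gamma-\beta}b^{-\beta}$ in the first scale range and $\lesssim K\rho_j^{-\gamma-2\beta}$ in the second; both exponents of $\rho_j$ are negative because $\gamma>0$, so the two resulting geometric sums are dominated by their terms at $\rho_j\sim a$ and $\rho_j\sim b$ respectively, giving $\mathrm{II}_b\lesssim K\big(a^{-\gamma-\beta}b^{-\beta}+b^{-\gamma-2\beta}\big)\le C_{\beta,\gamma}K(ab)^{-\beta-\gamma}$ (the last inequality uses $a\le b<1$).

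The near region $\mathrm I$ needs more care, and this is the step I expect to be the main obstacle. Bounding $|f(x)-f(z)|$ by $(\sup_{B_{d/2}(x)}|\nabla f|)\,|x-z|\lesssim K(ab)^{-\beta-1}|x-z|$ on all of $B_{d/2}(x)$ and integrating yields only $\mathrm I\lesssim K(ab)^{-\beta-1}d^{1-\gamma}=Ka^{-\beta-\gamma}b^{-\beta-1}$, which is too lossy when $x$ is near the corner of $\Gamma$ — there the hypothesized gradient bound badly overestimates the oscillation of $f$. Instead I split the near region itself at radius $r:=\min\{ab,d/2\}$: for $|x-z|<r$ use the gradient bound, and for $r\le|x-z|<d/2$ use $|f(x)-f(z)|\le|f(x)|+|f(z)|\lesssim K(ab)^{-\beta}$. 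This gives
\[
\mathrm I\lesssim K(ab)^{-\beta-1}r^{1-\gamma}+K(ab)^{-\beta}r^{-\gamma}\le C_{\beta,\gamma}K(ab)^{-\beta-\gamma},
\]
the choice $r\approx ab$ balancing the two terms (and when $ab>d/2$, i.e. $b>\tfrac12$, the stray factors of $b^{-1}$ are harmless). Adding $\mathrm I,\mathrm{II}_a,\mathrm{II}_b$ yields $|\Lambda^\gamma f(x)|\le C_{\beta,\gamma}K(ab)^{-\beta-\gamma}\le C_{\beta,\gamma,\epsilon}K|x_1x_2|^{-\beta-\gamma-\epsilon}$, the last step since $|x_1x_2|<1$; in fact $\epsilon=0$ is admissible and the $\epsilon$ in the statement is only convenient slack (e.g.\ it absorbs a logarithm if one estimates the outer annuli crudely by $\int_{P_2}|z_1z_2|^{-\beta}dz$). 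The one genuine bookkeeping subtlety throughout is tracking which dyadic scales ``see'' one, both, or neither of the two axes making up $\Gamma$, so that the weight $|z_1z_2|^{-\beta}$ is not overcounted; the displayed product-structure bound is precisely what renders this routine.
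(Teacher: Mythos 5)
Your proof is correct, and it takes a genuinely different route from the paper's in the two places that matter.

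In the near region $|x-z|\lesssim d$, the paper handles the tension between the gradient bound (sharp at tiny separations but lossy when $x$ is close to the corner of $\Gamma$) and the $L^\infty$ bound by the interpolation trick
$|f(x)-f(y)|=|f(x)-f(y)|^{1-\gamma-\epsilon}\,|f(x)-f(y)|^{\gamma+\epsilon}\le CK(x_1x_2)^{-\beta-\gamma-\epsilon}|x-y|^{\gamma+\epsilon}$,
which makes the near integral converge at the cost of the extra $\epsilon$ in the exponent. You instead split the near region once more at the scale $r\approx x_1x_2$ where the two bounds balance, using the gradient bound inside $B_r(x)$ and the $L^\infty$ bound on the annulus $B_{d/2}(x)\setminus B_r(x)$. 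This is equally rigorous, is arguably more elementary, and yields the sharper conclusion with $\epsilon=0$, confirming your remark that the $\epsilon$ in the statement is only slack (the paper's argument genuinely needs it, yours does not). In the far region, the paper splits at $|x-y|\sim x_2$ and closes the outer piece with a H\"older-inequality estimate in the auxiliary exponent $\delta$; you instead sum over dyadic annuli $A_j$ using the product factorization $|z_1z_2|^{-\beta}\le|z_1|^{-\beta}|z_2|^{-\beta}$ on $A_j\subseteq\{|z_1-a|<\rho_j\}\times\{|z_2-b|<\rho_j\}$. The two scale regimes you identify ($\rho_j\lesssim b$ and $\rho_j\gtrsim b$) play exactly the role of the paper's $I$ and $II$, and the negative geometric ratios make the sums collapse to their leading terms without any auxiliary exponent. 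The overall skeleton (reduce by symmetry to $0<x_1\le x_2$, split at the distance to $\Gamma$) is the same, but both the near-region and far-region closings are different, and yours removes the $\epsilon$-loss.
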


This lemma can be trivially extended to the case where $f$ is smooth away from finitely many lines, with a controlled blow-up near those lines. In particular, we have the following corollary.

\begin{corollary}
\label{fractionalsines}
If $f\in C^1_{\rm loc}(P_2\setminus\Gamma)$ and there are $K\geq 0$ and $\beta\in[0,1)$ such that 
\[
|f(x_1,x_2)|\leq K|\sin(\pi x_1)\sin(\pi x_2)|^{-\beta},\]
\[|\nabla f(x_1,x_2)|\leq K|\sin(\pi x_1)\sin(\pi x_2)|^{-\beta-1}
\]  
for all $(x_1,x_2)\in P_2$, then for any $\gamma\in(0,1)$ and $\epsilon>0$ there is $C_{\gamma,\beta,\epsilon}>0$ such that
\[|\Lambda^\gamma f(x_1,x_2)|\leq C_{\gamma,\beta,\epsilon}K |\sin(\pi x_1)\sin(\pi x_2)|^{-\beta-\gamma-\epsilon}
\]  
for all $(x_1,x_2)\in P_2$.
\end{corollary}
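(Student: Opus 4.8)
The plan is to deduce Corollary~\ref{fractionalsines} from Lemma~\ref{fractional} by a localization argument, using that $|\sin(\pi t)|$ is comparable to the distance from $t$ to $\bbZ$.

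First I would record the elementary two‑sided bound $2\,\mathrm{dist}(t,\bbZ)\le|\sin(\pi t)|\le\pi\,\mathrm{dist}(t,\bbZ)$ for all $t\in\bbR$ (the lower bound by concavity of $\sin$ on $[0,\tfrac\pi2]$ together with periodicity, the upper bound from $|\sin u|\le|u|$). Hence the hypotheses give $|f(x)|\le K(\mathrm{dist}(x_1,\bbZ)\mathrm{dist}(x_2,\bbZ))^{-\be}$ and $|\n f(x)|\le K(\mathrm{dist}(x_1,\bbZ)\mathrm{dist}(x_2,\bbZ))^{-\be-1}$ on $P_2$, and since $\mathrm{dist}(x_i,\bbZ)\ge\tfrac1\pi|\sin(\pi x_i)|$ it suffices to prove
\[
|\Lambda^\ga f(x)|\le C_{\ga,\be,\eps}\,K\,(\mathrm{dist}(x_1,\bbZ)\mathrm{dist}(x_2,\bbZ))^{-\be-\ga-\eps},
\]
which converts back to the stated bound at the cost of a factor $\pi^{2(\be+\ga+\eps)}$. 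Because $|\sin(\pi x_1)\sin(\pi x_2)|\le1$ on $P_2$, one may also assume $\eps\in(0,1)$, larger $\eps$ only weakening the claim.

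Next I would localize. On $\overline{P_2}=[-1,1]^2$ the quantity $\mathrm{dist}(x_1,\bbZ)\mathrm{dist}(x_2,\bbZ)$ vanishes exactly on the grid $\mathcal G:=(\{-1,0,1\}\times[-1,1])\cup([-1,1]\times\{-1,0,1\})$. Cover $\overline{P_2}$ by finitely many open squares $R_1,\dots,R_N$ of side length $<\tfrac12$, each meeting at most one vertical and at most one horizontal line of $\mathcal G$, and fix a subordinate smooth partition of unity $\{\chi_i\}$ with $\sum_i\chi_i\equiv1$ on $\overline{P_2}$, $\operatorname{supp}\chi_i$ compactly contained in $R_i$, and $\|\n\chi_i\|_\infty\le C$. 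By linearity of $\Lambda^\ga$ it is enough to bound each $\Lambda^\ga(\chi_i f)(x)$. For a fixed $i$, let $(a_i,b_i)\in\{-1,0,1\}^2$ be the coordinates of the (at most one of each orientation) grid lines met by $R_i$, with the convention $a_i:=0$ (resp.\ $b_i:=0$) if $R_i$ meets no vertical (resp.\ horizontal) grid line, and set $g_i(y):=(\chi_i f)(y+(a_i,b_i))$, extended by zero off $R_i-(a_i,b_i)$. Then $g_i$ is supported in a square of side $<\tfrac12$ near the origin, lies in $C^1_{\mathrm{loc}}(P_2\setminus\Gamma)$, and — using $\mathrm{dist}(\,\cdot\,{+}a_i,\bbZ)=|\cdot|$ near $0$, the product rule $\n g_i=\chi_i\n f+f\n\chi_i$, and boundedness of the support — satisfies $|g_i(y)|\le C'K|y_1y_2|^{-\be}$ and $|\n g_i(y)|\le C'K|y_1y_2|^{-\be-1}$ on $P_2$ (if $R_i$ misses a vertical grid line then $|y_1|$ is bounded below on $\operatorname{supp}g_i$, so that blow‑up is simply absent, which is still a special case of the hypotheses). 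Lemma~\ref{fractional} then gives $|\Lambda^\ga g_i(y)|\le C_{\ga,\be,\eps}K|y_1y_2|^{-\be-\ga-\eps}$. Undoing the translation, $\Lambda^\ga(\chi_i f)(x)$ differs from $\Lambda^\ga g_i(x-(a_i,b_i))$ only by the part of the defining integral over $y'\notin P_2$; since $g_i$ is supported well inside $P_2$ this correction is easily bounded by $C_{\ga,\be,\eps}K(\mathrm{dist}(x_1,\bbZ)\mathrm{dist}(x_2,\bbZ))^{-\be-\ga-\eps}$ as well. Summing the $N$ contributions and invoking the reduction of the first step completes the proof.

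The argument is entirely elementary; the only real work is the bookkeeping in the localization step — checking that each cut‑off and translated piece $g_i$, once extended by zero across the coordinate axes, genuinely meets the structural hypotheses of Lemma~\ref{fractional}, and that the pieces with $x\notin R_i$ together with the part of the $\Lambda^\ga$ integral lying outside $P_2$ contribute only terms that are absorbed into the final bound. What makes this work is precisely the comparison $|\sin(\pi t)|\asymp\mathrm{dist}(t,\bbZ)$: it turns the blow‑up allowed by the corollary's hypotheses near $\partial P_2$ into an ordinary coordinate‑axis blow‑up after a suitable integer translation, so that Lemma~\ref{fractional}, stated only for singularities along the axes, becomes applicable to every piece.
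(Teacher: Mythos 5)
The paper does not actually give a proof of Corollary~\ref{fractionalsines}: it simply asserts that Lemma~\ref{fractional} ``can be trivially extended to the case where $f$ is smooth away from finitely many lines, with a controlled blow-up near those lines.'' Your argument supplies the details that the paper leaves implicit, and it does so in what is surely the intended way: replace $|\sin(\pi t)|$ by $\mathrm{dist}(t,\bbZ)$, localize with a partition of unity so that each piece sees at most one vertical and one horizontal singular line, translate so that those lines become the coordinate axes, extend by zero, and then invoke Lemma~\ref{fractional} for each piece. The verification that each $g_i$ satisfies the hypotheses of the Lemma (including the degenerate case where $R_i$ misses one of the grid lines, so the corresponding blow-up is absent), and that the product-rule term $f\nabla\chi_i$ is harmless because $|y_1y_2|^{-\beta}\le|y_1y_2|^{-\beta-1}$ on $P_2$, is handled correctly.

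One small spot deserves a slightly more careful statement than you give it. You write that $\Lambda^\gamma(\chi_i f)(x)$ ``differs from $\Lambda^\gamma g_i(x-(a_i,b_i))$ only by the part of the defining integral over $y'\notin P_2$,'' but when $x$ lies far from $R_i$ (in a different quadrant of the grid), the translated point $z=x-(a_i,b_i)$ may fall outside $P_2$, and then $\Lambda^\gamma g_i(z)$ is not covered by Lemma~\ref{fractional} at all. That regime is of course trivial: for $x\notin R_i$ one has $(\chi_i f)(x)=0$ and $\mathrm{dist}(x,\operatorname{supp}\chi_i)\ge c>0$ since $\operatorname{supp}\chi_i$ is compactly contained in $R_i$, so $|\Lambda^\gamma(\chi_i f)(x)|\le c^{-2-\gamma}\int_{P_2}|\chi_i f|\le CK$ (using $\beta<1$ for integrability), which is dominated by the right-hand side because $\mathrm{dist}(x_j,\bbZ)\le\tfrac12$. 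And for $x\in R_i$, the correction integral over the symmetric difference $[P_2-(a_i,b_i)]\,\triangle\,P_2$ is bounded by $C|g_i(z)|\,|z_1|^{-\gamma}$ or $C|g_i(z)|\,|z_2|^{-\gamma}$ (that set lies at distance $\gtrsim|z_1|$ or $\gtrsim|z_2|$ from $z$ and $g_i$ vanishes on it), which is again of the right order $|z_1z_2|^{-\beta-\gamma}$. You flag this bookkeeping at the end of your proof; it does all go through, and with these two cases spelled out your argument is complete.
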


\begin{proof}[Proof of Lemma \ref{fractional}]
%Let $x=(x_1,x_2)\in {\rm int}(Q_2)$ and let $\epsilon>0$ be given. 
Without loss of generality, assume that $0<x_1\leq x_2$. Then (with all integrals below having $y\in P_2$), 
\[\Lambda^\gamma f(x)=\int_{|x-y|<\frac{1}{2}x_1}\frac{f(x)-f(y)}{|x-y|^{2+\gamma}}dy+\int_{|x-y|\geq \frac{1}{2}x_1}\frac{f(x)-f(y)}{|x-y|^{2+\gamma}}dy.\]
To estimate the first integral, note that all $y=(y_1,y_2)$ satisfying $|x-y|<\frac{1}{2} x_1$ also satisfy $y_1\geq \frac{1}{2} x_1$ and $y_2\geq \frac{1}{2} x_2$. 
Thus, if $|x-y|<\frac{1}{2}x_1,$ we have
\[|f(x)-f(y)|=|f(x)-f(y)|^{1-\gamma-\epsilon}|f(x)-f(y)|^{\gamma+\epsilon}\leq 2K (x_1x_2)^{-\beta-\gamma-\epsilon}|x-y|^{\gamma+\epsilon},\]
where in the last step we used the two assumed inequalities to estimate the two powers of $|f(x)-f(y)|$.
We then get
\[\int_{|x-y|<\frac{1}{2}x_1}\frac{|f(x)-f(y)|}{|x-y|^{2+\gamma}}dy \leq 2K(x_1x_2)^{-\beta-\gamma-\epsilon}\int_0^{x_1} r^{-1+\epsilon}dr \le \frac{2K}{\epsilon}(x_1x_2)^{-\beta-\gamma-\frac{\epsilon}{2}}.\] 

It therefore suffices to consider the second integral. 
We split it into
\[
%\int_{|x-y|\geq \frac{1}{2}x_1}\frac{|f(x)-f(y)|}{|x-y|^{2+\gamma}} dy=
\int_{\frac{1}{2}x_1 \le |x-y| <\frac{1}{2}x_2} \frac{|f(x)-f(y)|}{|x-y|^{2+\gamma}} dy+\int_{|x-y|\geq\frac{1}{2}x_2}\frac{|f(x)-f(y)|}{|x-y|^{2+\gamma}} dy=I+II.
\]
First we estimate $II$ via
\[
|II|\leq C_\gamma K(x_1x_2)^{-\beta}\int_{\frac 12 x_2}^3 r^{-1-\gamma}dr+K\int_{|x-y|\geq \frac{1}{2}x_2}\frac{|y_1y_2|^{-\beta}}{|x-y|^{2+\gamma}}dy.
%\leq C_\gamma K(x_1x_2)^{-\beta}x_2^{-\gamma}+\int_{|x_2-y_2|\geq \frac{1}{2}x_2}\frac{(y_1y_2)^{-\beta}}{|x-y|^{2+\gamma}}dy.
\]
The first term is no more than $C_\gamma K(x_1x_2)^{-\beta}x_2^{-\gamma}\le C_\gamma K(x_1x_2)^{-\beta-\gamma}$ (with a new constant) and for the second we use H\"older's inequality to obtain 
\begin{align*}
\int_{|x-y|\geq \frac{1}{2}x_2}\frac{|y_1y_2|^{-\beta}}{|x-y|^{2+\gamma}}dy  
& \leq \left(\int_{P_2} |y_1y_2|^{-(1-\delta)} dy \right)^{\frac{\beta}{1-\delta}} \left(\int_{|x-y|\geq \frac{1}{2}x_2} |x-y|^{-\frac{2+\gamma}{1-\frac{\beta}{1-\delta}}} dy \right)^{1-\frac{\beta}{1-\delta}}
\\ & \leq C_\delta x_2^{-(2+\gamma)+2(1-\frac{\beta}{1-\delta})}=C_\delta x_2^{-\gamma-\frac{2\beta}{1-\delta}},
\end{align*}
for any $\delta\in(0,1-\beta).$ We choose such $\delta$ so that $\frac{2\beta}{1-\delta}<2\beta+\gamma$, and then $x_1\leq x_2$ yields 
\[|II|\leq C_{\gamma,\beta}K(x_1x_2)^{-\beta-\gamma}.\]
We similarly estimate
\[
|I|\leq C_\gamma K(x_1x_2)^{-\beta}\int_{\frac 12 x_1}^{\frac 12 x_2} r^{-1-\gamma}dr+ C_\beta K x_2 ^{-\beta} \int_{\frac{1}{2}x_1 \le |x-y| <\frac{1}{2}x_2} \frac{|y_1|^{-\beta}}{|x-y|^{2+\gamma}}dy.
\] 
The first term is no more than $C_\gamma K(x_1x_2)^{-\beta}x_1^{-\gamma}\le C_\gamma K(x_1x_2)^{-\beta-\gamma}$ (with a new constant) and the second is no more than
%\[
%\int_{|x-y|\geq \frac{1}{2}x_1\,\&\, |x_2-y_2|<\frac{1}{2}x_2}\frac{|f(y)|}{|x-y|^{2+\gamma}} dy \leq C_{\beta}Kx_2^{-\beta}\int_{|x-y|\geq \frac{1}{2}x_1}\frac{y_1^{-\beta}}{|x-y|^{2+\gamma}}dy
%\]
\[
C_{\beta}K x_2^{-\beta}\int_{|y_1|<\frac{1}{2}x_1}\frac{|y_1|^{-\beta}}{|x-y|^{2+\gamma}}dy+C_{\beta,\gamma} K(x_{1}x_2)^{-\beta} \int_{\frac 12 x_1}^{\frac 12 x_2} r^{-1-\gamma}dr.
\] 
The second of these terms is again estimated by $C_{\beta,\gamma} K(x_1x_2)^{-\beta}x_1^{-\gamma}\le C_{\beta,\gamma} K(x_1x_2)^{-\beta-\gamma}$, and the first by 
\[
%C_{\beta}K x_2^{-\beta}\int_{y_1<\frac{1}{4}x_1}\frac{y_1^{-\beta}}{|x-y|^{2+\gamma}}dy\leq 
C_{\beta,\gamma}K x_2^{-\beta}\int_{|y_1|<\frac{1}{2}x_1}\frac{|y_1|^{-\beta}}{(x_1^2+(x_2-y_2)^2)^{1+\frac{1}{2}\gamma}}dy\le
 C_{\beta,\gamma}K x_2^{-\beta}x_1^{1-\beta}\int_{0}^\infty\frac{dz}{(x_1^2+z^2)^{1+\frac{1}{2}\gamma}}.
 \]
This is no more than $C_{\beta,\gamma} Kx_2^{-\beta}x_1^{1-\beta} x_1^{-1-\gamma}\leq C_{\beta,\gamma}K(x_1x_2)^{-\beta-\gamma}$, concluding the proof. 
\end{proof}

\begin{lemma} \label{L.3.7}
If $\psi_\alpha$ is the function from Section \ref{S3},
%When $\psi_\alpha:(0,1)^2\rightarrow \mathbb{R}$ is given by
%\[
%\psi_\alpha(x,y)= 2^\alpha\frac{\sin(\pi x)\sin(\pi y)}{(\sin(\pi x)+\sin(\pi y))^{\alpha}},
%\] 
then $\log\psi_\alpha$ is concave for any $\alpha\in [0,1]$. 
\end{lemma}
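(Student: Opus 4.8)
The plan is to verify directly that the Hessian of $g:=\log\psi_\alpha$ is negative semidefinite on the open square $(0,1)^2$, where $\psi_\alpha$ is smooth and strictly positive; concavity there is all that is needed, since the statement only concerns the level-set foliation in the interior. Writing $g=\alpha\log 2+\log\sin(\pi x)+\log\sin(\pi y)-\alpha\log(\sin(\pi x)+\sin(\pi y))$, I would introduce the abbreviations $a:=\sin(\pi x)$, $b:=\sin(\pi y)$, $p:=\pi\cot(\pi x)$, $q:=\pi\cot(\pi y)$, $s:=a+b$ and $\theta:=a/s\in(0,1)$, together with the elementary identities $a_x=ap$, $b_y=bq$, $\partial_x p=-(\pi^2+p^2)$, $\partial_y q=-(\pi^2+q^2)$, $\partial_x\theta=p\,\theta(1-\theta)$ and $\partial_y\theta=-q\,\theta(1-\theta)$.

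First I would compute $\partial_x g=p(1-\alpha\theta)$ and differentiate once more, obtaining
\[
\partial_{xx}g=-\pi^2(1-\alpha\theta)-p^2(1-\alpha\theta^2),\qquad \partial_{yy}g=-\pi^2\big(1-\alpha(1-\theta)\big)-q^2\big(1-\alpha(1-\theta)^2\big),
\]
and for the mixed term $\partial_{xy}g=\alpha pq\,\theta(1-\theta)$. Since $\alpha\le 1$ and $\theta\in(0,1)$, each of the coefficients $1-\alpha\theta$, $1-\alpha\theta^2$, $1-\alpha(1-\theta)$, $1-\alpha(1-\theta)^2$ lies in $[0,1]$, so $\partial_{xx}g\le 0$ and $\partial_{yy}g\le 0$ are immediate.

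The only substantive point is nonnegativity of the determinant $D:=\partial_{xx}g\,\partial_{yy}g-(\partial_{xy}g)^2$. Expanding $\partial_{xx}g\,\partial_{yy}g$ into four manifestly nonnegative summands and discarding the three that involve a factor $\pi^2$, it suffices to check
\[
p^2q^2\Big[(1-\alpha\theta^2)\big(1-\alpha(1-\theta)^2\big)-\alpha^2\theta^2(1-\theta)^2\Big]\ge 0;
\]
expanding the first product cancels the $\alpha^2\theta^2(1-\theta)^2$ term and leaves $1-\alpha\big[\theta^2+(1-\theta)^2\big]=1-\alpha+2\alpha\theta(1-\theta)$, which is $\ge 0$ since $\alpha\in[0,1]$ and $\theta(1-\theta)\ge 0$. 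Hence $D\ge 0$, the Hessian is negative semidefinite, and $g=\log\psi_\alpha$ is concave.

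The computation is routine; the only step requiring a moment's thought is recognizing that the dangerous square $(\partial_{xy}g)^2$ is already dominated by the $p^2q^2$ part of $\partial_{xx}g\,\partial_{yy}g$ alone, so that no cross terms are needed, and that the leftover slack collapses to the clean quantity $1-\alpha+2\alpha\theta(1-\theta)$. There are no analytic obstacles, as every function in play is smooth on $(0,1)^2$.
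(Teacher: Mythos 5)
Your proof is correct and takes essentially the same route as the paper: both compute the Hessian of $\log\psi_\alpha$ directly and verify negative semidefiniteness by checking the diagonal entries are nonpositive and the determinant nonnegative. The only difference is bookkeeping — you parametrize via $\theta=a/(a+b)$ and the cotangent quantities $p,q$, which makes the determinant cancellation collapse cleanly to $1-\alpha+2\alpha\theta(1-\theta)$, whereas the paper works with $a=\sin x$, $b=\sin y$ and the explicit bounds $\partial_{xx}f_\alpha\le -b^2/((a+b)^2a^2)$, $(\partial_{xy}f_\alpha)^2\le (1-a^2)(1-b^2)/(a+b)^4$; your organization is arguably slightly more transparent, but it is the same underlying argument.
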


\begin{proof}
To simplify notation, let us instead consider the function $f_\alpha:=\log(2^{-\alpha}\psi_\alpha(\frac{\cdot}{\pi})),$ so that
\[f_\alpha(x,y)=\log(\sin(x))+\log(\sin(y))-\alpha\log( \sin(x)+\sin(y)).\]
We have
\begin{align*}
\partial_{xx}\log(\sin(x)) & 
%=\partial_x(\frac{\cos(x)}{\sin(x)})
=-\frac{1}{\sin^2(x)},
\\ \partial_{xx} \log(\sin(x)+\sin(y)) & 
%=\partial_x( \frac{\cos(x)}{\sin(x)+\sin(y)})
=-\frac{1+\sin(x)\sin(y)}{(\sin(x)+\sin(y))^2}
\\ \partial_{yy}\log(\sin(x)+\sin(y)) & =-\frac{1+\sin(x)\sin(y)}{(\sin(x)+\sin(y))^2}
\\ \partial_{xy}\log(\sin(x)+\sin(y)) & =-\frac{\cos(x)\cos(y)}{(\sin(x)+\sin(y))^2}
\end{align*}
Now with $a:=\sin(x)$ and $b:=\sin(y)$ we obtain
\[
\partial_{xx} f_\alpha=-\frac{1}{a^2}+\alpha\frac{1+ab}{(a+b)^2}\leq-\frac{1}{a^2}+\frac{1+ab}{(a+b)^2} \le -\frac{b^2}{(a+b)^2a^2}\leq 0
\]
%\[=\frac{-(a+b)^2+a^2+a^3b}{(a+b)^2a^2b^2}\leq 0\] 
because $a,b,\alpha\in[0,1]$. Symmetry yields $\partial_{yy}f_\alpha\le -\frac{a^2}{(a+b)^2b^2}\leq 0$, so
 \[\tr D^2 f_\alpha=\Delta f_\alpha \leq 0.\] 
We also have
\[
\det D^2 f_\alpha=\partial_{xx}f_\alpha\partial_{yy}f_\alpha- (\partial_{xy}f_\alpha)^2 \ge \frac{b^2}{(a+b)^2a^2} \frac{a^2}{(a+b)^2b^2} - \frac{(1-a^2)(1-b^2)}{(a+b)^4} \ge 0,
\]
%\[\geq(-\frac{1}{a^2}+\frac{1+ab}{(a+b)^2})(-\frac{1}{b^2}+\frac{1+ab}{(a+b)^2})-\frac{(1-a^2)(1-b^2)}{(a+b)^4}\]
%\[=\frac{((a+b)^2-a^2-a^3b)((a+b)^2-b^2-b^3a))}{a^2b^2(a+b)^4}-\frac{(1-a^2)(1-b^2)}{(a+b)^4}\]
%\[\geq \frac{(b^2+ab)(a^2+ab)}{a^2b^2(a+b)^4}-\frac{1}{(a+b)^4}\geq 0. \]
and concavity of $f_\alpha$ follows.
\end{proof}

\end{document}